\documentclass[12pt,a4paper]{article}
\usepackage{latexsym,amssymb,amsfonts,amsmath,amsthm,nccmath,enumitem,setspace,graphicx,float}
\usepackage[dvipsnames]{xcolor}
\usepackage[hidelinks]{hyperref}
\usepackage[margin=2cm]{geometry}

\newtheorem{theorem}{Theorem}[section]
\newtheorem{corollary}[theorem]{Corollary}
\newtheorem{lemma}[theorem]{Lemma}

\theoremstyle{definition}
\newtheorem{definition}[theorem]{Definition}

\newtheorem{example}[theorem]{Example}

\newcommand{\B}{\mathcal{B}}

\newcommand{\A}{\mathcal{A}}

\newcommand{\e}{\epsilon}
\renewcommand{\l}{\lambda}

\def \deg {{\rm deg}}
\def \codeg {{\rm codeg}}

\def \leq {\leqslant}
\def \geq {\geqslant}

\def \mod{\pmod}

\let\oldproofname=\proofname
\renewcommand{\proofname}{\rm\bf{\oldproofname}}

\title{Exact values for some unbalanced Zarankiewicz numbers}

\author{Guangzhou Chen\thanks{Henan Engineering Laboratory for Big Data Statistical Analysis and Optimal Control, School of Mathematics and Information Science, Henan Normal University, Xinxiang, 453007, PR China}\qquad
Daniel Horsley\thanks{School of Mathematics, Monash University, VIC 3800, Australia \quad \texttt{danhorsley@gmail.com}}\qquad
Adam Mammoliti\thanks{School of Mathematics, Monash University, VIC 3800, Australia}}
\date{}

\begin{document}
\maketitle
\setstretch{1.2}
\vspace{-5mm}
\begin{abstract}
For positive integers $s$, $t$, $m$ and $n$, the Zarankiewicz number $Z_{s,t}(m,n)$ is defined to be the maximum number of edges in a bipartite graph with parts of sizes $m$ and $n$ that has no complete bipartite subgraph containing $s$ vertices in the part of size $m$ and $t$ vertices in the part of size $n$. A simple argument shows that, for each $t \geq 2$, $Z_{2,t}(m,n)=(t-1)\binom{m}{2}+n$ when $n \geq (t-1)\binom{m}{2}$. Here, for large $m$, we determine the exact value of $Z_{2,t}(m,n)$ in almost all of the remaining cases where $n=\Theta(tm^2)$. We establish a new family of upper bounds on $Z_{2,t}(m,n)$ which complement a family already obtained by Roman. We then prove that the floor of the best of these bounds is almost always achieved. We also show that there are cases in which this floor cannot be achieved and others in which determining whether it is achieved is likely a very hard problem. Our results are proved by viewing the problem through the lens of linear hypergraphs and our constructions make use of existing results on edge decompositions of dense graphs.
\end{abstract}

\section{Introduction}\label{S:intro}

For positive integers $s$, $t$, $m$ and $n$, the Zarankiewicz problem \cite{Zar} asks for the maximum number of edges in a bipartite graph with parts $X$ and $Y$, where $|X|=m$ and $|Y|=n$, that has no complete bipartite subgraph containing $s$ vertices in $X$ and $t$ vertices in $Y$. This maximum is denoted here by $Z_{s,t}(m,n)$.

A simple argument of \v{C}ul\'{\i}k \cite{Cul} shows that, for each $t \geq 2$, $Z_{2,t}(m,n)=(t-1)\binom{m}{2}+n$ when $n \geq (t-1)\binom{m}{2}$. Our focus here is on determining, for large values of $m$, exact values for $Z_{2,t}(m,n)$ in the remaining cases where $n=\Theta(tm^2)$. We will show that, in the cases that concern us, $Z_{2,t}(m,n)$ usually attains the floor of one of two families of upper bounds. The first of these families is due to Roman \cite{Rom}, while the second is new. For positive integers $\l$, $m$ and $n$ we let
\begin{align*}
  A_\l^k(m,n) &= \tfrac{k+1}{2}n+\tfrac{\l}{k}\tbinom{m}{2} && \hbox{for each integer $k \geq 1$} \\[1mm]
  B_\l^k(m,n) &= \tfrac{\l(m-1)-\rho}{k-1}m+\tfrac{k+1}{k^2-1-\rho}\left(n\tbinom{k}{2}-\l \tbinom{m}{2}+\tfrac{\rho k m}{k+1} \right) && \hbox{for each integer $k \geq 2$}
\end{align*}
where $\rho$ is the least nonnegative integer such that $\l(m-1) \equiv \rho \mod{k-1}$.  The result of \cite{Cul} already mentioned states that $Z_{2,\l+1}(m,n)=A_\l^1(m,n)$ for all $n \geq \l\binom{m}{2}$. A specialisation of a result of Roman \cite{Rom} shows that $Z_{2,\l+1}(m,n) \leq A_\l^k(m,n)$ for all $k \geq 1$.  Here, we show that the functions $B^k_\l(m,n)$ are also upper bounds on $Z_{2,\l+1}(m,n)$.

\begin{theorem}\label{T:upperBounds}
For all positive integers $\l$, $m$ and $n$, we have $Z_{2,\l+1}(m,n) \leq B^{k}_{\l}(m,n)$ for each $k \geq 2$.
\end{theorem}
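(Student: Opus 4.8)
The plan is to work directly with the degree sequence. Let $G$ be a bipartite graph with parts $M$, $N$ of sizes $m$, $n$ and no $K_{2,\l+1}$ having its two vertices in $M$; write $d_v=\deg(v)$ for $v\in N$ and $r_x=\deg(x)$ for $x\in M$, so $|E(G)|=\sum_{v\in N}d_v=\sum_{x\in M}r_x$. The hypothesis is equivalent to $|N(x)\cap N(y)|\le\l$ for all distinct $x,y\in M$, and counting, in two ways, pairs of vertices of $M$ with a common neighbour yields the two families of inequalities I would use: the global \emph{codegree} bound $\sum_{v\in N}\binom{d_v}{2}\le\l\binom{m}{2}$, and, for every fixed $x\in M$, the \emph{local} bound $\sum_{v\in N(x)}(d_v-1)\le\l(m-1)$. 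Since $B^{k}_{\l}(m,n)$ is increasing in $n$ with every unit increment at least $\tfrac{k}{2}\ge1$, deleting a vertex of $N$ of degree at most $\tfrac{k}{2}$ and inducting on $n$ lets me assume in addition that $d_v\ge2$ for every $v$.

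I would then record the identity $\binom{d}{2}=(k-1)(d-1)-\binom{k-1}{2}+\binom{d-k+1}{2}$, valid for every integer $d$, where $\binom{j}{2}:=\tfrac12 j(j-1)\ge0$ vanishes exactly for $j\in\{0,1\}$, so that $\binom{d_v-k+1}{2}=0$ precisely when $d_v\in\{k-1,k\}$. Summing this over $v\in N$ and inserting the codegree bound gives
\[(k-1)\,|E(G)|+\sum_{v\in N}\binom{d_v-k+1}{2}\ \le\ \l\binom{m}{2}+\binom{k}{2}\,n,\]
so in particular $|E(G)|\le A^{k-1}_{\l}(m,n)$, which is Roman's bound for the parameter $k-1$. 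Writing $\l(m-1)=q(k-1)+\rho$ as in the statement, a direct check shows $A^{k-1}_{\l}(m,n)=B^{k}_{\l}(m,n)$ when $\rho=0$, settling that case, so from here I assume $\rho\ge1$; this is the interesting range, where $B^{k}_{\l}(m,n)$ can be strictly smaller than $A^{k-1}_{\l}(m,n)$ and the codegree bound alone does not suffice.

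The new input for $\rho\ge1$ is the local bounds, used with integrality. Using $d_v-1=(k-1)-(k-d_v)^{+}+(d_v-k)^{+}$, the local bound at $x$ rearranges to
\[\sum_{v\in N(x)}(k-d_v)^{+}\ \ge\ (k-1)(r_x-q)-\rho+\sum_{v\in N(x)}(d_v-k)^{+};\]
that is, a point lying in more than $q$ blocks must lie in blocks of size below $k$, and the deficit is at least $k-1$ per block past the $q$th (discounted by the fixed slack $\rho$, and offset by oversized blocks). Summing over $x\in M$ turns this into a lower bound for $\sum_{v\in N}d_v(k-d_v)^{+}$ in terms of $|E(G)|$, $q$, $m$, and $\sum_{v\in N}d_v(d_v-k)^{+}$; combining it with the displayed codegree inequality (which governs $\sum_v\binom{d_v-k+1}{2}$, hence the "spread" of the degree sequence away from the values $k-1$ and $k$), and choosing the multipliers so that equality is possible exactly when every $d_v\in\{k-1,k\}$ and every local bound is tight with $\sum_{v\in N(x)}(d_v-1)\equiv\rho\pmod{k-1}$, should give $|E(G)|\le B^{k}_{\l}(m,n)$.

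The crux, I expect, is this final optimisation. The obstacle is that the local bounds, if merely added over all $x$, only reproduce the codegree bound and hence Roman's $A^{k-1}_{\l}$; the improvement to $B^{k}_{\l}$ genuinely uses that $k-1$ does not divide $\l(m-1)$, so one must track how far the degree sequence is \emph{forced} to depart from the ideal all-$\{k-1,k\}$ profile and then show this costs precisely the gap between $A^{k-1}_{\l}$ and $B^{k}_{\l}$ — landing on $B^{k}_{\l}(m,n)$ rather than on a weaker intermediate bound, and controlling the interaction of the terms $(k-d_v)^{+}$, $(d_v-k)^{+}$ and $\binom{d_v-k+1}{2}$ for an arbitrary degree sequence (where the reduction to $d_v\ge2$, or a stronger minimum-degree reduction, is used), is the delicate part.
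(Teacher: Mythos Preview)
Your outline has the right architecture and is essentially the same route the paper takes: the two families of inequalities you name (the global codegree bound and the per-$x$ local bounds) are exactly the paper's constraints~(2) and the source of its constraint~(3), and the case split on $\rho=0$ versus $\rho\ge1$ matches the paper's. But the proposal stops at the point that actually requires work, and one detail of your heuristic is off.

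The step you have not done is converting the local bound into a \emph{summable linear} inequality. The paper's move (its Lemma~2.3) is this: write $q=\frac{\l(m-1)-\rho}{k-1}$ and $t(x)=\sum_{v\in N(x)}(k-d_v)^{+}$. If $r_x\le q$, then trivially $r_x\le q+\frac{t(x)}{k-1-\rho}$. If $r_x\ge q+1$, your rearranged local bound (after discarding the nonnegative term $\sum_{v\in N(x)}(d_v-k)^{+}$, which only helps) gives
\[
t(x)\ \ge\ (k-1)(r_x-q)-\rho\ \ge\ (k-1-\rho)(r_x-q),
\]
the last step using $r_x-q\ge1$; hence again $r_x\le q+\frac{t(x)}{k-1-\rho}$. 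Summing over $x$ now yields the paper's constraint~(3):
\[
|E(G)|\ \le\ qm+\frac{1}{k-1-\rho}\sum_{v:\,d_v<k} d_v(k-d_v).
\]
You kept the $(d_v-k)^{+}$ term and hoped to balance it later; in fact one should simply drop it here, and that is what makes the per-vertex inequality clean enough to sum.

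Second, the combination you gesture at is not left to guesswork in the paper: one takes $\tfrac{1}{k^2-1-\rho}\bigl(\tfrac{k(k^2-1)}{2}\cdot(1)+(k-1)\cdot(2)+(k-1-\rho)\cdot(3)\bigr)$, where $(1)$ is $\sum_v 1=n$, $(2)$ is the codegree bound, and $(3)$ is the displayed inequality, and then checks that the resulting coefficient of each $n_i$ (the number of $v$ with $d_v=i$) is at least $i$. Equality holds for $i\in\{k-1,k,k+1\}$, not only for $i\in\{k-1,k\}$ as you wrote; the optimal configurations genuinely use blocks of all three sizes. Your initial reduction to $d_v\ge2$ is valid but unnecessary once the argument is set up this way.
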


For cases where $n \geq m$ we state the following corollary which, with the appropriate choice of $k$, gives the best of these bounds in every case.

\begin{corollary}\label{C:upperBounds}
For positive integers $k$, $\l$, $m$ and $n$ with $k \geq 2$ and $m \geq \frac{k(k-1)}{\l}+1$, we have
\[Z_{2,\l+1}(m,n) \leq
\left\{
  \begin{array}{ll}
    B^{k+1}_{\l}(m,n) & \hbox{if $\l\binom{m}{2}/\binom{k+1}{2} \leq n < \left(\l\binom{m}{2}+\beta m\right)/\binom{k+1}{2}$} \\[1.5mm]
    A^k_{\l}(m,n) & \hbox{if $\left(\l\binom{m}{2}+\beta m\right)/\binom{k+1}{2} \leq n \leq \left(\l\binom{m}{2}-\frac{\alpha k }{k+1}m\right)/\binom{k}{2}$} \\[1.5mm]
    B^{k}_{\l}(m,n) & \hbox{if $\left(\l\binom{m}{2}-\frac{\alpha k }{k+1}m\right)/\binom{k}{2} < n \leq \l\binom{m}{2}/\binom{k}{2}$}
  \end{array}
\right.
\]
where $\alpha$ and $\beta$ are the least nonnegative integers such that $\l(m-1)-\alpha \equiv 0 \mod{k-1}$ and $\l(m-1)+\beta \equiv 0 \mod{k}$ respectively.
\end{corollary}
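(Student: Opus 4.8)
The plan is to note that the displayed inequality is essentially immediate from the bounds already in hand, and then to do the work that makes the three cases a faithful description of the optimal bound: to check that the three ranges partition the interval $[\l\binom{m}{2}/\binom{k+1}{2},\ \l\binom{m}{2}/\binom{k}{2}]$, and that the function picked out in each is the smallest of all the known bounds there. For the inequality itself: the specialisation of Roman's result quoted above gives $Z_{2,\l+1}(m,n)\leq A^j_\l(m,n)$ for every integer $j\geq1$, and Theorem~\ref{T:upperBounds} gives $Z_{2,\l+1}(m,n)\leq B^j_\l(m,n)$ for every integer $j\geq2$. Since $k\geq2$, each of $B^{k+1}_\l(m,n)$, $A^k_\l(m,n)$ and $B^k_\l(m,n)$ is therefore an upper bound on $Z_{2,\l+1}(m,n)$, which is all the displayed inequality asserts; the substance lies in the two supporting facts.

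The only nontrivial point for well-definedness is that the right endpoint $(\l\binom{m}{2}+\beta m)/\binom{k+1}{2}$ of the first range does not exceed the left endpoint $(\l\binom{m}{2}-\tfrac{\alpha k}{k+1}m)/\binom{k}{2}$ of the third (the other endpoint comparisons are trivial since $\alpha,\beta\geq0$, and consecutive ranges share an endpoint). Cross-multiplying and simplifying, using $\binom{k+1}{2}-\binom{k}{2}=k$ and $m(m-1)=2\binom{m}{2}$, this reduces to the single inequality $(k-1)\beta+k\alpha\leq\l(m-1)$. I would prove this by cases on $r:=\l(m-1)\bmod k$. If $r=0$ then $\beta=0$, so $(k-1)\beta+k\alpha=k\alpha\leq k(k-2)<k(k-1)\leq\l(m-1)$, the last step being exactly the hypothesis $m\geq\tfrac{k(k-1)}{\l}+1$. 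If $r\neq0$ then $\beta=k-r$; writing $\l(m-1)=kq+r$, the hypothesis forces $q\geq k-1$, hence $q+r\geq k$, and since $\l(m-1)\equiv q+r\pmod{k-1}$ with $0\leq\alpha\leq k-2$ we may write $q+r=(k-1)s+\alpha$ with $s\geq1$; then $(k-1)\beta+k\alpha=k(k-1)+kq+r-k(k-1)s=\l(m-1)-k(k-1)(s-1)\leq\l(m-1)$.

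To justify the description ``the best of these bounds'', I would argue as follows. Each $n\mapsto A^j_\l(m,n)$ is affine with slope $\tfrac{j+1}{2}$ (increasing in $j$) and intercept $\tfrac{\l}{j}\binom{m}{2}$ (decreasing in $j$), and $A^j_\l(m,n)=A^{j+1}_\l(m,n)$ exactly at $n=\l\binom{m}{2}/\binom{j+1}{2}$, so $A^k_\l(m,n)=\min_{j\geq1}A^j_\l(m,n)$ throughout our interval. The coefficient of $n$ in $B^{k+1}_\l(m,n)$ is $\tfrac{(k+2)\binom{k+1}{2}}{k(k+2)-r}$, which is $\geq\tfrac{k+1}{2}$ with equality precisely when $r=0$, and a direct computation (clearing denominators) verifies $B^{k+1}_\l(m,n)=A^k_\l(m,n)$ at $n^{\ast\ast}:=(\l\binom{m}{2}+\beta m)/\binom{k+1}{2}$ (with $B^{k+1}_\l\equiv A^k_\l$ when $r=0$, in which case the first range is empty); hence $B^{k+1}_\l\leq A^k_\l$ for $n\leq n^{\ast\ast}$ and $B^{k+1}_\l\geq A^k_\l$ for $n\geq n^{\ast\ast}$. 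Symmetrically, at $n^\ast:=(\l\binom{m}{2}-\tfrac{\alpha k}{k+1}m)/\binom{k}{2}$ the bracketed correction term of $B^k_\l$ vanishes, so both sides equal $\tfrac{\l(m-1)-\alpha}{k-1}m$, while the coefficient of $n$ in $B^k_\l(m,n)$ is $\tfrac{(k+1)\binom{k}{2}}{k^2-1-\alpha}<\tfrac{k+1}{2}$ because $\alpha\leq k-2$; hence $B^k_\l\geq A^k_\l$ for $n\leq n^\ast$ and $B^k_\l\leq A^k_\l$ for $n\geq n^\ast$. Combining these with $A^k_\l=\min_{j\geq1}A^j_\l$ shows the minimum is $B^{k+1}_\l$ on the first range, $A^k_\l$ on the middle range, and $B^k_\l$ on the third range, once the remaining $B^j_\l$ with $j\notin\{k,k+1\}$ are excluded: for such $j$ the only place $B^j_\l$ can drop below $\min\{A^{j-1}_\l,A^j_\l\}$ is a window of width $O(m)$ around $n=\l\binom{m}{2}/\binom{j}{2}$, and under the hypothesis this window is disjoint from our interval.

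I expect the inequality $(k-1)\beta+k\alpha\leq\l(m-1)$ to be the main obstacle: it is exactly where the threshold $m\geq\tfrac{k(k-1)}{\l}+1$ is needed (and the argument shows it to be sharp), and it requires handling the residues of $\l(m-1)$ modulo $k$ and modulo $k-1$ simultaneously. By contrast, verifying $B^{k+1}_\l(m,n)=A^k_\l(m,n)$ at $n^{\ast\ast}$ is a lengthy but purely mechanical identity, and the treatment of the remaining $B^j_\l$ is routine once their dip windows have been located, though it again uses the hypothesis to keep those windows clear of the interval of interest.
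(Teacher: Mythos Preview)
Your proposal is correct. Both you and the paper observe that the displayed inequality itself is immediate from Roman's bound and Theorem~\ref{T:upperBounds}, and that the real content lies in showing the three ranges partition the interval and that each stated bound is the minimum of all the $A^j_\l$ and $B^j_\l$ there. The one genuine methodological difference is in the disjointness step, i.e.\ the inequality $(k-1)\beta+k\alpha\leq\l(m-1)$: the paper observes via the Chinese remainder theorem that $\l(m-1)\equiv\alpha k+\beta(k-1)\pmod{k(k-1)}$ and then uses $\alpha<k-1$, $\beta<k$ and $\l(m-1)\geq k(k-1)$ to conclude in one line, whereas you do an explicit case split on $r=\l(m-1)\bmod k$ and track the residue mod $k-1$ by hand. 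Your argument is correct and shows the same sharpness in the hypothesis, but the CRT route is shorter and makes transparent why $k(k-1)$ is the right modulus. For the ``best bound'' discussion your treatment of the crossings of $B^{k}_\l$ and $B^{k+1}_\l$ with $A^k_\l$ matches the paper's, but your dismissal of the remaining $B^j_\l$ (``under the hypothesis this window is disjoint from our interval'') is a sketch where the paper does real work: for $j\geq k+2$ the paper explicitly verifies $(\l\binom{m}{2}+\beta_{j-1}m)/\binom{j}{2}\leq\l\binom{m}{2}/\binom{k+1}{2}$, and this needs the hypothesis together with some small-case checks (in particular the cases $(j,\beta_{j-1})=(k+2,k)$ and $(j,\beta_{j-1},k)=(5,3,2)$ require separately arguing that $\l(m-1)$ is large enough). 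Your outline would need these details filled in to be complete.
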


For a given $\l$ and $m$, the function of $n$ given by the least of Roman's bounds $A^k_{\l}(m,n)$ is piecewise linear with a critical point $(\l\binom{m}{2}/\binom{k}{2},\frac{2\l}{k-1}\binom{m}{2})$ for each integer $k \geq 2$. At any (integral) such critical point, it is known \cite{Rei2} that $Z_{2,\l+1}(m,n)$ achieves the bound if and only if there exists an \emph{$(m,k,\l)$-design}: a $k$-uniform hypergraph on $m$ vertices in which any pair of vertices occur together in exactly $\l$ edges. Corollary~\ref{C:upperBounds} shows that, for an interval around any critical value $n=\l\binom{m}{2}/\binom{k}{2}$ for which $\l(m-1) \not\equiv 0 \mod{k-1}$, our new bound $B^k_\l$ improves on this situation. To be more precise, it is an improvement when $\l\binom{m}{2}/\binom{k}{2}-\frac{2\rho m}{(k-1)(k+1)} < n < \l\binom{m}{2}/\binom{k}{2}+\frac{2(k-1-\rho) m}{k(k-1)}$, where $\rho$ is as defined in $B^k_\l$. At the critical value, it can improve the old bound by as much as $\frac{m}{4k+2}$ (this occurs when $\rho=\frac{k-1}{2}$). In the special case where $\l(m-1) \equiv 0 \mod{k-1}$ we have $B^{k}_{\lambda}=A^{k-1}_\l$ and our new bound makes no improvement. Corollary~\ref{C:upperBounds} results in a new piecewise linear function obtained from the previous one by ``truncating'' many of its critical points. See Figure~\ref{F:trunc} for a visualisation of this.

Guy \cite{Guy} proved that $Z_{2,2}(m,n) = \lfloor A^2_1(m,n) \rfloor$ when $\binom{m}{2}-2b \leq n \leq \binom{m}{2}$ where $b$ is the maximum number of edges in a $3$-uniform linear hypergraph on $m$ vertices. It is known that $b=\lfloor\frac{m}{3}\lfloor\frac{m-1}{2}\rfloor\rfloor$ if $m \not\equiv 5 \mod{6}$ and $b=\lfloor\frac{m}{3}\lfloor\frac{m-1}{2}\rfloor\rfloor-1$ if $m \equiv 5 \mod{6}$. For large $m$, we can greatly generalise this result by showing that the floor of the bound given by Corollary~\ref{C:upperBounds} is attained when $n=\Theta(\l m^2)$, except possibly for a limited set of values of $n$.

\begin{theorem}\label{T:boundAchieved}
Let $k \geq 2$ be a fixed integer. There is an $M=M(k)$ such that for each integer $m>M$ and all positive integers $\l$ and $n$, $Z_{2,\l+1}(m,n)$ is equal to
\[
\left\{
  \begin{array}{ll}
    \lfloor B^{k+1}_\l(m,n)\rfloor & \hbox{if $\l\binom{m}{2}/\binom{k+1}{2} \leq n \leq (\l\binom{m}{2}+\beta m)/\binom{k+1}{2}-4(k+2)(k+3)$} \\[1.5mm]
    \lfloor A^k_\l(m,n) \rfloor & \hbox{if $\left(\l\binom{m}{2}+\beta m\right)/\binom{k+1}{2}+4(\frac{k^2}{\l}+1) \leq n \leq \left(\l\binom{m}{2}-\frac{\alpha k}{k+1}m\right)/\binom{k}{2}-4(\frac{k(k-1)}{\l}+1)$} \\[1.5mm]
    \lfloor B^{k}_\l(m,n) \rfloor & \hbox{if $\left(\l\tbinom{m}{2}-\tfrac{\alpha k}{k+1} m\right)/\tbinom{k}{2}+4(k-2)(k-1) \leq n \leq \lambda\binom{m}{2}/\binom{k}{2}$}
  \end{array}
\right.
\]
where $\alpha$ and $\beta$ are the least nonnegative integers such that $\l(m-1)-\alpha \equiv 0 \mod{k-1}$ and $\l(m-1)+\beta \equiv 0 \mod{k}$ respectively.
\end{theorem}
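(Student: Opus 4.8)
The plan is to establish the three cases by explicit constructions; the matching upper bounds are immediate from Corollary~\ref{C:upperBounds} together with the integrality of $Z_{2,\l+1}(m,n)$. The first step is to recast the problem in terms of linear hypergraphs: a $K_{2,\l+1}$-free bipartite graph with parts $M$ and $N$ of sizes $m$ and $n$ is the same object as a list $(C_v:v\in N)$ of nonempty subsets of $M$ in which every pair of points of $M$ lies in at most $\l$ of the $C_v$, and this graph has $n+\sum_{v\in N}(|C_v|-1)$ edges. So we must place $n$ blocks on an $m$-set, every pair in at most $\l$ of them, so as to maximise $\sum_v(|C_v|-1)$, with singletons (size-one blocks) permitted. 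A short convexity argument shows that the natural linear relaxation of this packing problem has optimum $A^k_\l(m,n)$ throughout the range of $n$ covered by Corollary~\ref{C:upperBounds}, attained by a mixture of blocks of sizes $k$ and $k+1$: in the middle range this is essentially achievable, with $c_{k+1}:=\lfloor(\l\tbinom{m}{2}-\tbinom{k}{2}n)/k\rfloor$ blocks of size $k+1$ and $c_k:=n-c_{k+1}$ of size $k$ covering all but at most $k-1$ pairs and hence giving $\lfloor A^k_\l(m,n)\rfloor$ edges, while near the endpoints $n=\l\tbinom{m}{2}/\tbinom{k}{2}$ and $n=\l\tbinom{m}{2}/\tbinom{k+1}{2}$ a combinatorial obstruction --- the nonexistence of an $(m,k,\l)$-, respectively $(m,k+1,\l)$-design when $\l(m-1)\not\equiv0\pmod{k-1}$, respectively $\pmod k$ --- forces a smaller value, which turns out to be $\lfloor B^k_\l(m,n)\rfloor$, respectively $\lfloor B^{k+1}_\l(m,n)\rfloor$. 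This identifies exactly which configuration must be built in each case.

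In the middle range I would construct a $\{K_k,K_{k+1}\}$-packing of $\l K_m$ with exactly $c_{k+1}$ blocks of size $k+1$, $c_k$ of size $k$, and at most $k-1$ edges uncovered; then there are no singletons and the edge count is $kn+c_{k+1}=\lfloor A^k_\l(m,n)\rfloor$. Because $\gcd(k-1,k)=1$ there is no local degree obstruction to such a packing, and the global divisibility conditions can be arranged, for $m$ large, by first deleting from $\l K_m$ a sparse correction graph supported on a bounded number of vertices --- the congruences that must be fixed being those of $\l\tbinom{m}{2}$ and $\l(m-1)$ modulo small integers, essentially the quantities $\alpha$ and $\beta$. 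I would then apply a clique-decomposition theorem for dense multigraphs to the remainder, prescribing the number of $(k+1)$-blocks, and separately decompose the correction graph into a bounded number of $K_k$'s and $K_{k+1}$'s. The shifts $+4(\tfrac{k^2}{\l}+1)$ and $-4(\tfrac{k(k-1)}{\l}+1)$ at the endpoints of the interval are precisely the slack needed for $c_{k+1}$ and $c_k$ respectively to be large enough to host these bounded-size gadgets.

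In the two end ranges the linear relaxation still prefers blocks of sizes $k$ and $k+1$, but there is an additional combinatorial obstruction: a packing of $\l K_m$ by $K_k$ (near $n=\l\tbinom{m}{2}/\tbinom{k}{2}$), respectively by $K_{k+1}$ (near $n=\l\tbinom{m}{2}/\tbinom{k+1}{2}$), is forced to leave behind a near-regular graph $L$ of a prescribed degree --- governed by $\rho=\l(m-1)\bmod(k-1)$ via $\alpha$, respectively by $\l(m-1)\bmod k$ via $\beta$ --- which contains too few large cliques to be re-covered efficiently within the remaining block budget. The target configuration is therefore the clique-packing part (contributing the first term $\tfrac{\l(m-1)-\rho}{k-1}m$ of $B^k_\l$) together with an optimal covering of part of $L$ by small cliques (contributing the second term, which is a linear trade-off between what is re-covered and what is left uncovered). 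I would realise this by applying the clique-decomposition theorem to $\l K_m$ minus $L$ minus a correction graph; building $L$ explicitly as a near-regular graph with the required number of edges and degree congruences; re-covering as large a sub-multigraph of $L$ as the remaining budget permits using cliques of bounded size; and padding to $n$ blocks with singletons. Counting, and simplifying using the defining congruences for $\alpha$ and $\beta$, then collapses the total to $\lfloor B^{k+1}_\l(m,n)\rfloor$, respectively $\lfloor B^k_\l(m,n)\rfloor$. (In the degenerate subcase $\l(m-1)\equiv0\pmod{k-1}$ one has $B^k_\l=A^{k-1}_\l$, and the $B^k_\l$ range reduces to the middle case; likewise at the other endpoint.)

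The main obstacle is the exact bookkeeping. Since $A^k_\l$ and $B^k_\l$ are rationals with denominators as large as $\tbinom{k}{2}$ and $k^2-1-\rho$, the construction has to round in precisely the right direction and cannot afford to lose a single edge; this pins down the number of vertices, the number of edges, and the whole degree sequence of $L$ and of every correction gadget, and these arithmetic requirements must be made simultaneously consistent with the divisibility hypotheses of the clique-decomposition theorem applied to the dense remainder. Verifying that this remainder still has the enormous minimum degree such theorems demand is routine once $m$ is large, since $L$ and the correction graph together have only $O_k(m)$ edges; but arranging $L$ and the gadgets so that every congruence balances --- uniformly in $\l$, across the sporadic residue cases (including the degenerate ones with $\l(m-1)\equiv0$), and with the balance points landing exactly at the interval endpoints stated --- is the delicate heart of the argument, and is what forces the constant $M(k)$.
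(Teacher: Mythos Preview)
Your plan for the middle range is essentially the paper's: build a $\{K_k,K_{k+1}\}$-packing of $\l K_m$ with the prescribed block counts and a defect of at most $k-1$ edges, via a dense-decomposition theorem plus local corrections (this is Lemmas~\ref{L:middleRange} and~\ref{L:aBoundAchieved}).

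For the end ranges, however, there is a genuine structural gap. In the $B^k$ case you propose to decompose $\l K_m-L$ into $K_k$'s with $L$ an $\alpha$-near-regular leave, then cover part of $L$ by ``small cliques'' and pad to $n$ with singletons. But Lemma~\ref{L:bBoundWithEq} shows that the optimum of $\B^k_\l(m,n)$ is attained only with $n_i=0$ for $i\notin\{k-1,k,k+1\}$ and, crucially, with $n^*_{k+1}=\Theta(m)$ throughout the interval whenever $\alpha\in\{1,\dots,k-2\}$. Your leave $L$ has maximum degree $\alpha\le k-2$ and so contains no copy of $K_k$, let alone $K_{k+1}$; hence no configuration of the form ``$K_k$'s outside $L$, bounded cliques inside $L$, singletons'' can supply the required $(k+1)$-edges, and such a hypergraph falls strictly below $\lfloor B^k_\l(m,n)\rfloor$. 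Concretely, the full $K_k$-decomposition of $\l K_m-L$ already uses $\tfrac{m(\l(m-1)-\alpha)}{k(k-1)}$ blocks, which \emph{exceeds} $n$ whenever $n\binom{k}{2}<\l\binom{m}{2}-\tfrac{\alpha m}{2}$, a nonempty portion of the $B^k$ range; there the ``remaining budget'' is negative and the picture you describe is inverted. The same objection applies, with $k$ replaced by $k+1$, to your plan for the $B^{k+1}$ range. What the paper actually does (Lemma~\ref{L:bBoundAchieved}) is use edges of \emph{three} sizes $k-1,k,k+1$ arranged via a vertex partition: about $m^*_1=\tfrac{k-1}{k-1-\alpha}n^*_{k-1}$ vertices each lie in $k-1-\alpha$ edges of size $k-1$ and none of size $k+1$, the remaining vertices each lie in about $\alpha$ edges of size $k+1$ and none of size $k-1$, and the bulk is filled with $K_k$'s. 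The $(k+1)$-edges absorb the residual $\alpha$ units of codegree at the second group without costing extra blocks, and the $(k-1)$-edges let each vertex of the first group carry one extra unit of degree; this surplus $m_1$ is exactly the second term of $B^k_\l$. The missing idea is therefore not bookkeeping but the need for blocks one size \emph{larger} than the dominant one, which neither singletons nor cliques confined to a sparse leave can replace.
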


Note that $\l$ is arbitrary in Theorem~\ref{T:boundAchieved}: it need not be fixed as $m$ becomes large. For a given $\l$ and $m$, where $m$ is large, and for $n=\Theta(\l m^2)$, Theorem~\ref{T:boundAchieved} establishes that the floor of the bound given by Corollary~\ref{C:upperBounds} is always attained except possibly for in short intervals (of length at most $2k^2+O(k)$) around the transition points between different bounds. Thus, for any positive constant $\epsilon$ there is a constant $c=c(\epsilon)$ such that, for each pair $(\l,m)$ with $m$ sufficiently large, Theorem~\ref{T:boundAchieved} determines the value of $Z_{2,\l+1}(m,n)$ for all $n \geq \epsilon \l m^2$ except for at most $c$ values of $n$.

\begin{figure}[h]
\includegraphics[width=\textwidth]{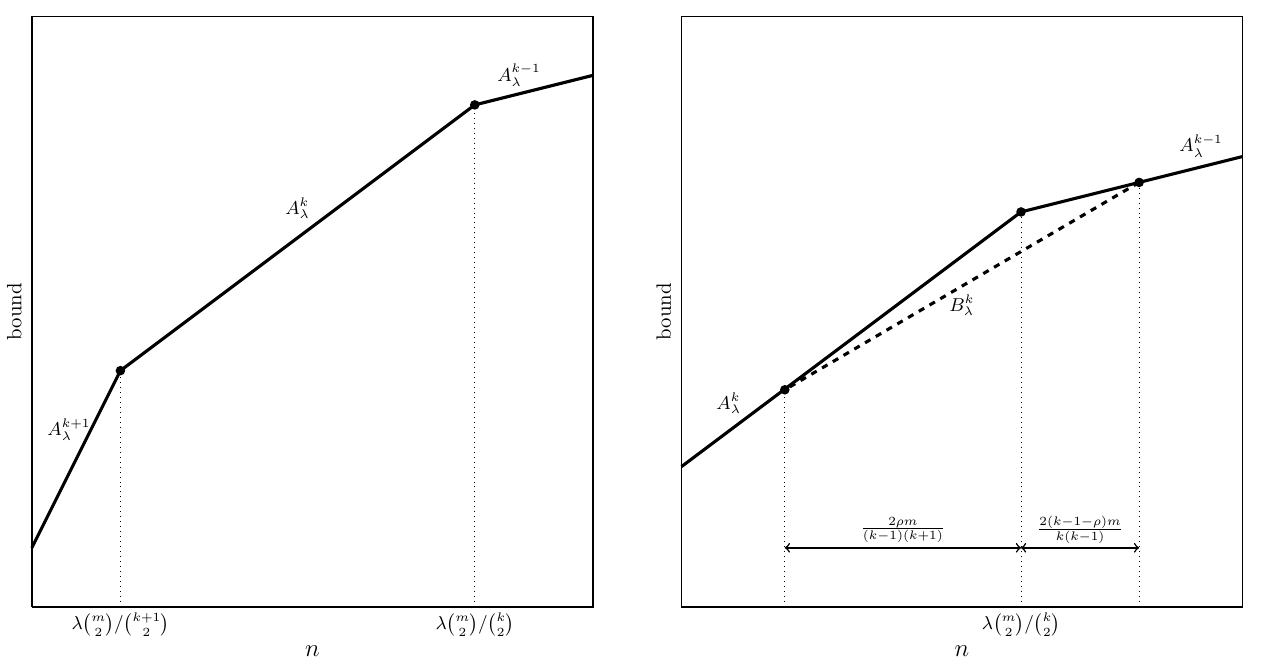}
\caption{On the left is a portion of the piecewise linear function given by the best of Roman's bounds (note that the change in gradients has been exaggerated for clarity). On the right we zoom in around $n=\lambda\binom{m}{2}/\binom{k}{2}$ and see how the bound $B^{k}_{\lambda}$ ``truncates'' the critical point if $\l(m-1) \not\equiv 0 \mod{k-1}$. When $m$ is large compared to $k$, Theorem~\ref{T:boundAchieved} tells us that the floor of these bounds is achieved except possibly in short intervals around the new critical points. When $\l(m-1) \equiv 0 \mod{k-1}$, $B^{k}_{\lambda}=A^{k-1}_\l$ and no improvement is obtained. In such a case Theorem~\ref{T:boundAchieved} tells us that the floor of Roman's bounds is achieved except possibly in a short interval around $n=\lambda\binom{m}{2}/\binom{k}{2}$.}\label{F:trunc}
\end{figure}

One simple consequence of Theorem~\ref{T:boundAchieved} is the following corollary which states that the best of Roman's bounds on $Z_{2,\l+1}(m,n)$ is asymptotically correct when $n=\Theta(\l m^2)$.

\begin{corollary}\label{C:asymptotic}
Let $a>0$ be a fixed constant and let $k$ be the least positive integer such that $a>1/\binom{k+1}{2}$. For $n \sim a \l \binom{m}{2}$ and any positive integer $\lambda$, we have $Z_{2,\l+1}(m,n) \sim \frac{\l}{k}(a\binom{k+1}{2}+1)\binom{m}{2}$ as $m \rightarrow \infty$.
\end{corollary}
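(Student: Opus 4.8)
The plan is to sandwich $Z_{2,\l+1}(m,n)$ between Roman's upper bound and the lower bound supplied by Theorem~\ref{T:boundAchieved}, exploiting the trivial fact that $Z_{2,\l+1}(m,\cdot)$ is non-decreasing in its second argument (one may always append isolated vertices to the part of size $n$). The upper half of the asymptotic is immediate: the elementary identity
\[
A^k_\l(m,n)=\tfrac{k+1}{2}n+\tfrac{\l}{k}\tbinom{m}{2}\sim\tfrac{\l}{k}\bigl(a\tbinom{k+1}{2}+1\bigr)\tbinom{m}{2}
\]
holds for any sequence with $n\sim a\l\binom{m}{2}$ — it is just the target expression rewritten — and $Z_{2,\l+1}(m,n)\leq A^k_\l(m,n)$ is Roman's bound, valid for every $k\geq1$. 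So only the matching lower bound is at issue.

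First I would note that the choice of $k$ forces $\tfrac{1}{\binom{k+1}{2}}<a\leq\tfrac{1}{\binom{k}{2}}$, reading $1/\binom{1}{2}$ as $+\infty$. The case $k=1$ is then precisely $a>1$ and is disposed of directly by \v{C}ul\'{\i}k's equality $Z_{2,\l+1}(m,n)=n+\l\binom{m}{2}$, which applies once $m$ is large enough that $n\geq\l\binom{m}{2}$; so assume $k\geq2$. Fix a small $\e>0$ with $a-\e>\tfrac{1}{\binom{k+1}{2}}$; then automatically $a-\e<\tfrac{1}{\binom{k}{2}}$, so $a-\e$ lies strictly inside the interval governing the middle ($A^k_\l$) case of Theorem~\ref{T:boundAchieved}. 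Put $n^{*}=n^{*}(m)=\lfloor(a-\e)\l\binom{m}{2}\rfloor$.

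The crux is to check that $n^{*}$ falls into that middle interval for all sufficiently large $m$, uniformly in $\l$. On dividing the two endpoints of the $A^k_\l$ interval of Theorem~\ref{T:boundAchieved} by $\l\binom{m}{2}$, they converge to $\tfrac{1}{\binom{k+1}{2}}$ and $\tfrac{1}{\binom{k}{2}}$ respectively; the key observation is that this convergence is uniform over all $\l\geq1$, because the perturbations $\beta m$ and $\tfrac{\alpha k}{k+1}m$ are $O_k(m)$, the additive corrections $4(\tfrac{k^2}{\l}+1)$ and $4(\tfrac{k(k-1)}{\l}+1)$ are $O_k(1)$, and $\l\binom{m}{2}=\Omega(m^2)$ swamps both. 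Since $a-\e$ is a fixed number strictly between the two limits, for every $m$ beyond a threshold depending only on $k$ and $\e$ we have $n^{*}$ inside the interval, so Theorem~\ref{T:boundAchieved} gives $Z_{2,\l+1}(m,n^{*})=\lfloor A^k_\l(m,n^{*})\rfloor\geq A^k_\l(m,n^{*})-1$.

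Finally, since $n\sim a\l\binom{m}{2}$ we have $n\geq n^{*}$ for $m$ large, so monotonicity yields $Z_{2,\l+1}(m,n)\geq Z_{2,\l+1}(m,n^{*})\geq A^k_\l(m,n^{*})-1\sim\tfrac{\l}{k}\bigl((a-\e)\tbinom{k+1}{2}+1\bigr)\tbinom{m}{2}$. Dividing by $\tfrac{\l}{k}\binom{m}{2}$ and taking a limit inferior gives $\liminf_{m\to\infty}\tfrac{Z_{2,\l+1}(m,n)}{(\l/k)\binom{m}{2}}\geq(a-\e)\binom{k+1}{2}+1$ for every admissible $\e$; letting $\e\to0$ and combining with the upper estimate of the first paragraph completes the proof. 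I do not anticipate a genuine obstacle: the only delicate point is the uniformity-in-$\l$ check of the preceding paragraph, which is needed precisely because $\l$ is permitted to grow with $m$. As a variant, when $a<\tfrac{1}{\binom{k}{2}}$ strictly the $\e$-perturbation can be skipped entirely — $n$ itself then lies in the middle interval for large $m$ — and one needs monotonicity only to handle the single boundary value $a=\tfrac{1}{\binom{k}{2}}$.
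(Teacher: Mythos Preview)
Your proof is correct and follows essentially the same route as the paper's: Roman's bound $A^k_\l$ for the upper estimate, \v{C}ul\'{\i}k for $k=1$, Theorem~\ref{T:boundAchieved} for the matching lower bound in the interior $1/\binom{k+1}{2}<a<1/\binom{k}{2}$, and monotonicity of $Z_{2,\l+1}(m,\cdot)$ to handle the boundary value $a=1/\binom{k}{2}$. The paper organises this exactly as in your final ``variant'' remark --- it applies Theorem~\ref{T:boundAchieved} directly to $n$ when $a$ is strictly interior and invokes monotonicity only at the boundary --- whereas your main argument wraps everything in the uniform $\e$-perturbation; but the ingredients are identical, and your explicit check that the middle interval endpoints converge uniformly over $\l\geq1$ is a point the paper leaves implicit.
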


The lengths of the intervals for which Theorem~\ref{T:boundAchieved} does not determine $Z_{2,\l+1}(m,n)$ are merely artefacts of our techniques. However we show that, in these intervals, there are cases in which the bound of Corollary~\ref{C:upperBounds} is not attained, and others in which determining whether the bound is attained is likely a hard problem. For positive integers $k \geq 2$ and $\l$, a \emph{symmetric design with block size $k$ and index $\l$} is an $(h,k,\l)$-design where $h= \frac{k(k-1)}{\l}+1$. A symmetric design with block size $k$ and index $1$ is a \emph{projective plane of order $k-1$}.

\begin{theorem}\label{T:boundNotAchieved}
Let $k$, $\l$, $m$ and $n$ be positive integers such that $k \geq 2$ and $\l\binom{m}{2} - n\binom{k}{2} \equiv 0 \mod{k}$. Let $h=\frac{k(k-1)}{\l}+1$.
\begin{itemize}
    \item[\textup{(i)}]
If $\l(m-1) \equiv 0 \mod{k-1}$ and $\l\binom{m}{2}/\binom{k}{2}-\frac{2}{k+1}\max\{h,k+1\} < n < \l\binom{m}{2}/\binom{k}{2}$, then $Z_{2,\l+1}(m,n) < \lfloor A^k_\l(m,n) \rfloor$.
    \item[\textup{(ii)}]
If $\l(m-1) \equiv 0 \mod{k}$ and $\l\binom{m}{2}/\binom{k+1}{2} < n < \l\binom{m}{2}/\binom{k+1}{2} + \frac{2}{k+1}\max\{h,k\}$, then $Z_{2,\l+1}(m,n) < \lfloor A^k_\l(m,n) \rfloor$.
    \item[\textup{(iii)}]
If $\l(m-1) \equiv 0 \mod{k}$ and $n = \l\binom{m}{2}/\binom{k+1}{2}+\frac{2h}{k+1}$, then $Z_{2,\l+1}(m,n)= \lfloor A^k_\l(m,n) \rfloor$ only if there exists a symmetric design with block size $k$ and index $\l$. Further, there is an $M=M(k)$ such that if $m>M$ the `only if' can be replaced with `if and only if'.
\end{itemize}
\end{theorem}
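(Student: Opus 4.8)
The plan is to pass to the linear-hypergraph picture the paper sets up — a bipartite graph on parts $M$, $N$ of sizes $m$, $n$ with no $K_{2,\l+1}$ having its $2$-part in $M$ is a list $B_1,\dots,B_n$ of subsets of $M$ (a \emph{configuration}) in which each pair of points of $M$ lies in at most $\l$ of the $B_i$, with edge count equal to the weight $\sum_i|B_i|$, so that $Z_{2,\l+1}(m,n)$ is the maximum weight — and then to exploit a convexity fact. Set $a=(\l\binom{m}{2}-n\binom{k}{2})/k$, so $A^k_\l(m,n)=kn+a$ and $a\binom{k+1}{2}+(n-a)\binom{k}{2}=\l\binom{m}{2}$. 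The first thing I would record is: since $\binom{x+1}{2}-\binom{x}{2}=x$ is strictly increasing, among all expressions of the integer $kn+a$ as a sum of $n$ nonnegative integers $b_1,\dots,b_n$ the quantity $\sum_i\binom{b_i}{2}$ is minimised exactly when every $b_i\in\{k,k+1\}$ (so that $a$ of them equal $k+1$), with minimum value $\l\binom{m}{2}$; since every configuration obeys $\sum_i\binom{|B_i|}{2}\le\l\binom{m}{2}$, it follows that whenever $0\le a\le n$ a configuration of weight $A^k_\l(m,n)$ must consist of exactly $a$ blocks of size $k+1$ and $n-a$ of size $k$ and must cover every pair of $M$ exactly $\l$ times. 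Counting incidences through a point $x$ in two ways, $\l(m-1)=\sum_{B\ni x}(|B|-1)=k\,d_x+(k-1)\,e_x$, where $d_x$, $e_x$ count the blocks of sizes $k+1$, $k$ through $x$.

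For parts (i) and (ii) I would argue by contradiction. In both, the congruence on $\l\binom{m}{2}-n\binom{k}{2}$ makes $a$, hence $A^k_\l(m,n)=kn+a$, an integer, so $Z_{2,\l+1}(m,n)=\lfloor A^k_\l(m,n)\rfloor$ gives a configuration of weight exactly $A^k_\l(m,n)$; and Roman's bounds with parameters $k-1$ and $k+1$ let me assume $\l\binom{m}{2}/\binom{k+1}{2}\le n\le\l\binom{m}{2}/\binom{k}{2}$ (otherwise $A^{k-1}_\l(m,n)$ or $A^{k+1}_\l(m,n)$ is already below $A^k_\l(m,n)$, forcing $Z_{2,\l+1}(m,n)<\lfloor A^k_\l(m,n)\rfloor$ outright), so $0\le a\le n$ and the structure above applies. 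In (i), reducing $k\,d_x+(k-1)\,e_x=\l(m-1)$ modulo $k-1$ and using $\l(m-1)\equiv0$ gives $d_x\equiv0\pmod{k-1}$; since $n<\l\binom{m}{2}/\binom{k}{2}$ we have $a\ge1$, so some block $B$ has size $k+1$, and every $x\in B$ satisfies $k-1\le d_x\le a$, whence $a\ge k-1$. With $Q=\{x:d_x\ge1\}$, every block of size $k+1$ lies in $Q$, so $(k-1)|Q|\le\sum_xd_x=a(k+1)$, while $a\binom{k+1}{2}\le\l\binom{|Q|}{2}$ because each pair inside $Q$ lies in at most $\l$ blocks of size $k+1$; combining $|Q|\le a(k+1)/(k-1)$ with $|Q|(|Q|-1)\ge ak(k+1)/\l$ and $k(k-1)=\l(h-1)$ yields $a\ge h(k-1)/(k+1)$, hence $a\ge\tfrac{k-1}{k+1}\max\{h,k+1\}$ — contradicting the bound $a<\tfrac{k-1}{k+1}\max\{h,k+1\}$ that the lower bound on $n$ gives. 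Part (ii) is the mirror image: $\l(m-1)\equiv0\pmod{k}$ forces $e_x\equiv0\pmod{k}$; $n>\l\binom{m}{2}/\binom{k+1}{2}$ gives $n-a\ge1$, so a block of size $k$ exists and $n-a\ge k$; taking $P=\{x:e_x\ge1\}$ gives $k|P|\le(n-a)k$ and $(n-a)\binom{k}{2}\le\l\binom{|P|}{2}$, whence $|P|\ge h$ and $n-a\ge\max\{h,k\}$, against the bound $n-a<\max\{h,k\}$ from the upper bound on $n$.

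For part (iii) I would run the same machinery. A short computation gives $n-a=h$ for the stated $n$, so $Z_{2,\l+1}(m,n)=\lfloor A^k_\l(m,n)\rfloor$ would, as in (ii), produce $h$ blocks of size $k$ that (since $e_x\equiv0\pmod{k}$) lie on a set $P$ with $|P|\le h$, while $h\binom{k}{2}\le\l\binom{|P|}{2}$ and $k(k-1)=\l(h-1)$ force $|P|\ge h$; so $|P|=h$, that inequality is tight, and the $h$ size-$k$ blocks cover every pair inside $P$ exactly $\l$ times — an $(h,k,\l)$-design, which is symmetric as $h=\tfrac{k(k-1)}{\l}+1$. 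That proves the `only if'. For the converse when $m$ is large I would place a symmetric $(h,k,\l)$-design on an $h$-subset $H$ of $M$; it then suffices to cover every pair meeting $M\setminus H$ exactly $\l$ times using $a$ blocks of size $k+1$, i.e.\ to decompose into copies of $K_{k+1}$ the multigraph $\l K_m$ with all edges inside $H$ deleted. That graph has $\l\binom{m}{2}-\l\binom{h}{2}=\l\binom{m}{2}-h\binom{k}{2}=a\binom{k+1}{2}$ edges and every vertex has degree $\l(m-1)$ or $\l(m-1)-k(k-1)$, both divisible by $k$ by hypothesis; and for fixed $k$ only finitely many pairs $(\l,h)$ are admissible (as $h-1$ must divide $k(k-1)$), so the clique-decomposition results for dense graphs used in Theorem~\ref{T:boundAchieved} supply such a decomposition once $m>M(k)$ for a suitable $M(k)$, and appending its $a=n-h$ cliques to the design yields a configuration of weight $A^k_\l(m,n)$.

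I expect the main obstacle to be this last step: I must feed the dense clique-decomposition result a graph carrying a ``hole'' of size $h$ and edge-multiplicity $\l$, and I must verify that the threshold it returns can be taken uniformly across the (finitely many) relevant pairs $(\l,h)$, so that one $M(k)$ works for all $\l$. Everything else — the forward implications and parts (i) and (ii) — is routine double counting once the convexity structure statement of the first paragraph is in hand; that statement is elementary but is the hinge of the whole proof, so I would take care to state and prove it cleanly at the outset.
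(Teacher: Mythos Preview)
Your proposal is correct and follows essentially the same approach as the paper: both reduce to the structural claim that an extremal configuration must consist of exactly $n_{k+1}=a$ blocks of size $k+1$ and $n_k=n-a$ of size $k$ with empty $\l$-defect (you via a direct convexity argument, the paper via its Lemmas~\ref{L:obeyLP} and~\ref{L:aBoundWithEq}), then use the vertex identity $(k-1)n_k(x)+k\,n_{k+1}(x)=\l(m-1)$ to force the congruences on $n_{k+1}(x)$ or $n_k(x)$, count to get $n_{k+1}\geq\tfrac{k-1}{k+1}\max\{h,k+1\}$ or $n_k\geq\max\{h,k\}$, and in (iii) identify the $k$-blocks as a symmetric design and build the converse by a dense $K_{k+1}$-decomposition of $\l K_m$ with an $h$-hole. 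The only cosmetic differences are that you obtain $|Q|\geq h$ via the pair-count $a\binom{k+1}{2}\leq\l\binom{|Q|}{2}$ where the paper uses a neighbourhood argument, and you secure a uniform $M(k)$ by noting finitely many admissible $\l$ where the paper invokes its arbitrary-$\l$ decomposition lemma (Lemma~\ref{T:denseFDecompsArbLambda}).
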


If $h=\frac{k(k-1)}{\l}+1$ is an integer and $m \equiv h \mod{k(k+1)}$, then $\l\binom{m}{2}/\binom{k+1}{2}+\frac{2h}{k+1}$ is an integer and Theorem~\ref{T:boundNotAchieved}(iii) applies. So for each pair $(k,\l)$ such that $h$ is an integer, there are infinitely many pairs $(m,n)$ such that determining whether $Z_{2,\l+1}(m,n)= \lfloor A^k_\l(m,n) \rfloor$ is equivalent to determining whether there exists a symmetric design with block size $k$ and index $\l$. In particular, for each $k \geq 2$, there are infinitely many pairs $(m,n)$ such that determining whether $Z_{2,2}(m,n)= \lfloor A^k_1(m,n) \rfloor$ is equivalent to determining whether there exists a projective plane of order $k-1$. It is well known that, for each $k \geq 2$, $Z_{2,2}(k^2-k+1,k^2-k+1)=k(k^2-k+1)$ if and only if there exists a projective plane of order $k-1$, but it is interesting to see that similar behaviour arises even in the case we consider here of $n=\Theta(\l m^2)$.

There is an extensive history of work on the Zarankiewicz problem, so we only touch here on the results that are most pertinent to our work. Famously, the K\"{o}vari-S\'{o}s-Tur\'{a}n theorem states that $Z_{s,t}(m,n)<(t-1)^{1/s}m n^{1-1/s}+(s-1)n$. This was proved in the case when $s=t$ and $m=n$ in \cite{KovSosTur} but it was observed in \cite{Hyl} that a similar proof gives the more general result. It was also shown in \cite{KovSosTur} that $Z_{2,2}(n,n) \sim n^{3/2}$ as $n \rightarrow \infty$.  Hylt\'{e}n-Cavallius \cite{Hyl} showed that $Z_{2,\lambda+1}(m,n) \leq \frac{1}{2}n+\frac{1}{2}(n^2+4\lambda nm(m-1))^{1/2}$. Reiman \cite{Rei,Rei2} also proved this and observed that equality holds if and only if an $(m,k,\lambda)$-design exists where $k$ is an integer such that $\binom{k}{2}=\lambda\binom{m}{2}/n$. Note that this bound agrees with Roman's bounds at the critical values $n=\l\binom{m}{2}/\binom{k}{2}$ but is inferior to them elsewhere. M\"{o}rs \cite{Mor} showed that, for any fixed $t \geq 2$, $Z_{2,t}(n,n) \sim \sqrt{t-1}n^{3/2}$ as $n \rightarrow \infty$. Alon, R\'{o}nyai and Szab\'{o} \cite{AloRonSza} have proved that, for fixed $s \geq 2$ and $t > s!$, $Z_{s,t}(m,n)=\Theta(mn^{1-1/s})$ if $m^{s/(s+1)} \leq n \leq m^s$ as $m \rightarrow \infty$. Alon, Mellinger, Mubayi and Verstra\"{e}te \cite{AloMelMubVer} showed that $Z_{2,t}(m,n) \sim m\sqrt{n}$ for $t \geq 4$ and $n \sim m^{2/t}$ as $n \rightarrow \infty$. Recently, Conlon \cite{Con} has shown that, for fixed $s$ and $t$ with $2 \leq s \leq t$ and any $m \leq n^{t^{1/(s-1)}/s(s-1)}$, $Z_{s,t}(m,n)=\Omega(mn^{1-1/s})$.

As well as the results of \v{C}ul\'{\i}k \cite{Cul}, Guy \cite{Guy} and Roman \cite{Rom} already discussed, there has been some more recent work that determines exact values for Zarankiewicz numbers. Goddard, Henning and Oellermann \cite{GodHenOel} and Collins, Riasanovsky, Wallace and Radziszowski \cite{ColRiaWalRad} determined exact values for various small parameters. Dam\'{a}sdi, H\'{e}ger and Sz\H{o}nyi \cite{DamHegSzo} found a number of interesting exact values and bounds on $Z_{2,2}(m,n)$ relating to finite projective planes and other designs.

The rest of the paper is arranged as follows. In Section~\ref{S:upperBounds} we establish the new upper bounds and prove Theorem~\ref{T:upperBounds} and Corollary~\ref{C:upperBounds}. Section~\ref{S:prelimsForLBs} details some preliminary results we require for our proof of Theorem~\ref{T:boundAchieved} which then takes place in Sections~\ref{S:achievingA} and~\ref{S:achievingB}. Section~\ref{S:achievingA} is devoted to the cases where $\lfloor A^k_\l(m,n) \rfloor$ is achieved, while Section~\ref{S:achievingB} is devoted to the cases where $\lfloor B^{k+1}_\l(m,n)\rfloor$ or $\lfloor B^{k}_\l(m,n)\rfloor$ is achieved and concludes by completing the proofs of Theorem~\ref{T:boundAchieved} and Corollary~\ref{C:asymptotic}. Theorem~\ref{T:boundNotAchieved} is proved in Section~\ref{S:failing}. We conclude with some discussion of our work and possible future directions in Section~\ref{S:conc}.

\section{Preliminaries and upper bounds}\label{S:upperBounds}

For our purposes, a hypergraph $H$ consists of a set $V(H)$ of vertices and a multiset $E(H)$ of edges, where each edge is a nonempty subset of $V(H)$. We say $H$ is \emph{simple} if $E(H)$ is a set and that $H$ is \emph{$k$-uniform} if every element of $E(H)$ has size $k$. For a vertex $x$ of $H$, $\deg_H(x)$ denotes the number of edges of $H$ that contain $x$ and we define the \emph{total degree} of $H$ to be $\sum_{x \in V(H)}\deg_H(x)$. We also adopt the convention that $\deg_H(x)=0$ for any vertex not in $V(H)$. The minimum and maximum degree of $H$ are denoted by $\delta(H)$ and $\Delta(H)$ respectively. For distinct vertices $x$ and $y$ of $H$, $\codeg_H(x,y)$ denotes the number of edges of $H$ that contain both $x$ and $y$. For a positive integer $\l$, we say $H$ is \emph{$\l$-linear} if $\codeg_H(x,y) \leq \l$ for all distinct $x,y \in V(H)$. The usual definition of linearity is recovered by setting $\l=1$. A \emph{multigraph} is a $2$-uniform hypergraph and a \emph{graph} is a simple multigraph. For a set of $m$ vertices $V$ and a positive integer $\l$, the multigraph $G$ with vertex set $V$ such that $\codeg_{G}(x,y)=\l$ for all distinct $x,y \in V$ is denoted by $\l K_V$ or, if we merely wish to specify its order, by $\l K_{m}$. We omit the $\lambda$ when $\l=1$.

Given a hypergraph $H$ with $m$ vertices and $n$ edges, we can form its incidence graph: a bipartite graph with parts $X$ and $Y$ of sizes $m$ and $n$ whose vertices correspond to the vertices and edges of $H$ and in which two vertices are adjacent if and only if they correspond to an edge of $H$ and a vertex of $H$ in that edge. Similarly, given any bipartite graph with parts $X$ and $Y$ of sizes $m$ and $n$, we can form a corresponding hypergraph with $m$ vertices and $n$ edges. In each case, the hypergraph is $\l$-linear if and only if the bipartite graph contains no complete bipartite subgraph with 2 vertices in $X$ and $\l+1$ vertices in $Y$. Thus we have a well known alternative definition of $Z_{2,\l+1}(m,n)$ as the maximum total degree of a $\l$-linear hypergraph with $m$ vertices and $n$ edges. This is the lens through which we will view the problem for the remainder of the paper.

\begin{definition}
We define $\alpha$ to be the element of $\{0,\ldots,k-2\}$ such that $\l(m-1)-\alpha \equiv 0 \mod{k-1}$ and $\beta$ to be the element of $\{0,\ldots,k-1\}$ such that $\l(m-1)+\beta \equiv 0 \mod{k}$, where the values of $\l$, $m$ and $k$ will always be clear from context.
\end{definition}

We will use $\alpha$ and $\beta$ as given by this definition throughout the rest of the paper. Note that this means that $\rho=\alpha$ in our definition of $B_\l^k(m,n)$, a fact we will tacitly take for granted. (We used $\rho$ rather than $\alpha$ in the definition of $B_\l^k(m,n)$ to avoid confusion in Theorems~\ref{T:upperBounds} and \ref{T:boundAchieved} between the value of $\rho$ in $B_\l^{k+1}(m,n)$ and the value of $\alpha$ in those theorems.)

The bounds $A_\l^k(m,n)$ are the optimal values of a linear program whose variables represent the numbers of edges of each possible size in a $\l$-linear hypergraph of order $m$, whose objective function gives the total degree, and whose constraints specify that the total number of edges must be $n$ and the total codegree must be at most $\l \binom{m}{2}$. By adding to this linear program an additional constraint reflecting the fact that the total codegree at each vertex of the hypergraph must be at most $\l(m-1)$, we obtain a family of linear programs whose optimal values are the bounds $B_\l^k(m,n)$. We formalise these linear programs as follows.

\begin{definition}
For positive integers $k$, $\l$, $m$ and $n$ with $k \geq 2$, define  $\B_\l^k(m,n)$ to be the linear program on nonnegative real variables $n_1,\ldots,n_m$ that aims to maximise $\sum_{i=1}^min_i$ subject to the following constraints.
\begin{align}
\medop\sum_{i=1}^m n_i &= n \label{E:con1}\\
\medop\sum_{i=1}^m\tbinom{i}{2}n_i &\leq \l\tbinom{m}{2} \label{E:con2}\\
\tfrac{1}{k-1-\alpha}\medop\sum_{i=1}^{k-1} i(i-1-\alpha)n_i + \medop\sum_{i=k}^{m}in_i & \leq \tfrac{1}{k-1}m(\l(m-1)-\alpha) \label{E:con3}
\end{align}
Further, define $\A_\l(m,n)$ to be the linear program on nonnegative real variables $n_1,\ldots,n_m$ that aims to maximise $\sum_{i=1}^min_i$ subject to \eqref{E:con1} and \eqref{E:con2}.
\end{definition}

In our next lemma we will show formally that the optimal values of the linear programs we just defined are upper bounds on $Z_{2,\l+1}(m,n)$, but we first give an example that illustrates the intuition behind the constraint \eqref{E:con3}.

\begin{example}\label{EX:deficiency}
Let $\l=1$, $m=99$ and $n=235$. The optimal value of $\A_1(99,235)$ is $A^6_1(99,235)=1631$, achieved when $n_6=14$, $n_7=221$ and $n_i=0$ for each $i \in \{1,\ldots,99\}\setminus \{6,7\}$ (note that this optimum will not always be integer valued). However there cannot exist a linear hypergraph of order 99 with total degree 1631. One way to see this is as follows. Suppose $H$ is a linear hypergraph of order 99 with $n_i$ edges of size $i$ for each $i \in \{1,\ldots,n\}$. A vertex of degree 17 or greater in $H$ must be incident to some edges of size at most 6, for being incident with 17 edges of size at least 7 would result in a vertex with total codegree at least 102, contradicting the fact that the total codegree at each vertex is at most 98. In fact, if we define the \emph{deficiency} of an edge $E$ of $H$ to be $\max\{7-|E|,0\}$ and the deficiency of a vertex to be the sum of the deficiencies of the edges incident with it, then it can be seen that a vertex of degree 17 must have deficiency at least 4 and, in general, that a vertex of degree $17+i$ must have deficiency at least $6i+4$ for any nonnegative integer $i$. From this we can conclude that the total degree $\sum_{i=1}^{99} in_i$ of $H$ is bounded above by $99\times16+\frac{t}{4}$, where $t=\sum_{i=1}^6 i(7-i)n_i$ is the total deficiency contributed by the edges in $H$ (note that an edge contributes its deficiency to each vertex it is incident with). Suitably rearranged, this condition is exactly what \eqref{E:con3} asserts for $k=7$ (and hence $\alpha=2$). We shall see below that, when we take this restriction into account, it emerges that $H$ can have total degree at most $\lfloor B^7_1(99,235) \rfloor = 1628$, where $B^7_1(99,235)$ is the optimal value of $\B^7_1(99,235)$.
\end{example}

\begin{lemma}\label{L:obeyLP}
If there is a $\l$-linear hypergraph $H$ of order $m$ with exactly $n_i$ edges of size $i$ for each $i \in \{1,\ldots,m\}$, then $(n_1,\ldots,n_m)$ satisfies \eqref{E:con2} and, for each integer $k \geq 2$, \eqref{E:con3}.
\end{lemma}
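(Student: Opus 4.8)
The plan is to verify each of the two asserted constraints directly from the combinatorial meaning of a $\l$-linear hypergraph $H$ of order $m$ whose edge-size distribution is $(n_1,\ldots,n_m)$.

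First I would handle \eqref{E:con2}. The total codegree $\sum_{\{x,y\}}\codeg_H(x,y)$, summed over all $\binom{m}{2}$ pairs of vertices, is at most $\l\binom{m}{2}$ by $\l$-linearity. On the other hand, each edge $E$ of $H$ contributes exactly $\binom{|E|}{2}$ to this total (it covers $\binom{|E|}{2}$ pairs, once each), so the total codegree equals $\sum_{i=1}^m \binom{i}{2}n_i$; combining the two gives \eqref{E:con2} immediately. (Edges of size $1$ contribute $0$, consistent with $\binom{1}{2}=0$.)

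The substantive part is \eqref{E:con3}, which formalises the deficiency argument sketched in Example~\ref{EX:deficiency}. Fix $k \geq 2$ and recall $\alpha \in \{0,\ldots,k-2\}$ with $\l(m-1)-\alpha \equiv 0 \mod{k-1}$. For a vertex $x$, the total codegree at $x$, namely $\sum_{y \neq x}\codeg_H(x,y) = \sum_{E \ni x}(|E|-1)$, is at most $\l(m-1)$. Writing $d = \deg_H(x)$ and letting $x$ lie in $a_i$ edges of size $i$ (so $\sum_i a_i = d$), this says $\sum_{i=1}^m (i-1)a_i \leq \l(m-1)$. The key elementary inequality to extract is a lower bound on the ``deficiency'' $\sum_{i=1}^{k-1}(k-i)a_i$ in terms of $d$: splitting $d = \sum_{i<k}a_i + \sum_{i\geq k}a_i$ and using $\sum_{i\geq k}(i-1)a_i \geq (k-1)\sum_{i\geq k}a_i$, one gets $(k-1)d - \sum_{i=1}^{k-1}(k-i)a_i \leq \sum_{i=1}^{m}(i-1)a_i \leq \l(m-1)$, hence $d \leq \frac{1}{k-1}\bigl(\l(m-1) + \sum_{i=1}^{k-1}(k-i)a_i\bigr)$. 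Now sum this over all $m$ vertices $x$: the left side becomes the total degree $\sum_{i=1}^m i\,n_i$, and in the right side $\sum_x \sum_{i=1}^{k-1}(k-i)a_i(x) = \sum_{i=1}^{k-1}(k-i)\,i\,n_i$ since each size-$i$ edge is counted by $i$ of its vertices. This yields $(k-1)\sum_{i=1}^m i\,n_i \leq m\l(m-1) + \sum_{i=1}^{k-1}i(k-i)n_i$. A short rearrangement — moving the $i < k$ terms of $\sum i\,n_i$ to the right, so the coefficient of $n_i$ becomes $i(k-1) - i(k-i) = i(i-1)$ for $i \leq k-1$ — gives $\sum_{i=1}^{k-1} i(i-1)n_i + (k-1)\sum_{i=k}^m i\,n_i \leq m\l(m-1)$. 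This is \eqref{E:con3} with $\alpha = 0$.

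For general $\alpha$ the argument is the same but one must be slightly more careful to match the exact coefficients $\tfrac{1}{k-1-\alpha}i(i-1-\alpha)$ appearing in \eqref{E:con3}; I would instead use the sharper per-vertex bound in which a degree-$d$ vertex has deficiency at least $(k-1)d - \l(m-1)$ and then observe that, because $\l(m-1) \equiv \alpha \mod{k-1}$, the right-hand side $\tfrac{1}{k-1}m(\l(m-1)-\alpha)$ is the integer obtained after accounting for the ``$\alpha$ slack'' — essentially the same rearrangement but tracking the congruence. Concretely, I would define for each vertex the quantity $\sum_{i=1}^{k-1}(i-1-\alpha)^+ a_i$ appropriately, or equivalently re-derive the bound from $\sum_{i}(i-1)a_i \le \l(m-1)$ directly and simplify; the coefficient bookkeeping is routine linear algebra once the per-vertex inequality is in hand. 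The main obstacle is purely this: getting the per-vertex codegree inequality combined with the degree count to reproduce the precise form of \eqref{E:con3}, including why the $\alpha$-term lands exactly as stated; the rest (summing over vertices, double-counting edges by their size) is bookkeeping. Finally I would note that edges of size $1$ cause no trouble: they contribute nothing to either codegree sum and their $n_1$ term has coefficient $0$ or is absent in both constraints.
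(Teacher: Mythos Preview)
Your treatment of \eqref{E:con2} and of \eqref{E:con3} in the case $\alpha=0$ is correct and matches the paper. The gap is in the general-$\alpha$ case, which you dismiss as ``routine linear algebra once the per-vertex inequality is in hand''. It is not: the per-vertex inequality you have, namely $t(x) \geq (k-1)\deg_H(x)-\l(m-1)$, only yields the $\alpha=0$ version after summing, and \eqref{E:con3} for $\alpha>0$ is \emph{strictly stronger} --- it is not satisfied by the real relaxation of the problem, so no amount of rearranging or ``tracking the congruence'' in the summed inequality will produce it.

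The missing idea is a per-vertex integrality step. Set $c=\frac{\l(m-1)-\alpha}{k-1}$, which is an \emph{integer} by the definition of $\alpha$. If $\deg_H(x)\leq c$ then trivially $\deg_H(x)\leq c+\frac{t(x)}{k-1-\alpha}$. If $\deg_H(x)>c$ then in fact $\deg_H(x)-c\geq 1$, and your inequality gives
\[
t(x)\ \geq\ (k-1)\bigl(\deg_H(x)-c\bigr)-\alpha\ \geq\ (k-1-\alpha)\bigl(\deg_H(x)-c\bigr),
\]
the second step using $\alpha\geq 0$ and $\deg_H(x)-c\geq 1$. This is exactly where the denominator improves from $k-1$ to $k-1-\alpha$; summing $\deg_H(x)\leq c+\frac{t(x)}{k-1-\alpha}$ over $x$ then gives \eqref{E:con3}. (Incidentally, your closing remark that the $n_1$ coefficient in \eqref{E:con3} is ``$0$ or absent'' is wrong when $\alpha>0$: it is $-\alpha/(k-1-\alpha)$.)
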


\begin{proof}
The fact that $H$ is $\l$-linear immediately implies that \eqref{E:con2} holds. Let $k \geq 2$ be a fixed integer. We complete the proof by showing that \eqref{E:con3} holds. Let $V=V(H)$ and, similarly to Example~\ref{EX:deficiency}, let the \emph{deficiency} of an edge $E$ of $H$ be $\max\{k-|E|,0\}$. Let $x$ be an arbitrary vertex of $H$. For each $i\in\{1,2,\ldots,m\}$, let $n_i(x)$ be the number of edges of $H$ of size $i$ that contain $x$. Let $n(x)=\sum_{i=1}^mn_i(x)$ be the total number of edges containing $x$ and let $t(x)=\sum_{i=1}^{k-1}(k-i)n_i(x)$ be the total deficiency of the edges incident with $x$. Let $c=\frac{1}{k-1}(\l(m-1)-\alpha)$ be the maximum degree that a vertex of $H$ can have if it is incident with no edges of positive deficiency.

Using the definitions of $n(x)$ and $t(x)$ and the fact $\codeg_H(x,y) \leq \l$ for each $y \in V \setminus \{x\}$, we have that
\begin{equation}\label{E:degBound}
(k-1)n(x) - t(x) =  \medop\sum_{i=1}^{k-1} (i-1)n_i(x)+(k-1)\medop\sum_{i=k}^{m}n_i(x) \leq \medop\sum_{i=1}^{m} (i-1)n_i(x) \leq \l(m-1).
\end{equation}
We claim that $n(x) \leq c+\frac{t(x)}{k-1-\alpha}$. This is obvious if $n(x) \leq c$. Otherwise $n(x) \geq c+1$, and we have
\[t(x) \geq (k-1)(n(x)-c)-\alpha \geq (k-1-\alpha)(n(x)-c)\]
where the first inequality follows from \eqref{E:degBound} after rearranging and substituting $\l(m-1)=c(k-1)+\alpha$, and the second follows because $\alpha \geq 0$ and $n(x)-c \geq 1$. So we do indeed have $n(x) \leq c+\frac{t(x)}{k-1-\alpha}$. Thus,
\begin{equation}\label{E:zBoundByp}
\medop\sum_{i=1}^min_{i} = \medop\sum_{x \in V} n(x) \leq \medop\sum_{x \in V} \left(c+\tfrac{t(x)}{k-1-\alpha}\right) = m c + \tfrac{1}{k-1-\alpha}\medop\sum_{i=1}^{k-1} i(k-i) n_i
\end{equation}
where the last equality follows by applying the definition of $t(x)$ and noting that $\sum_{x \in V}n_i(x)=in_i$ for each $i \in \{1,\ldots,k-1\}$. Using the definition of $c$ and rearranging shows that \eqref{E:zBoundByp} is equivalent to \eqref{E:con3}.
\end{proof}

In view of Lemma~\ref{L:obeyLP}, our next lemma shows that $A_\l^k(m,n)$ is an upper bound on $Z_{2,\l+1}(m,n)$. As mentioned in the introduction, this was established by Roman \cite[Theorem 1]{Rom}. We briefly prove it here, however, in order to demonstrate its relation to the new bounds $B_\l^k(m,n)$.

\begin{lemma}\label{L:aBoundWithEq}
Let $k$, $\l$, $m$ and $n$ be positive integers. The objective value of $\A_\l(m,n)$ is at most $A_\l^k(m,n)$ and this value is achieved if and only if
$\l \binom{m}{2}/\binom{k+1}{2} \leq n \leq \l \binom{m}{2}/\binom{k}{2}$, $n_k=\tfrac{k+1}{2}n-\tfrac{\l}{k}\tbinom{m}{2}$, $n_{k+1}=\tfrac{\l}{k}\tbinom{m}{2}-\tfrac{k-1}{2}n$ and $n_i=0$ for each $i \in \{1,\ldots,m\}\setminus\{k,k+1\}$.
\end{lemma}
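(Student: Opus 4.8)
The plan is to treat the linear program $\A_\l(m,n)$ directly, by combining its two constraints with carefully chosen nonnegative weights — in effect, exhibiting an explicit dual certificate. The starting point is the elementary inequality
\[
i \;\leq\; \tfrac{k+1}{2} + \tfrac{1}{k}\tbinom{i}{2}\qquad\text{for every positive integer }i,
\]
which, after multiplying through by $2k$ and rearranging, is just $(i-k)(i-k-1)\geq 0$; this holds for all integers $i$ and is an equality precisely when $i\in\{k,k+1\}$. First I would take an arbitrary feasible point $(n_1,\dots,n_m)$ of $\A_\l(m,n)$, multiply the displayed inequality by $n_i\geq 0$, and sum over $i$ to obtain
\[
\medop\sum_{i=1}^{m} i\,n_i \;\leq\; \tfrac{k+1}{2}\medop\sum_{i=1}^{m} n_i + \tfrac{1}{k}\medop\sum_{i=1}^{m}\tbinom{i}{2}n_i \;\leq\; \tfrac{k+1}{2}n + \tfrac{\l}{k}\tbinom{m}{2} = A_\l^k(m,n),
\]
where the last step applies \eqref{E:con1} and \eqref{E:con2}. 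This already yields the asserted upper bound on the objective value.

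For the equality characterisation I would examine when both inequalities in the second display are simultaneously tight. Tightness of the first forces $(i-k)(i-k-1)n_i=0$ for every $i$, hence $n_i=0$ for all $i\notin\{k,k+1\}$; tightness of the second forces \eqref{E:con2} to hold with equality, i.e.\ $\tbinom{k}{2}n_k+\tbinom{k+1}{2}n_{k+1}=\l\tbinom{m}{2}$. Combined with \eqref{E:con1}, these are two linear equations in $n_k$ and $n_{k+1}$ with a unique solution, and solving them (using $\tbinom{k+1}{2}-\tbinom{k}{2}=k$) gives exactly $n_k=\tfrac{k+1}{2}n-\tfrac{\l}{k}\tbinom{m}{2}$ and $n_{k+1}=\tfrac{\l}{k}\tbinom{m}{2}-\tfrac{k-1}{2}n$. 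Such a point is feasible for $\A_\l(m,n)$ if and only if $n_k\geq 0$ and $n_{k+1}\geq 0$, and these two conditions rearrange to $\l\tbinom{m}{2}/\tbinom{k+1}{2}\leq n$ and $n\leq \l\tbinom{m}{2}/\tbinom{k}{2}$ respectively. Conversely, when $n$ lies in this range, substituting these values of $n_k$ and $n_{k+1}$ (and $n_i=0$ otherwise) verifies directly that \eqref{E:con1} holds, that \eqref{E:con2} holds with equality, and that the objective value equals $A_\l^k(m,n)$; this settles the `if' direction, and the uniqueness observed above shows that this is the only optimal point whenever the bound is attained.

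I do not expect a genuine obstacle here: the argument is a short, essentially self-dual linear-programming computation. The only care required is in the equality analysis — being sure that tightness of the two summed inequalities is not merely sufficient but necessary for the objective to reach $A_\l^k(m,n)$ (it is, since that chain of inequalities is the only route to the value), and checking that the degenerate corner cases (for instance $m=1$, or $k=1$, where $\tbinom{k}{2}=0$ and the displayed ratios must be read appropriately) cause no trouble.
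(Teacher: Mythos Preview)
Your proposal is correct and takes essentially the same approach as the paper: both arguments combine \eqref{E:con1} and \eqref{E:con2} with weights $\tfrac{k+1}{2}$ and $\tfrac{1}{k}$, use that the resulting coefficient $\tfrac{k+1}{2}+\tfrac{1}{k}\tbinom{i}{2}$ of $n_i$ satisfies $\tfrac{k+1}{2}+\tfrac{1}{k}\tbinom{i}{2}-i=\tfrac{1}{2k}(i-k)(i-k-1)\geq 0$ with equality exactly for $i\in\{k,k+1\}$, and then solve the two-equation linear system for the equality case. The only difference is presentational (you bound $\sum i n_i$ first and then apply the constraints, whereas the paper combines the constraints first and then compares coefficients to $i$); your equality analysis and feasibility check are also handled the same way.
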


\begin{proof}
By adding $\frac{k+1}{2}$ times \eqref{E:con1} to $\frac{1}{k}$ times \eqref{E:con2}, we obtain
\begin{equation}\label{E:weightedSumEasy}
\sum_{i=1}^{m} \left(\tfrac{k+1}{2}+\tfrac{1}{k}\tbinom{i}{2}\right)n_i \leq A_\l^k(m,n).
\end{equation}
Furthermore, we have equality in \eqref{E:weightedSumEasy} if and only if we have equality in \eqref{E:con2}.

Now $\tfrac{k+1}{2}+\tfrac{1}{k}\tbinom{i}{2}-i$ is a convex quadratic in $i$ with zeroes at $i=k$ and $i=k+1$. So $\tfrac{k+1}{2}+\tfrac{1}{k}\tbinom{i}{2}=i$ for $i \in \{k,k+1\}$ and $\tfrac{k+1}{2}+\tfrac{1}{k}\tbinom{i}{2}>i$ for $i \in \{1,\ldots,m\} \setminus \{k,k+1\}$. So we see that the left hand side of \eqref{E:weightedSumEasy} is at least $\sum_{i=1}^min_i$, the objective function of $\A_\l(m,n)$, with equality if and only if $n_i=0$ for each $i \in \{1,\ldots,m\} \setminus\{k,k+1\}$. Thus the objective value of $\A_\l(m,n)$ is at most $A_\l^k(m,n)$ and, for equality to hold, we must have $n_k+n_{k+1}=n$ from \eqref{E:con1} and $\binom{k}{2}n_k+\binom{k+1}{2}n_{k+1}=\l\binom{m}{2}$ from equality in \eqref{E:con2}. Solving these equations and noting that the resulting values of $n_k$ and $n_{k+1}$ must be nonnegative yields the result.
\end{proof}

We can similarly show that $B_\l^k(m,n)$ is an upper bound on $Z_{2,\l+1}(m,n)$ and characterise the cases where we have equality. Recall that when $\l(m-1) \equiv 0 \mod{k-1}$ we have $B_\l^k(m,n)=A_\l^{k-1}(m,n)$ for each $k \geq 2$, so we restrict our attention to cases where $\l(m-1) \not\equiv 0 \mod{k-1}$.

\begin{lemma}\label{L:bBoundWithEq}
Let $k$, $\l$, $m$ and $n$ be positive integers with $k \geq 2$ and $\l(m-1) \not\equiv 0 \mod{k-1}$. The objective value of $\B^k_\l(m,n)$ is at most $B^k_\l(m,n)$ and this value is achieved if and only if $\l \binom{m}{2}-\frac{\alpha k }{k+1}m \leq n\binom{k}{2} \leq \l \binom{m}{2}+m(k-1-\alpha)$ and $n_i=n^*_i$ for each $i \in \{1,\ldots,m\}$ where
\[n^*_i =
\left\{
  \begin{array}{ll}
    \tfrac{(k+1)(k-1-\alpha)}{(k-1)(k^2-1-\alpha)}\bigl(n\tbinom{k}{2}-\l\tbinom{m}{2}+\tfrac{\alpha k}{k+1} m\bigr)\quad & \hbox{if $i = k-1$} \\[1mm]
    \tfrac{\alpha}{k^2-1-\alpha}\bigl(\l\tbinom{m}{2}-n\tbinom{k}{2}+m(k-1-\alpha)\bigr) & \hbox{if $i = k+1$} \\[1mm]
    n-n^*_{k-1}-n^*_{k+1} & \hbox{if $i = k$} \\[1mm]
    0 & \hbox{otherwise.}
  \end{array}
\right.
\]
\end{lemma}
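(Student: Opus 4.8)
The plan is to follow the same template as the proof of Lemma~\ref{L:aBoundWithEq}, but now taking a weighted sum of all three constraints \eqref{E:con1}, \eqref{E:con2} and \eqref{E:con3} rather than just the first two. Concretely, I would look for nonnegative multipliers $\mu_1,\mu_2,\mu_3$ (with $\mu_3>0$ so that \eqref{E:con3} is genuinely used; this is where $\l(m-1)\not\equiv 0\bmod(k-1)$, i.e. $\alpha\geq 1$, matters) such that the combination $\mu_1\cdot\eqref{E:con1}+\mu_2\cdot\eqref{E:con2}+\mu_3\cdot\eqref{E:con3}$ produces an inequality of the form $\sum_{i=1}^m c_i n_i \le B_\l^k(m,n)$ in which the coefficient $c_i$ satisfies $c_i\ge i$ for all $i\in\{1,\ldots,m\}$, with equality exactly at $i\in\{k-1,k,k+1\}$. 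The coefficient of $n_i$ coming from the combination is a piecewise function of $i$: for $i\le k-1$ it involves the $\tfrac{1}{k-1-\alpha}i(i-1-\alpha)$ term from \eqref{E:con3}, while for $i\ge k$ it involves the linear term $i$ from \eqref{E:con3}; on each piece it is (affine plus) a convex quadratic in $i$ coming from the $\binom{i}{2}$ term in \eqref{E:con2}. I would solve the linear system forcing $c_{k-1}=k-1$, $c_k=k$, $c_{k+1}=k+1$ to pin down $\mu_1,\mu_2,\mu_3$, then verify the right-hand side collapses to exactly the expression defining $B_\l^k(m,n)$ (using $\rho=\alpha$ and $\l(m-1)=c(k-1)+\alpha$ as in Lemma~\ref{L:obeyLP}), and finally check the inequalities $c_i\ge i$ on each of the two pieces.

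In more detail, the three equalities $c_{k-1}=k-1$, $c_k=k$, $c_{k+1}=k+1$ give three linear equations in $\mu_1,\mu_2,\mu_3$, which I expect to have a unique solution with $\mu_2,\mu_3>0$ (and $\mu_1$ possibly negative, which is fine since \eqref{E:con1} is an equality). Having fixed these, the inequality $c_i\ge i$ for $i\ge k$ reduces to showing a convex quadratic in $i$ with roots at $i=k,k+1$ is nonnegative off those roots — immediate. For $i\le k-1$ one gets, after substituting the term $\tfrac{\mu_3}{k-1-\alpha}i(i-1-\alpha)$, another quadratic in $i$; I would argue it has its two roots at $i=k-1$ and $i=k+1$ (the latter being a ``virtual'' root outside the range $i\le k-1$), so that it is nonnegative throughout $\{1,\ldots,k-1\}$, with equality only at $i=k-1$. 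Note that $i=k$ does not appear in the $i\le k-1$ piece, so the coefficient of $n_k$ is governed solely by the $i\ge k$ piece, consistent with $c_k=k$. This analysis shows the objective value of $\B^k_\l(m,n)$ is at most $B^k_\l(m,n)$, and that equality forces $n_i=0$ for $i\notin\{k-1,k,k+1\}$ together with equality in both \eqref{E:con2} and \eqref{E:con3}.

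Given $n_i=0$ off $\{k-1,k,k+1\}$ and equality in \eqref{E:con1}, \eqref{E:con2}, \eqref{E:con3}, I then have three linear equations in the three unknowns $n_{k-1},n_k,n_{k+1}$: namely $n_{k-1}+n_k+n_{k+1}=n$, the $\binom{i}{2}$-weighted sum equals $\l\binom{m}{2}$, and the weighted sum in \eqref{E:con3} equals $\tfrac{1}{k-1}m(\l(m-1)-\alpha)$. Solving this system should yield precisely $n^*_{k-1},n^*_{k},n^*_{k+1}$ as stated; here I would use the identity $k^2-1-\alpha=(k-1)(k+1)-\alpha$ and simplify carefully. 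The final point is that a feasible optimum requires $n^*_{k-1}\ge 0$ and $n^*_{k+1}\ge 0$ (the value $n^*_k$ is automatically of the right sign in the relevant range, or one checks it separately), and these two sign conditions are exactly $\l\binom{m}{2}-\tfrac{\alpha k}{k+1}m\le n\binom{k}{2}$ and $n\binom{k}{2}\le \l\binom{m}{2}+m(k-1-\alpha)$ respectively (using $\alpha\ge 1$ for the $n^*_{k+1}$ condition to be meaningful). The main obstacle I anticipate is purely bookkeeping: solving the $3\times 3$ system cleanly and confirming that the weighted right-hand side simplifies to the somewhat opaque closed form defining $B^k_\l(m,n)$ — it is a routine but error-prone computation, and it is worth double-checking the $\rho\to\alpha$ substitution and the exact form of the $\tfrac{\alpha k m}{k+1}$ and $m(k-1-\alpha)$ terms.
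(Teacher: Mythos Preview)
Your proposal is correct and follows essentially the same route as the paper: take a nonnegative combination of \eqref{E:con1}, \eqref{E:con2}, \eqref{E:con3} so that the resulting coefficients $c_i$ satisfy $c_i\geq i$ with equality precisely at $i\in\{k-1,k,k+1\}$, then solve the resulting $3\times 3$ system and read off the sign conditions on $n^*_{k-1}$ and $n^*_{k+1}$. One small correction to your speculation: on the lower piece $i\leq k-1$ the quadratic $c_i-i$ in fact has its ``virtual'' second root at $i=k$ (not $i=k+1$), namely $c_i-i=\tfrac{(k+1)(k-i)(k-1-i)}{2(k^2-1-\alpha)}$, but this of course does not affect the conclusion.
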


\begin{proof}
Note that $\alpha \in \{1,\ldots,k-2\}$ since $\l(m-1) \not\equiv 0 \mod{k-1}$. By adding $\frac{1}{2}k(k^2-1)$ times \eqref{E:con1}, $k-1$ times \eqref{E:con2} and $k-1-\alpha$ times \eqref{E:con3}, then dividing the whole by $k^2-1-\alpha$, and finally performing some routine simplification, we obtain
\begin{equation}\label{E:weightedSum}
\sum_{i=1}^{m} \mfrac{k(k^2-1)+(k-1)i(i-1)+2i(\min\{k,i\}-1-\alpha)}{2(k^2-1-\alpha)}n_i \leq B_\l^k(m,n)
\end{equation}
where the coefficients of the $n_i$ can be verified by considering the cases $i \in \{1,\ldots,k-1\}$ and $i \in \{k,\ldots,m\}$ separately. Furthermore, we have equality in \eqref{E:weightedSum} if and only if we have equality in both \eqref{E:con2} and \eqref{E:con3}. For brevity, let $c_i$ be the coefficient of $n_i$ on the left hand side of \eqref{E:weightedSum} for each $i \in \{1,\ldots,m\}$. More calculation reveals that
\[c_i-i=
\left\{
  \begin{array}{ll}
    \mfrac{(k+1)(k-i)(k-1-i)}{2(k^2-1-\alpha)}\quad & \hbox{if $i \in \{1,\ldots,k-1\}$} \\[2mm]
    \mfrac{(k-1)(i-k)(i-1-k)}{2(k^2-1-\alpha)} & \hbox{if $i \in \{k,\ldots,m\}$.}
  \end{array}
\right.
\]
From this it is clear that $c_i=i$ if $i \in \{k-1,k,k+1\}$ and $c_i > i$ otherwise. So we see that the left hand side of \eqref{E:weightedSum} is at least $\sum_{i=1}^min_i$, the objective function of $\B_\l^k(m,n)$, with equality if and only if $n_i=0$ for each $i \in \{1,\ldots,m\} \setminus \{k-1,k,k+1\}$. So the objective value of $\B_\l^k(m,n)$ is at most $B_\l^k(m,n)$ with equality if and only if equality holds in both \eqref{E:con2} and \eqref{E:con3} and $n_i=0$ for each $i \notin \{k-1,k,k+1\}$. When \eqref{E:con1} holds and $n_i=0$ for each $i \notin \{k-1,k,k+1\}$, equality in \eqref{E:con2} and \eqref{E:con3} reduces to the following.
\begin{align}
  n\tbinom{k}{2}-(k-1)n_{k-1}+kn_{k+1} &= \l\tbinom{m}{2} \label{E:con2var}\\
  nk-\tfrac{2k-2-\alpha}{k-1-\alpha}n_{k-1}+n_{k+1} &= \tfrac{1}{k-1}m(\l(m-1)-\alpha)\label{E:con3var}
\end{align}
Thus, because $n_{k-1}=n^*_{k-1}$ and $n_{k+1}=n^*_{k+1}$ are the unique solutions to this system of equations, the objective function of $\B^k_\l(m,n)$ equals $B^k_\l(m,n)$ if and only if $(n_1,\ldots,n_m)=(n^*_1,\ldots,n^*_m)$. Observing that $n^*_{k-1}$ and $n^*_{k+1}$ must be nonnegative gives $\l\binom{m}{2}-\frac{\alpha k }{k+1}m \leq n\binom{k}{2} \leq \l\binom{m}{2}+m(k-1-\alpha)$.
\end{proof}

In the above lemma, note that the condition $\l \binom{m}{2}-\frac{\alpha k }{k+1}m \leq n\binom{k}{2} \leq \l \binom{m}{2}+m(k-1-\alpha)$ does not guarantee that the value of $n_k^*$ will be nonnegative and hence that the given solution will be feasible. However, the value of $n_k^*$ will be nonnegative whenever $m \geq \frac{k(k-1)}{\l}+1$. To confirm this, it is sufficient to check it for $n$ at each end of the prescribed interval, because the value of $n_k^*$ is a linear function in $n$.

In general, the optima given by Lemma~\ref{L:bBoundWithEq} have three nonzero variables, in contrast to the optima given by Lemma~\ref{L:aBoundWithEq} which have only two. This means that while hypergraphs witnessing that $Z_{2,\l+1}(m,n)=A^k_\l(m,n)$ might only have edges of two different sizes, hypergraphs witnessing that $Z_{2,\l+1}(m,n)=B^k_\l(m,n)$ must have edges of three different sizes. We will see the effects of this in Sections~\ref{S:achievingA} and \ref{S:achievingB}. We now prove Theorem~\ref{T:upperBounds} and then discuss Corollary~\ref{C:upperBounds}.

\begin{proof}[\textup{\textbf{Proof of Theorem~\ref{T:upperBounds}.}}]
Lemmas~\ref{L:obeyLP}, \ref{L:aBoundWithEq} and \ref{L:bBoundWithEq} establish that $Z_{2,\l+1}(m,n) \leq A_\l^k(m,n)$ and that, for each integer $k \geq 2$, if $\l(m-1) \not\equiv 0 \mod{k-1}$ then $Z_{2,\l+1}(m,n) \leq B_\l^k(m,n)$. Furthermore we have seen that, for each $k \geq 2$, we have $B_\l^k(m,n)=A_\l^{k-1}(m,n)$ when $\l(m-1) \equiv 0 \mod{k-1}$. The result follows.
\end{proof}

Although Corollary~\ref{C:upperBounds} follows immediately from Theorem~\ref{T:upperBounds} and \cite{Rom}, we briefly discuss some aspects of it. Firstly, note that the lower interval of $n$ values given in the statement of Corollary~\ref{C:upperBounds} is empty if $\l(m-1) \equiv 0 \mod{k}$ and the upper interval of $n$ values is empty if $\l(m-1) \equiv 0 \mod{k-1}$.

Next we note that the upper and lower intervals of $n$ values are disjoint. To see this, first note that $\l(m-1) \equiv \alpha k+\beta(k-1) \mod{k(k-1)}$ by the Chinese remainder theorem because $\l(m-1) \equiv \alpha \mod{k-1}$ and $\l(m-1) \equiv -\beta \mod{k}$. Thus we have
\[\left(\l\tbinom{m}{2}-\tfrac{\alpha k }{k+1}m\right)/\tbinom{k}{2}-\left(\l\tbinom{m}{2}+\beta m\right)/\tbinom{k+1}{2} = \mfrac{2m(\l(m-1)-\alpha k-\beta(k-1))}{k(k^2-1)} \geq 0\]
where the last inequality can be seen to hold using the facts $\alpha < k-1$, $\beta < k$, $\l(m-1) \geq k(k-1)$ and $\l(m-1) \equiv \alpha k+\beta(k-1) \mod{k(k-1)}$.

Finally we indicate how to establish that Corollary~\ref{C:upperBounds} does indeed state the best of the bounds in each case.
Let $\ell$, $\l$ and $m$ be positive integers with $\ell \geq 2$ and let $\alpha_\ell$ and $\beta_\ell$ be the least nonnegative integers such that $\l(m-1)-\alpha_\ell \equiv 0 \mod{\ell-1}$ and $\l(m-1)+\beta_\ell \equiv 0 \mod{\ell}$ respectively. It is routine to calculate that $A^{\ell}_\l(m,n) \leq B^{\ell}_\l(m,n)$ when $n \leq (\l\binom{m}{2}-\frac{\alpha_\ell \ell m}{\ell+1})/\binom{\ell}{2}$ and $A^{\ell}_\l(m,n) \geq B^{\ell}_\l(m,n)$ otherwise. Similarly $B^{\ell+1}_\l(m,n) \leq A^{\ell}_\l(m,n)$ when $n \leq (\l\binom{m}{2}+\beta_\ell m)/\binom{\ell+1}{2}$ and $B^{\ell+1}_\l(m,n) \geq A^{\ell}_\l(m,n)$ otherwise. Using these facts inductively for the integers $\ell \geq 2$ shows that the bounds stated in Corollary~\ref{C:upperBounds} are in each case the best of all the bounds $A^{\ell}_\l(m,n)$ and $B^{\ell}_\l(m,n)$. Note that, for any $\ell>k$, the bound $B^{\ell+1}_\l(m,n)$ is not competitive in the cases we are interested in because
\[\left(\l \tbinom{m}{2}+\beta_\ell m\right)/\tbinom{\ell+1}{2} \leq \l \tbinom{m}{2}/\tbinom{k+1}{2}.\]
This inequality can be confirmed using $\beta_\ell \leq \ell-1$ and  $m \geq \frac{k(k-1)}{\l}+1$ except for possibly when $(\ell,\beta_\ell)=(k+1,k)$ or $(\ell,\beta_\ell,k)=(4,3,2)$ . When $(\ell,\beta_\ell)=(k+1,k)$, we must have $\l(m-1) \equiv -k \mod{k+1}$ and hence in fact $\l(m-1) \geq k^2$, and this can be used to confirm the inequality. When $(\ell,\beta_\ell,k)=(4,3,2)$, we must have $\l(m-1) \equiv 1 \mod{4}$ and hence $\l(m-1) \geq 5$ and again we can confirm the inequality.

\section{Preliminaries for Theorems~\ref{T:boundAchieved} and \ref{T:boundNotAchieved}}\label{S:prelimsForLBs}

We first introduce some more notation relating to hypergraphs. Let $\lambda$ be a positive integer. For a vertex $x$ of a hypergraph $H$, we define $N_H^\l(x)$, the \emph{$\l$-neigbourhood of $x$}, to be the set $\{y \in V(H) \setminus \{x\}:\codeg(x,y) \geq \l\}$. When $\l=1$ we omit the superscript and recover the usual notation for a neighbourhood. Let $H_1$ and $H_2$ be hypergraphs. The union $H_1 \uplus H_2$ of $H_1$ and $H_2$ is the hypergraph with vertex set $V(H_1) \cup V(H_2)$ and edge multiset $E(H_1 \uplus H_2)$ defined by $\mu_{E(H_1 \uplus H_2)}(S)=\mu_{E(H_1)}(S)+\mu_{E(H_2)}(S)$ for each subset $S$ of $V(H_1) \cup V(H_2)$, where $\mu_{E(H_1)}(S)$ denotes the number of times that $S$ occurs in the multiset $E(H_1)$ and $\mu_{E(H_2)}(S)$ denotes the number of times that $S$ occurs in the multiset $E(H_2)$. Denote by $H_1 - H_2$ the hypergraph with vertex set $V(H_1)$ and edge multiset $E(H_1 - H_2)$ defined by $\mu_{E(H_1 - H_2)}(S)=\max\{\mu_{E(H_1)}(S)-\mu_{E(H_2)}(S),0\}$ for each subset $S$ of $V(H_1)$. If $H$ is a hypergraph, the \emph{underlying multigraph} $G$ of $H$ is the multigraph $G$ with vertex set $V(H)$ such that $\codeg_G(x,y)=\codeg_H(x,y)$ for all distinct $x,y \in V(H)$. On the other hand, if $H$ is $\l$-linear, we define the \emph{$\l$-defect} of $H$ to be the multigraph $G$ with vertex set $V(H)$ such that $\codeg_G(x,y)=\l-\codeg_H(x,y)$ for all distinct $x,y \in V(H)$.
We define a \emph{matching} as a graph in which each vertex has degree 1. We allow the possibility of a trivial matching with empty vertex and edge sets.

An edge decomposition, hereafter simply a \emph{decomposition}, of a multigraph $G$ is a multiset $\mathcal{F}$ of simple subgraphs of $G$ such that each edge of $G$ is in exactly one of the graphs in $\mathcal{F}$ (so that, for all distinct $x,y \in V(G)$, there are exactly $\codeg_G(x,y)$ graphs in $\mathcal{F}$ in which $x$ and $y$ are adjacent). We say the decomposition is an \emph{$F$-decomposition} if $F$ is a graph such that each graph in $\mathcal{F}$ is isomorphic to $F$. For a nonempty graph $F$, we define $\gcd(F)=\gcd\{\deg_F(x):x \in V(F)\}$  and we say a multigraph $G$ is \emph{$F$-divisble} if $|E(F)|$ divides $|E(G)|$ and $\gcd(F)$ divides $\deg_G(x)$ for each $x \in V(G)$. It is easy to see that if there is an $F$-decomposition of a multigraph $G$, then $G$ is necessarily $F$-divisible. Note that an $\ell$-uniform hypergraph with underlying multigraph $G$ can also be viewed as a $K_\ell$-decomposition of $G$. Our approach to proving Theorem~\ref{T:boundAchieved} and the latter part of Theorem~\ref{T:boundNotAchieved}(iii) is based on the fact that large $F$-divisible graphs of high minimum degree have $F$-decompositions. We direct the reader to \cite{GloKuhLoMonOst} or \cite{Kee} for a discussion of this problem and stronger results than we require here. The following result (see \cite[Corollary~1.6(i)]{GloKuhLoMonOst}, for example) will be sufficient for our purposes.\pagebreak

\begin{theorem}\label{T:denseSimpleFDecomps}
Let $F$ be a graph. There is a positive constant $\gamma=\gamma(F)$ and an integer $M=M(F)$ such that, for all integers $m > M$, each $F$-divisible graph of order $m$ with minimum degree at least $(1-\gamma)m$ has an $F$-decomposition.
\end{theorem}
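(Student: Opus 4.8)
The plan is simply to invoke the existing theory of $F$-decompositions of dense graphs: the statement is exactly Corollary~1.6(i) of \cite{GloKuhLoMonOst}, so no new argument is required, and in the remainder of the paper we will use it purely as a black box, always with $F$ a complete graph $K_\ell$ on a bounded number of vertices (so that the constants $\gamma$ and $M$ depend only on $\ell$). For orientation, I will sketch the method by which such a result is proved.

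One first shows that an $F$-divisible graph $G$ of order $m$ with $\delta(G) \geq (1-\gamma)m$ is quasirandom enough that a R\"{o}dl-nibble / random greedy argument produces a partial $F$-decomposition covering all but a tiny proportion of $E(G)$; the uncovered ``leave'' $L \subseteq G$ is then sparse but otherwise uncontrolled, and in general cannot be completed to a full decomposition on its own. The remedy is the method of \emph{iterative absorption}: before running the nibble one reserves a carefully built \emph{absorber} $A \subseteq G$ with the property that $A \cup L'$ has an $F$-decomposition for every leave $L'$ in a prescribed large family, and one runs the nibble on $G-A$ so that the true leave falls inside that family. Divisibility of $G$ (that is, $|E(F)| \mid |E(G)|$ and $\gcd(F) \mid \deg_G(x)$ for all $x$) is exactly what guarantees that every leave arising in the process is itself $F$-divisible, which is what makes the absorbing gadgets applicable.

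To keep the absorber small enough to embed, one uses a \emph{vortex}: a nested sequence $V(G) = U_0 \supseteq U_1 \supseteq \cdots \supseteq U_\ell$ with each $|U_{i+1}|$ a constant fraction of $|U_i|$ but $|U_\ell|$ bounded, chosen so that each $G[U_i]$ still has the high relative minimum degree needed to rerun the nibble. Peeling off copies of $F$ to move the leave from $U_i$ down to $U_{i+1}$, after $\ell$ steps the leave is supported on the bounded set $U_\ell$, where a finite absorber assembled from elementary ``transformer'' gadgets (which exchange one small $F$-divisible configuration for another) completes the decomposition.

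The main obstacle — and the reason the original proofs are long — is the construction and verification of these absorbing structures: one must show that a dense graph has room to pack enough edge-disjoint gadgets to neutralise every possible leave, which requires a hierarchy of auxiliary cover-down and transformer lemmas together with some delicate counting. All of this is packaged in \cite{GloKuhLoMonOst} (or, alternatively, \cite{Kee}), and we will not reproduce it here.
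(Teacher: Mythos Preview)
Your proposal is correct and matches the paper's treatment exactly: the paper also simply cites \cite[Corollary~1.6(i)]{GloKuhLoMonOst} for this result and uses it as a black box, giving no argument of its own. One small correction to your accompanying commentary: the theorem is \emph{not} applied only with $F$ a complete graph---Lemma~\ref{T:denseFDecompsConstLambda} invokes it for an arbitrary fixed graph $F$, and (via Lemma~\ref{T:denseFDecompsArbLambda}) Lemma~\ref{L:middleRange} uses it with $F$ a vertex-disjoint union of three complete graphs.
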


We require an analogue of Theorem~\ref{T:denseSimpleFDecomps} that gives decompositions of $\l$-linear multigraphs. Furthermore, we are interested in cases where $\l$ is arbitrary, rather than being fixed as the order of the host multigraph grows. We first use Theorem~\ref{T:denseSimpleFDecomps} to prove an analogous result for $\l$-linear multigraphs where $\l$ is fixed, and then in turn use this result to obtain the tool we require. In proving the intermediate result we make use of the following special case of \cite[Theorem 1(ii)]{KanTok}.

\begin{lemma}[\textup{\cite{KanTok}}]\label{L:KanTok}
Let $C$ be a graph with vertex set $V$, let $s \geq 2$ be an integer and, for each $x \in V$, let $a_x$ be an integer such that $1 \leq a_x \leq s$. If $\sum_{x \in V}a_x \equiv 0 \mod{2}$, $|V(C)| \geq (s+1)^2$, and $\delta(C) \geq \frac{s}{s+1}|V|$, then there is a subgraph $C'$ of $C$ such that $\deg_{C'}(x)=a_x$ for each $x \in V$.
\end{lemma}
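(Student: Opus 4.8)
The statement is a degree-constrained subgraph (that is, $f$-factor) problem, so the plan is to invoke Tutte's $f$-factor theorem with $f(x)=a_x$ and verify its hypothesis using the density condition $\delta(C) \geq \frac{s}{s+1}|V|$. Recall that Tutte's theorem guarantees an $f$-factor of $C$ provided that, for every pair of disjoint sets $S,T \subseteq V$,
\[
\medop\sum_{x\in S}f(x)+\medop\sum_{x\in T}\bigl(\deg_{C-S}(x)-f(x)\bigr)-q(S,T)\geq 0,
\]
where $q(S,T)$ is the number of components $D$ of $C-(S\cup T)$ for which $e_C(V(D),T)+\sum_{x\in V(D)}f(x)$ is odd. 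Since the left-hand side above is always congruent to $\sum_{x\in V}f(x)\pmod 2$, and we are given $\sum_{x\in V}a_x\equiv 0\pmod 2$, it actually suffices to prove that the left-hand side is at least $-1$ for all such $S$ and $T$. Write $n=|V|$, $\delta=\delta(C)$, $s_0=|S|$, $t_0=|T|$ and $u=n-s_0-t_0=|V(C)\setminus(S\cup T)|$.

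The key structural observation is that every component $D$ of $C-(S\cup T)$ satisfies $|V(D)|\geq \delta-s_0-t_0+1$: a vertex of $D$ has all of its at least $\delta$ neighbours inside $V(D)\cup S\cup T$, so $|V(D)|-1+s_0+t_0\geq\delta$. In particular, if $u> \tfrac{2}{s+1}n$ then $C-(S\cup T)$ has minimum degree more than $\tfrac12 u$ and is therefore connected (or empty), so $q(S,T)\leq 1$; moreover in this regime $s_0\leq n-u<\tfrac{s-1}{s+1}n$, so for each $x\in T$ we have $\deg_{C-S}(x)-f(x)\geq \delta-s_0-s>\tfrac{1}{s+1}n-s\geq 1$, the last step using $n\geq(s+1)^2$. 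Combining this with $\sum_{x\in S}f(x)\geq s_0$ (each $a_x\geq 1$) shows the displayed sum is at least $s_0+t_0-1\geq -1$, as needed, with the boundary cases $S=\emptyset$ and $T=\emptyset$ checked directly.

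It then remains to treat the complementary regime $u\leq\tfrac{2}{s+1}n$, where $s_0+t_0$ is large. Here the plan is to bound $q(S,T)$ either trivially by $u$ or, when $\delta-s_0-t_0+1>0$, by the sharper $u/(\delta-s_0-t_0+1)$ coming from the component-size bound, and to play this against $\sum_{x\in S}f(x)\geq s_0$, the lower bound $\deg_{C-S}(x)\geq\delta-s_0$ (useful when $s_0$ is small), and the trivial $\deg_{C-S}(x)\geq 0$ (useful when $s_0$ is large), with the degenerate cases $S\cup T=V$, $S=\emptyset$ and $T=\emptyset$ handled separately. I expect the main obstacle to be precisely this bookkeeping in the regime where $S$ and $T$ are simultaneously large: one must split into sub-cases according to the relative sizes of $s_0$, $t_0$ and $\delta$, and it is here that the precise hypothesis $n\geq(s+1)^2$ is needed to close the arithmetic. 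Everything else — the reduction to an $f$-factor, the parity shortcut, the component-size lemma, and the easy regime — is routine. (Alternatively, one can simply quote \cite{KanTok}, of which this is a special case.)
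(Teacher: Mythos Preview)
The paper does not prove this lemma at all: it is stated as a special case of \cite[Theorem~1(ii)]{KanTok} and simply quoted. You yourself note this option in your final parenthetical, and that is precisely the route the paper takes.

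Your proposal goes further and sketches a direct proof via Tutte's $f$-factor criterion. This is the natural approach (and is essentially how Kano and Tokushige establish their more general result), and the plan is sound: the parity shortcut, the component-size bound $|V(D)|\geq\delta-s_0-t_0+1$, and the split according to the size of $u=n-s_0-t_0$ are the right ingredients. The ``easy'' regime $u>\tfrac{2}{s+1}n$ is fully handled in your sketch; the ``dense'' regime $u\leq\tfrac{2}{s+1}n$ is left as bookkeeping, which is accurate---it is a somewhat tedious but entirely standard case analysis balancing $q(S,T)\leq u$ (or the sharper bound) against the contributions from $S$ and $T$, and it is where $n\geq(s+1)^2$ gets used. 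So your proposal is correct but strictly more than what the paper does; for the purposes of this paper, the citation suffices.
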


\begin{lemma}\label{T:denseFDecompsConstLambda}
Let $\lambda$ be a positive integer and let $F$ be a graph. There is a positive constant $\gamma=\gamma(\lambda,F)$ and integer $M=M(\lambda,F)$ such that, for all integers $m>M$, each $F$-divisible $\lambda$-linear multigraph of order $m$ with minimum degree at least $(1-\gamma)\l m$ has an $F$-decomposition.
\end{lemma}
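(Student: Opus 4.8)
The plan is to reduce the arbitrary-$\lambda$ statement to the fixed-$\lambda$ result, Theorem~\ref{T:denseSimpleFDecomps}, via a two-stage process. Suppose $G$ is an $F$-divisible $\lambda$-linear multigraph of order $m$ with $\delta(G) \geq (1-\gamma)\lambda m$. The obstruction to applying Theorem~\ref{T:denseSimpleFDecomps} directly is that $G$ is a multigraph, and its multiplicity $\lambda$ may be large. However, the complement structure is sparse: the $\lambda$-defect $G'$ of $G$ (on the same vertex set, with $\codeg_{G'}(x,y) = \lambda - \codeg_G(x,y)$) is a simple graph, and since $\delta(G) \geq (1-\gamma)\lambda m$, every vertex of $G'$ has degree at most roughly $\gamma \lambda m + (\lambda-1) \cdot (\text{something})$ — more carefully, the total degree deficiency at a vertex $x$ is $\lambda(m-1) - \deg_G(x) \leq \gamma \lambda m + (\lambda - 1)$, and since each nonedge of $G'$ contributes at least $1$, we get $\deg_{G'}(x) \leq \gamma\lambda m + \lambda$. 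So $G'$ is simple with maximum degree $O(\gamma\lambda m)$ — sparse in a way that does not depend on $\lambda$ being fixed.

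The first stage is to peel $G$ apart as $G = \lambda K_m - G' + (\text{correction})$; more precisely, write $G$ as the edge-disjoint union of $\lambda$ simple graphs $G_1, \ldots, G_\lambda$, each of large minimum degree. I would do this greedily: repeatedly extract a spanning simple subgraph of the current multigraph that uses every ``thick'' edge (every pair with current multiplicity equal to the current maximum). One clean way to organise this is to note that $\lambda K_m$ decomposes into $\lambda$ copies of $K_m$, and $G = \lambda K_m - G'$; since $G'$ is simple with small maximum degree, one can distribute the missing edges of $G'$ among the $\lambda$ copies of $K_m$ so that each copy loses at most, say, $\lceil \Delta(G')/\lambda \rceil + 1 \leq 2\gamma m + 2$ edges at each vertex (an equitable edge-colouring of $G'$ with $\lambda$ colours, available since $G'$ is simple — e.g.\ via a proper edge colouring with $\Delta(G')+1$ colours grouped into $\lambda$ classes, or directly). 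This yields $G = G_1 \uplus \cdots \uplus G_\lambda$ where each $G_i$ is a simple graph of order $m$ with $\delta(G_i) \geq (1 - 3\gamma)m$.

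The second stage is to correct the divisibility of each $G_i$ and then apply Theorem~\ref{T:denseSimpleFDecomps} to each piece. The $G_i$ need not individually be $F$-divisible even though $G$ is, so I would move a small simple subgraph between consecutive pieces $G_i, G_{i+1}$ to fix degrees modulo $\gcd(F)$ and then fix $|E(G_i)|$ modulo $|E(F)|$; this is exactly the kind of adjustment Lemma~\ref{L:KanTok} is designed for, since it lets us pull out a subgraph with prescribed small degrees (bounded by $\gcd(F) \cdot |E(F)|$, say) at every vertex, provided the relevant degree sum is even and $\delta$ is large — both of which hold here since $\delta(G_i) \geq (1-3\gamma)m \geq \frac{s}{s+1}m$ for $m$ large and we may adjust a parity vertex if needed. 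After at most $\lambda - 1$ such transfers we arrange that each resulting simple graph $G_i'$ is $F$-divisible with $\delta(G_i') \geq (1 - \gamma(F))m$, where we choose $\gamma = \gamma(\lambda, F)$ small enough (e.g.\ $\gamma \leq \gamma(F)/4$ with $\gamma(F)$ from Theorem~\ref{T:denseSimpleFDecomps}) to absorb all the losses. Then Theorem~\ref{T:denseSimpleFDecomps} gives an $F$-decomposition of each $G_i'$, and the union of these decompositions is an $F$-decomposition of $G$.

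The main obstacle is the bookkeeping in stage two: ensuring that the divisibility corrections can be carried out simultaneously across all $\lambda$ pieces while keeping every minimum degree above the threshold, and in particular handling the parity condition in Lemma~\ref{L:KanTok} (one may need a designated ``sink'' vertex at which degrees are allowed to float, then clean it up at the end). The splitting in stage one is also slightly delicate because we want the bound $\delta(G_i) \geq (1 - O(\gamma))m$ to hold with a constant independent of $\lambda$; the key point making this work is precisely that $G'$ is \emph{simple}, so its maximum degree is $O(\gamma \lambda m)$ rather than $O(\gamma \lambda^2 m)$, and dividing by $\lambda$ returns an $O(\gamma m)$ loss per piece.
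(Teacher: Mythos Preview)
There is a genuine error at the foundation of your plan: the $\lambda$-defect $G'$ is \emph{not} in general simple. By your own formula $\codeg_{G'}(x,y)=\lambda-\codeg_G(x,y)$, which can take any value in $\{0,1,\ldots,\lambda\}$; nothing in the hypothesis $\delta(G)\geq(1-\gamma)\lambda m$ prevents some pairs from having $\codeg_G(x,y)=0$ and hence $\codeg_{G'}(x,y)=\lambda$. The degree bound $\Delta(G')\leq\gamma\lambda m$ is correct, but it does not imply simplicity, and you explicitly flag simplicity as ``the key point making this work''. Concretely, your proposed splitting via a proper edge colouring with $\Delta(G')+1$ colours need not exist for a multigraph (Vizing gives $\Delta+\mu$ colours with $\mu\leq\lambda$), and even when a proper colouring exists, grouping colour classes into $\lambda$ bundles can place parallel edges in the same bundle, so the resulting $G'_i$ need not be simple. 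A balanced decomposition of $G'$ into $\lambda$ simple graphs does exist (de~Werra-type equitable edge colouring results give it), but that is a substantive extra ingredient you have not invoked. A second, smaller gap: in stage two, transferring a subgraph from $G_i$ into $G_{i+1}$ must not create repeated edges in $G_{i+1}$, so Lemma~\ref{L:KanTok} has to be applied inside $G_i$ restricted to the pairs absent from $G_{i+1}$; this is not hard but is not addressed.

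The paper sidesteps both issues by inducting on $\lambda$ and peeling off one layer at a time. The simple graph it extracts is not the defect but the ``full-multiplicity'' graph $C$ with $xy\in E(C)$ iff $\codeg_G(x,y)=\lambda$; this \emph{is} simple, and $\delta(C)\geq m-1-\gamma\lambda m$ since at most $\Delta(G')\leq\gamma\lambda m$ neighbours of any vertex can have deficient codegree. Lemma~\ref{L:KanTok} is applied once inside $C$ to fix degrees modulo $s=2|E(F)|\gcd(F)$; the resulting layer $C\uplus C'$ is only $2$-linear, so a short greedy argument removes one copy of $F$ through each of the $O(m)$ repeated edges before Theorem~\ref{T:denseSimpleFDecomps} is applied. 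What remains, $G-(C\uplus C')$, is $(\lambda-1)$-linear and is handled by induction. This avoids any need for an all-at-once equitable splitting of a multigraph.
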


\begin{proof}
Fix a graph $F$ and let $s=2|E(F)|\gcd(F)$. We may assume $F$ has no isolated vertices. We prove the result by induction on $\lambda$. By Theorem~\ref{T:denseSimpleFDecomps} there is a positive constant $\gamma_1 < \frac{1}{s}$ and integer $M_1$ such that, for all integers $m>M_1$, each $F$-divisible linear multigraph of order $m$ with minimum degree at least $(1-\gamma_1) m$ has an $F$-decomposition. So the result holds for $\lambda=1$. Now fix a $\lambda \geq 2$ and suppose that there is a positive constant $\gamma_{\l-1}$ and integer $M_{\l-1}$ such that, for all integers $m>M_{\l-1}$, each $F$-divisible $(\l-1)$-linear multigraph of order $m$ with minimum degree at least $(1-\gamma_{\l-1})(\l-1) m$ has an $F$-decomposition. Let $\gamma_\l=\min\{\frac{1}{2\l}\gamma_1,\frac{1}{2}\gamma_{\l-1}\}$. We will show that for sufficiently large $m$ each $F$-divisible $\lambda$-linear multigraph of order $m$ with minimum degree at least $(1-\gamma_\l)\l m$ has an $F$-decomposition. For the rest of the proof we tacitly assume that $m$ is large whenever required.

Let $G$ be an $F$-divisible $\l$-linear multigraph of order $m$ with $\delta(G) \geq (1-\gamma_{\l})\l m$ and let $V=V(G)$. Let $C$ be the spanning subgraph of $G$ such that, for all distinct $x,y \in V$, $\codeg_{C}(x,y)=1$ if $\codeg_{G}(x,y) = \l$ and $\codeg_{C}(x,y)=0$ otherwise. Note that the $\l$-defect of $G$ has maximum degree at most $\gamma_{\l}\l m-\l$ and hence
\begin{equation}\label{E:minDegC}
\delta(C) \geq (m-1)-(\gamma_{\l}\l m-\l) > \left(1-\gamma_{\l}\l\right) m \geq \left(1-\tfrac{1}{2}\gamma_1\right) m
\end{equation}
where the last inequality follows because $\gamma_\l \leq \frac{1}{2\l}\gamma_1$.

For each $x \in V$, let $a_x$ be the least positive integer such that $\deg_{C}(x)+a_x \equiv 0 \mod{s}$ and note that $\sum_{x \in V}a_x$ is even because $s$ is even. So there is a subgraph $C'$ of $C$ such that $\deg_{C'}(x) =a_x$ for each $x \in V$ by Lemma~\ref{L:KanTok} because $(s+1)^2=O(1)$ and $\delta(C) \geq (1-\frac{1}{2}\gamma_1) m > \frac{s}{s+1}m$. Let $G'$ be the $2$-linear multigraph $C \uplus C'$ and note that $G'$ is a submultigraph of $G$ by the definition of $C$. We will complete the proof by showing that both $G'$ and $G-G'$ have $F$-decompositions. Observe that $G'$ and $G-G'$ are $F$-divisible because $G$ is $F$-divisible and $\deg_{G'}(x) \equiv 0 \mod{s}$ for each $x \in V$.

We first show that $G-G'$ has an $F$-decomposition. By the definitions of $G'$ and $C$, we have that $G-G'$ is $(\l-1)$-linear. Also, $\delta(G-C) \geq \frac{\l-1}{\l}\delta(G) \geq (1-\gamma_\l)(\l-1) m$ and thus, because $\Delta(C') \leq s = O(1)$ and $\gamma_\l \leq \frac{1}{2}\gamma_{\l-1}$, we have
\[\delta(G-G') \geq (1-\gamma_\l)(\l-1) m-O(1) >  (1-\gamma_{\l-1})(\l-1) m.\]
So $G-G'$ has an $F$-decomposition by the definition of $\gamma_{\l-1}$.

It remains to show that $G'$ has an $F$-decomposition. We begin by deleting copies of $F$ from $G'$ until we obtain a (simple) graph. Let $G'_0=G'$ and $t=|E(C')|$ and note that $t \leq \frac{1}{2}ms$ since $a_x \leq s$ for each $x \in V$. We define a sequence $G'_0,\ldots,G'_t$ of multigraphs by iterating the following procedure for $i=0,1,\ldots,t-1$.
\begin{itemize}
    \item[(1)]
If $G'_{i}$ is a (simple) graph then let $G'_{i+1}=G'_i$.
    \item[(2)]
Otherwise, choose any pair of distinct vertices $y,z \in V$ such that $\codeg_{G'_i}(y,z)=2$. Let $G'_{i+1}=G'_i-F_i$ for some copy $F_i$ of $F$ in $G'_i$ such that $yz \in E(F_i)$, and $V(F_i) \setminus \{y,z\} \subseteq S_{i}$ where $S_i=\{x \in V \setminus \{y,z\}:|N_{G'_i}(x)| \geq (1-\frac{3}{4}\gamma_1)m\}$.
\end{itemize}
To see that we can indeed define such a sequence, suppose we have successfully defined $G'_i$ for some $i \in \{0,\ldots,t-1\}$. Because $|N_{G'}(x)| = \deg_C(x) > (1-\frac{1}{2}\gamma_1) m$ for each $x \in V$ and $G'_i$ was obtained from $G'$ by deleting at most $t=O(m)$ copies of $F$, it can be seen that the size of the set $S_i$ defined in (2) is $m-O(1)$. Thus, by the definition of $S_i$ and because $\gamma_1<\frac{1}{s} \leq \frac{1}{|V(F)|}$, there is a set of $|V(F)|-2$ vertices in $S_i$ that are pairwise adjacent in $G'_i$ and hence a suitable choice of graph $F_i$ exists. So we can define the sequence $G'_0,\ldots,G'_t$ and it suffices to show that $G'_t$ has an $F$-decomposition.

Note that $G'_t$ is $F$-divisible since $G'$ was. Also, $G'_t$ is a (simple) graph by the definition of $G'_0,\ldots,G'_t$ and the fact that $t = |E(C')|$, Finally, by the definitions of $G'_0,\ldots,G'_t$ and $S_0,\ldots,S_{t-1}$, we have that
\[\delta(G'_t) \geq (1-\tfrac{3}{4}\gamma_1)m - (\Delta(C')+1)\Delta(F) = (1-\tfrac{3}{4}\gamma_1)m - O(1) > (1-\gamma_1) m.\]
So $G'_t$ has an $F$-decomposition by the definition of $\gamma_1$ and the proof is complete.
\end{proof}

We can easily extend this result to cover cases where $\l$ is not fixed by strengthening the condition on minimum degree to a condition on minimum $\l$-neighbourhood size. This will be sufficient for our purposes.

\begin{lemma}\label{T:denseFDecompsArbLambda}
Let $F$ be a graph. There is a positive constant $\gamma=\gamma(F)$ and an integer $M=M(F)$ such that, for all integers $m>M$ and $\lambda \geq 1$, each $F$-divisible $\l$-linear multigraph $G$ of order $m$ such that $|N^\l_G(x)| \geq (1-\gamma) m$ for each $x \in V(G)$ has an $F$-decomposition.
\end{lemma}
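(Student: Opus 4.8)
The plan is to prove the lemma by induction on $\lambda$, peeling one ``layer'' off $G$ at a time while keeping the constants $\gamma$ and $M$ \emph{fixed} — the point being that the hypothesis on $|N^\l_G(x)|$, unlike a bare minimum-degree hypothesis, survives such peeling without loss. Let $s=2|E(F)|\gcd(F)$, let $\gamma_0=\gamma(s,F)$ and $M_0=M(s,F)$ be the constant and threshold from Lemma~\ref{T:denseFDecompsConstLambda} (by its recursive construction $\gamma(\cdot,F)$ is non-increasing and $M(\cdot,F)$ is non-decreasing, so $\gamma_0$ and $M_0$ also serve for every $\lambda\le s$), set $\gamma$ to be a small enough constant multiple of $\gamma_0$ and $\gamma(F)$ (the constant of Theorem~\ref{T:denseSimpleFDecomps}), and take $M$ to exceed $M_0$, $M(F)$, and the thresholds implicit below. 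For the base of the induction, if $\lambda\le s$ then $\delta(G)\ge\lambda|N^\l_G(x)|\ge(1-\gamma)\lambda m$, so $G$ has an $F$-decomposition by Lemma~\ref{T:denseFDecompsConstLambda} (and when $\lambda=1$, $G$ is simple and Theorem~\ref{T:denseSimpleFDecomps} alone suffices).

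For the inductive step, suppose $\lambda>s$. Let $C$ be the simple \emph{tight graph} of $G$, with $xy\in E(C)$ exactly when $\codeg_G(x,y)=\lambda$, so that $\delta(C)\ge(1-\gamma)m$ and $N^\l_G(x)=N_C(x)$ for each $x$. The key claim is that there is a simple $F$-divisible graph $J$ on $V(G)$ with $E(C)\subseteq E(J)\subseteq\{xy:\codeg_G(x,y)\ge 1\}$. Granting this, $J-C$ has maximum degree $O_F(1)$, so $\delta(J)\ge(1-\gamma)m-O_F(1)$ and $J$ has an $F$-decomposition by Theorem~\ref{T:denseSimpleFDecomps}; moreover $J\le G$ as multigraphs and $G-J$ is $(\lambda-1)$-linear, $F$-divisible, and satisfies $N^{\lambda-1}_{G-J}(x)\supseteq N^\l_G(x)$ (every pair of codegree $\lambda$ in $G$ lies in $E(C)\subseteq E(J)$, hence drops to codegree exactly $\lambda-1$), so that $|N^{\lambda-1}_{G-J}(x)|\ge(1-\gamma)m$ with the \emph{same} $\gamma$. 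Thus $G-J$ has an $F$-decomposition by the inductive hypothesis, and combining it with the $F$-decomposition of $J$ completes the step. Since $\gamma$ and $M$ are never degraded, this induction closes for arbitrary $\lambda$, and in particular reaches the bounded base case.

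Everything therefore comes down to producing the graph $J$. Since $\{xy:\codeg_G(x,y)\ge 1\}$ is the disjoint union of $E(C)$ and the ``non-tight support'' $D^{*}=\{xy:1\le\codeg_G(x,y)\le\lambda-1\}$, any such $J$ has the form $C\cup R$ with $R\subseteq D^{*}$. The $F$-divisibility of $J$ forces $\deg_R(x)\equiv-\deg_C(x)\pmod{\gcd(F)}$, so one wants a subgraph $R$ of $D^{*}$ realising the small prescribed degrees $a_x:=(-\deg_C(x))\bmod\gcd(F)$, together with a bounded adjustment to arrange $|E(F)|\mid|E(J)|$; the first task is exactly the type handled by Lemma~\ref{L:KanTok} (and the $F$-divisibility of $G$, via $\gcd(F)\mid\deg_G(x)=\lambda\deg_C(x)+\sum_{xy\in D^{*}}\codeg_G(x,y)$, controls these residues). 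I expect this construction of $J$ to be the main obstacle, and the genuinely delicate point is that a vertex tight to \emph{every} other vertex has $\deg_{D^{*}}(x)=0$ and $\deg_C(x)=m-1$, which makes $a_x=0$ — and hence $R$ realisable at $x$ — only when $\gcd(F)\mid m-1$. When this fails, $G$ is close to a complete multigraph, and I would handle that regime separately: either first strip $O_F(1)$ copies of $F$ through such vertices to create enough non-tight edges at each of them (spread out so that no $|N^\l_G(x)|$ drops by more than $O_F(1)$), or, in the extreme where $G$ is essentially $\lambda K_m$, write its complete ``core'' as $\lfloor\lambda/s'\rfloor$ copies of a single $F$-divisible $s'$-fold complete-ish multigraph with $s'=O_F(1)$ together with a bounded leftover, each piece being handled by Lemma~\ref{T:denseFDecompsConstLambda}. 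Once the near-complete case is dealt with, the clean ``peel one layer'' induction above carries the rest.
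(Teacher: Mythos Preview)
Your overall induction plan, and in particular the key observation that the $\l$-neighbourhood hypothesis survives peeling with no loss in the constant, are correct and match the paper. But you have a genuine gap: the construction of the simple $F$-divisible graph $J$ with $C\subseteq J$ is, as you yourself flag, unfinished. The obstacle you identify---a vertex $x$ tight to every other vertex has $\deg_{D^*}(x)=0$, so there is nowhere to place the correcting edges of $R$ at $x$ when $\gcd(F)\nmid m-1$---is real, and neither of your proposed workarounds is actually carried out. Stripping $O_F(1)$ copies of $F$ through each fully-tight vertex must be done so that no single pair loses more than one unit of codegree (else the $\l$-neighbourhood bound degrades), and your alternative ``write the core as copies of an $s'$-fold complete multigraph'' introduces its own divisibility issues. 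So as it stands the argument is incomplete in precisely the place you predicted.

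The paper avoids the whole difficulty with one clean change: peel $s$ layers at once rather than one. With $s=2|E(F)|\gcd(F)$, define $G''$ on $V(G)$ by $\codeg_{G''}(x,y)=s$ if $\codeg_G(x,y)>s$ and $\codeg_{G''}(x,y)=0$ otherwise. Then every vertex degree in $G''$ is a multiple of $s$, so $G''$ is \emph{automatically} $F$-divisible---no graph $J$ needs to be built and no residue adjustment is required. Moreover $G''$ is $s$-linear with $\delta(G'')\geq s\,|N^\l_G(x)|\geq(1-\gamma)sm$, so Lemma~\ref{T:denseFDecompsConstLambda} (applied with index $s$) decomposes it directly. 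The remainder $G-G''$ is $(\l-s)$-linear (this uses $\l-s>s$, so one takes the base case up to $\l\leq 2s$ rather than $\l\leq s$), is $F$-divisible, and satisfies $|N^{\l-s}_{G-G''}(x)|=|N^\l_G(x)|$ exactly, so the induction closes with the same $\gamma$ and no separate near-complete regime. This is the missing idea: peeling in multiples of $s$ makes the divisibility of the peeled layer come for free.
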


\begin{proof}
Fix a graph $F$ and let $s=2|E(F)|\gcd(F)$. For each $i \in \{1,\ldots,2s\}$, by Lemma~\ref{T:denseFDecompsConstLambda} there is a positive constant $\gamma_i$ and integer $M_i$ such that, for all integers $m>M_i$, each $F$-divisible $i$-linear multigraph of order $m$ with minimum degree at least $(1-\gamma_i)i m$ has an $F$-decomposition. Let $\gamma=\gamma(F)=\min\{\gamma_1,\ldots,\gamma_{2s}\}$ and $M=M(F)=\max\{M_1,\ldots,M_{2s}\}$. We will show by induction on $\l$ that the result holds with this choice of $\gamma$ and $M$ for all integers $\lambda \geq 1$. Because $\deg_G(x) \geq \l|N^\l_G(x)|$ for any graph $G$ and each $x \in V(G)$, the result holds when $\l \in \{1,\ldots,2s\}$ by our choice of $\gamma$ and $M$.

Fix a positive integer $\l \geq 2s+1$ and suppose that, for each $i \in \{1,\ldots,\l-1\}$, each $F$-divisible $i$-linear multigraph of order $m>M$ with minimum $i$-neighbourhood size at least $(1-\gamma) m$ has an $F$-decomposition. Let $G$ be an $F$-divisible $\l$-linear multigraph of order $m>M$ such that $|N^{\l}_G(x)| \geq (1-\gamma) m$ for each $x \in V(G)$. Let $G''$ be the $s$-linear submultigraph of $G$ with vertex set $V(G)$ and, for all distinct $x,y \in V(G)$, $\codeg_{G''}(x,y)=s$ if $\codeg_{G}(x,y)>s$ and $\codeg_{G}(x,y)=0$ otherwise. Now $G''$ is $F$-divisible by our choice of $s$ and has $\delta(G'') \geq (1-\gamma)s m$ because $\deg_{G''}(x) \geq s |N^{\l}_G(x)|$ for each $x \in V(G)$. So $G''$ has an $F$-decomposition by our definitions of $\gamma$ and $M$.  Let $G'=G-G''$, let $\l'=\l-s$ and note that $\l'>s$. We have that $G'$ is $F$-divisible because both $G$ and $G''$ are. Also, $G'$ is $\l'$-linear by the definition of $G''$ and because $\l'>s$. Further, for each $x \in V(G)$, $|N^{\l'}_{G'}(x)| = |N^{\l}_{G}(x)| \geq (1-\gamma) m$. So $G'$ has an $F$-decomposition by our inductive hypothesis. Thus $G$ has an $F$-decomposition because both $G'$ and $G''$ do.
\end{proof}

Most frequently, we will apply Lemma~\ref{T:denseFDecompsArbLambda} with $F=K_\ell$ to obtain an $\ell$-uniform hypergraph with a specified underlying multigraph $G$. Note that the condition that $G$ is $K_\ell$-divisible is equivalent to $|E(G)| \equiv 0 \mod{\binom{\ell}{2}}$ and $\deg_G(x) \equiv 0 \mod{\ell-1}$ for each $x \in V(G)$.

The other key ingredient in our proof of Theorem~\ref{T:boundAchieved} is a result which allows us to construct an $\ell$-uniform hypergraph $H$ with a specified degree sequence such that the union of $H$ with a given multigraph is $\l$-linear. The problem of whether there exists an $\ell$-uniform $\l$-linear hypergraph with a specified degree sequence is hard in general, but it becomes easy when the total degree is large compared to the maximum degree.

\begin{lemma}\label{L:graphRealisation}
Let $\l \geq 1$ and $\ell \geq 2$ be integers, let $V$ be a set of vertices, let $A$ be a $\l$-linear multigraph such that $V(A) \subseteq V$ and, for each $x \in V$, let $r_x$ be a nonnegative integer. There is an $\ell$-uniform hypergraph $H$ on vertex set $V$ such that $H \uplus A$ is $\l$-linear and $\deg_H(x)=r_x$ for each $x \in V$, if
\begin{itemize}
    \item[\textup{(i)}]
$\sum_{x \in V}r_x = \ell n^\dag$ for some positive integer $n^\dag$; and
    \item[\textup{(ii)}]
$n^\dag \geq 2qr$ where $q=1+\lfloor\frac{1}{\l}\max\{r_x(\ell-1)+\deg_A(x):x\in V^+\}\rfloor$, $V^+=\{x \in V:r_x >0\}$, and $r=\max\{r_x:x\in V\}$.
\end{itemize}
\end{lemma}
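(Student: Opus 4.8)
The plan is to build $H$ as the union $H=H^{(1)}\uplus\cdots\uplus H^{(R)}$ of $R=2r$ \emph{rounds}, where inside a single round every vertex lies in at most one edge, and to arrange that each $x$ lies in an edge of exactly $r_x$ of the rounds; condition~(i) then forces $|E(H)|=\tfrac1\ell\sum_x r_x=n^\dag$ and $\deg_H(x)=r_x$ for each $x$. The observation that controls $\l$-linearity is a crude degree count: for any vertex $x$ and any subhypergraph $J$ arising during the construction, $\sum_{y\neq x}\codeg_{J\uplus A}(x,y)=\deg_J(x)(\ell-1)+\deg_A(x)\le r_x(\ell-1)+\deg_A(x)<\l q$ by the definition of $q$ (for $x\in V^+$; vertices outside $V^+$ never lie in an edge of $H$). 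Hence strictly fewer than $q$ vertices $y$ satisfy $\codeg_{J\uplus A}(x,y)\ge\l$. So if $J$ is the hypergraph built before round $k$ and $W_k$ is the set of vertices active in round $k$, the \emph{conflict graph} $\Phi_k$ on $W_k$ — with $x\sim y$ exactly when $\codeg_{J\uplus A}(x,y)\ge\l$ — has $\Delta(\Phi_k)\le q-1$.

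Within round $k$ I would take the edges to be the classes of a partition of $W_k$ into independent $\ell$-sets of $\Phi_k$; equivalently, a spanning family of vertex-disjoint copies of $K_\ell$ in $\overline{\Phi_k}[W_k]$. Any such choice makes a vertex lie in exactly one new edge of round $k$ precisely when it is active in round $k$, so the degree conditions come out right; and each edge of round $k$, being an independent set of $\Phi_k$, raises the codegree of any of its pairs by at most one, from at most $\l-1$ to at most $\l$, so the running union with $A$ stays $\l$-linear. Such a $K_\ell$-factor exists as soon as $\ell\mid|W_k|$ and $|W_k|\ge\ell q$: then $\delta(\overline{\Phi_k}[W_k])\ge|W_k|-1-(q-1)\ge(1-\tfrac1\ell)|W_k|$, and one may invoke a standard clique-factor result (the Hajnal--Szemer\'edi theorem, equivalently an equitable $\tfrac{|W_k|}{\ell}$-colouring of $\Phi_k$).

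What remains, and is the part needing the most care, is the bookkeeping that selects the rounds: one must choose, for each $x\in V^+$, a set $T_x$ of $r_x$ rounds so that each column sum $c_k=|\{x:k\in T_x\}|$ is a multiple of $\ell$ and is at least $\ell q$, with $\sum_k c_k=\sum_x r_x=\ell n^\dag$. This is where $n^\dag\ge 2qr$ is used. Writing $c_k=\ell q+\ell d_k$, the requirement $\sum_k d_k=n^\dag-2qr$ is satisfiable by nonnegative integers, and spreading the $d_k$ as evenly as possible keeps the profile $(c_k)$ flat enough that $\max_k c_k\le \ell+\tfrac{\ell n^\dag}{2r}$; since the conjugate partition $(r^*_s)_{s\le r}$ of $(r_x)_{x\in V^+}$ has $\sum_s r^*_s=\ell n^\dag$ with $r^*_r\ge 1$, and since $n^\dag\ge 2r$, one checks $\sum_{k\le t}c_k^{\downarrow}\le \tfrac{t}{r}\ell n^\dag\le\sum_{s\le t}r^*_s$ for every $t$, so $(c_k)$ is dominated by the conjugate of $(r_x)$ and the $T_x$ exist by the Gale--Ryser theorem; simultaneously $|W_k|=c_k\ge\ell q$ and $\ell\mid|W_k|$, matching the hypotheses of the previous paragraph. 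With the $T_x$ fixed, processing the $R$ rounds in order as above produces the required $H$. The genuinely delicate point is this interlocking of divisibility and density — that one choice of $R$ and $(c_k)$ at once makes the demand split realisable and each round's conflict-graph complement dense enough for a $K_\ell$-factor — and it is exactly the constant $2qr$ in (ii) that makes it fit.
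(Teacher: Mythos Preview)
Your argument is correct, but it follows a genuinely different route from the paper's.  The paper first invokes Gale--Ryser to produce \emph{any} $\ell$-uniform hypergraph $M$ with the prescribed degree sequence, ignoring linearity, and then repairs codegree violations one at a time by a switching argument: given a pair $u,v$ with $\codeg_{M\uplus A}(u,v)>\l$, it locates an edge $E_{uv}\ni u,v$ and a second edge $E_w$ disjoint from the $\l$-neighbourhood of $u$ and containing a vertex $w$ outside the $\l$-neighbourhoods of $E_{uv}\setminus\{u\}$, then swaps $u$ and $w$ between the two edges.  The hypothesis $n^\dag\ge 2qr$ enters exactly once, to guarantee that the two ``forbidden'' edge sets $S_1,S_2$ (each of size at most $qr$) do not exhaust $E(M)$.

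Your construction instead builds $H$ from scratch in $2r$ rounds, using Gale--Ryser in the opposite direction (to assign vertices to rounds rather than to edges) and then the Hajnal--Szemer\'edi theorem to find a $K_\ell$-factor in the complement of each round's conflict graph.  The bound $n^\dag\ge 2qr$ now does double duty: it makes the column sums $c_k\ge\ell q$ attainable (so that each conflict-complement is dense enough for a factor) and, via $n^\dag\ge 2r$, keeps the $c_k$ flat enough to be majorised by the conjugate of $(r_x)$.  Your approach is more structured and perhaps more transparent, but it imports a substantially heavier tool; the paper's switching argument is self-contained and elementary.  Either way the constant $2qr$ falls out naturally, which is reassuring.
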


\begin{proof}
Because we can add isolated vertices as required to the final hypergraph produced, we may assume that $V=V^+$. We first show that there exists a (not necessarily $\l$-linear) $\ell$-uniform hypergraph $M$ on vertex set $V$ such that $\deg_M(x)=r_x$ for each $x \in V$. Note that such a hypergraph will have $n^\dag$ edges and that $n^\dag > r$ by (ii). So, using the definition of $r$, we have that $\sum_{x \in V'}r_x \leq r|V'| < n^\dag|V'|$ for any subset $V'$ of $V$. Thus, by the well-known Gale-Ryser theorem (see \cite[Theorem 4.3.18]{Wes} for example), there is a bipartite graph $B$ with parts $U$ and $V$ where $|U|=n^\dag$, $\deg_B(x)=\ell$ for each $x \in U$ and $\deg_B(x)=r_x$ for each $x \in V$. Then the hypergraph $M$ on vertex set $V$ with edge multiset $\{N_B(x):x \in U\}$ is $\ell$-uniform and has $\deg_M(x)=r_x$ for each $x \in V$.

Now consider the hypergraph $M \uplus A$. For each $x \in V$, we use $N^\l_{M \uplus A}[x]$ to denote $\{x\} \cup N^\l_{M \uplus A}(x)$ and note that $|N^\l_{M \uplus A}[x]| \leq q$ for each $x \in V$. If $M \uplus A$ is $\l$-linear, then we are done. Otherwise, let $u$ and $v$ be vertices in $V$ such that $\codeg_{M \uplus A}(u,v) \geq \l+1$. We will show we can modify $M$ to obtain an $\ell$-uniform hypergraph $M'$ such that $\deg_{M'}(x)=\deg_{M}(x)$ for each $x \in V$,  $\codeg_{M' \uplus A}(u,v)=\codeg_{M \uplus A}(u,v)-1$, and $\codeg_{M' \uplus A}(x,y) \leq \max\{\codeg_{M \uplus A}(x,y),\l\}$ for all distinct $x,y \in V$. This will suffice to complete the proof since by repeating this procedure we will eventually obtain a $\l$-linear hypergraph $H$ with the required properties.

Let $E_{uv}$ be an edge of $M$ such that $\{u,v\} \subseteq E_{uv}$. Such an $E_{uv}$ exists because $\codeg_{M \uplus A}(u,v) \geq \l+1$ yet $A$ is $\l$-linear. Let
\begin{align*}
  S_1 &= \{E \in E(M): E \cap N^\l_{M \uplus A}[u] \neq \emptyset\} \\
  S_2 &= \{E \in E(M): E \subseteq \textstyle{\bigcup_{y \in E_{uv} \setminus \{u\}}N^\l_{M \uplus A}[y]}\}.
\end{align*}
Observe that $|S_1| \leq qr$ because $|N^\l_{M \uplus A}[u]| \leq q$. Also $\bigcup_{y \in E_{uv} \setminus \{u\}}N^\l_{M \uplus A}[y] \leq (\ell-1)q$ and hence $|S_2| \leq \frac{\ell-1}{\ell}qr < qr$. Thus, by (ii), there is an edge of $M$ that is in neither $S_1$ nor $S_2$. Since this edge is not in $S_2$, it contains a vertex not in $\bigcup_{y \in E_{uv} \setminus \{u\}}N^\l_{M \uplus A}[y]$. Call the vertex $w$ and the edge $E_w$. Note that $u,v \notin E_w$ since $E_w \notin S_1$. Also, $w \notin E_{uv}$ by our choice of $w$.

Now, let $M'$ be the hypergraph on vertex set $V$ obtained from $M$ by replacing (one copy of) the edges $E_{uv}$ and $E_w$ with
(one copy of) the edges $(E_{uv} \setminus \{u\}) \cup \{w\}$ and $(E_w \setminus \{w\}) \cup \{u\}$. Recalling that $w \notin E_{uv}$ and $u \notin E_w$, we have that $M'$ is $\ell$-uniform and that $\deg_{M'}(x)=\deg_{M}(x)$ for each $x \in V$. Furthermore, for all distinct $x,y \in V$,
\[\codeg_{M'}(x,y)=
\left\{
  \begin{array}{ll}
    \codeg_{M}(x,y)-1\  & \hbox{if $\{x,y\} \in \{\{u,z\}: z\in E^*_{uv}\} \cup \{\{w,z\}: z\in E^*_w\}$} \\
    \codeg_{M}(x,y)+1 & \hbox{if $\{x,y\} \in \{\{w,z\}: z\in E^*_{uv}\} \cup \{\{u,z\}: z\in E^*_w\}$} \\
    \codeg_{M}(x,y) & \hbox{otherwise}
  \end{array}
\right.
\]
where $E^*_{uv}=E_{uv} \setminus (E_w \cup \{u\})$ and $E^*_{w}=E_{w} \setminus (E_{uv} \cup \{w\})$. Thus, because $v \in E^*_{uv}$, we have $\codeg_{M' \uplus A}(u,v)=\codeg_{M \uplus A}(u,v)-1$. For each $z \in E^*_{uv}$, we have $\codeg_{M' \uplus A}(w,z) \leq \l$ because $w \notin N^\l_{M \uplus A}[z]$ implies $\codeg_{M \uplus A}(w,z)< \l$. For each $z\in E^*_w$, we have $\codeg_{M' \uplus A}(u,z) \leq \l$ because $E_w \notin S_1$ implies $\codeg_{M \uplus A}(u,z)< \l$. Thus $\codeg_{M' \uplus A}(x,y) \leq \max\{\codeg_{M \uplus A}(x,y),\l\}$ for all distinct $x,y \in V$. So $M'$ has the properties we require and the proof is complete.
\end{proof}

\section{Achieving \texorpdfstring{$\boldsymbol{\lfloor A^k_\l(m,n) \rfloor}$}{A(m,n)}} \label{S:achievingA}

Over the course of this section and the next one, we will prove Theorem~\ref{T:boundAchieved}. For many of our results, we will be operating in a regime in which $k$ is fixed, $m$ is large (as a function of $k$) and $\l$ is arbitrary. We will employ asymptotic notation with respect to this regime.

In this section we show that $Z_{2,\l+1}(m,n)=\lfloor A^k_\l(m,n) \rfloor$ in the cases that Theorem~\ref{T:boundAchieved} specifies. Our first step in this direction is to translate an optimum for $\A_\l(m,n)$ with two nonzero real values given by Lemma~\ref{L:aBoundWithEq} into integer numbers of edges of sizes $k$ and $k+1$ for which we can realise a $\l$-linear hypergraph meeting the total degree bound $\lfloor A^k_\l(m,n) \rfloor$. It turns out that we only need take a ceiling and a floor to accomplish this, as we formalise with the following definitions and Lemma~\ref{L:zBoundFloor}.

Throughout this section we define
\begin{equation}\label{E:nDefn}
n_k=\left\lceil \tfrac{k+1}{2}n-\tfrac{\l}{k}\tbinom{m}{2} \right\rceil \quad\text{and}\quad n_{k+1} = \left\lfloor \tfrac{\l}{k}\tbinom{m}{2}-\tfrac{k-1}{2}n \right\rfloor
\end{equation}
where the values of $\l$, $m$ and $n$ will always be clear from context.  Further, we let $d$ be the element of $\{0,\ldots,k-1\}$ such that $\l\binom{m}{2}-\binom{k}{2}n \equiv d \mod{k}$. This implies that
\begin{equation}\label{E:nkExact}
n_k=\tfrac{k+1}{2}n-\tfrac{\l}{k}\tbinom{m}{2}+\tfrac{d}{k} \quad\text{ and }\quad n_{k+1} = \tfrac{\l}{k}\tbinom{m}{2}-\tfrac{k-1}{2}n -\tfrac{d}{k}.
\end{equation}
Thus, $n_k+n_{k+1}=n$ and, using both equalities in \eqref{E:nkExact},
\begin{equation}\label{E:edgeSumEasy}
\tbinom{k}{2}n_k+\tbinom{k+1}{2}n_{k+1}=\l\tbinom{m}{2}-d.
\end{equation}

The following simple lemma explains the reason for these definitions.

\begin{lemma}\label{L:zBoundFloor}
Let $k$, $\l$, $m$ and $n$ be positive integers. If there is a $\l$-linear hypergraph $H$ of order $m$ with $n_k$ edges of size $k$ and $n_{k+1}$ edges of size $k+1$, then $Z_{2,\l+1}(m,n) = \lfloor A_\l^k(m,n) \rfloor$ and the $\l$-defect of $H$ has $d$ edges.
\end{lemma}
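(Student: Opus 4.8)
The plan is to verify directly that the hypothesised hypergraph $H$ witnesses the claimed equality and then compute the size of its $\l$-defect. First I would observe that $H$ has exactly $n_k + n_{k+1} = n$ edges (using the identity noted just after \eqref{E:nkExact}), so $H$ is a $\l$-linear hypergraph of order $m$ with $n$ edges and hence $Z_{2,\l+1}(m,n) \geq \sum_{x \in V(H)} \deg_H(x) = k n_k + (k+1) n_{k+1}$. Plugging in the exact expressions from \eqref{E:nkExact} for $n_k$ and $n_{k+1}$, this total degree simplifies to $\tfrac{k+1}{2}n + \tfrac{\l}{k}\binom{m}{2} - \tfrac{d}{k} = A^k_\l(m,n) - \tfrac{d}{k}$. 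Since $d \in \{0,\ldots,k-1\}$, the quantity $A^k_\l(m,n) - \tfrac{d}{k}$ is an integer (one checks that $\tfrac{k+1}{2}n + \tfrac{\l}{k}\binom{m}{2}$ has fractional part exactly $\tfrac{d}{k}$, which is precisely what the definition of $d$ together with \eqref{E:nkExact} encodes), and it equals $\lfloor A^k_\l(m,n) \rfloor$ because $0 \leq \tfrac{d}{k} < 1$. Combined with the upper bound $Z_{2,\l+1}(m,n) \leq A^k_\l(m,n)$ from Lemma~\ref{L:aBoundWithEq} (via Lemma~\ref{L:obeyLP}), and the fact that $Z_{2,\l+1}(m,n)$ is an integer at most $A^k_\l(m,n)$ hence at most $\lfloor A^k_\l(m,n)\rfloor$, this pins down $Z_{2,\l+1}(m,n) = \lfloor A^k_\l(m,n) \rfloor$.

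For the claim about the $\l$-defect, I would count codegrees. The $\l$-defect $G$ of $H$ has $\codeg_G(x,y) = \l - \codeg_H(x,y)$ for all distinct $x,y$, so $|E(G)| = \sum_{\{x,y\}} (\l - \codeg_H(x,y)) = \l\binom{m}{2} - \sum_{\{x,y\}} \codeg_H(x,y)$. The total codegree of $H$ is $\sum_{i} \binom{i}{2} n_i = \binom{k}{2} n_k + \binom{k+1}{2} n_{k+1}$, which by \eqref{E:edgeSumEasy} equals $\l\binom{m}{2} - d$. Hence $|E(G)| = d$, as required. (Implicit here is that $G$ is a genuine multigraph, i.e.\ $\codeg_H(x,y) \leq \l$ for all pairs, which is exactly the $\l$-linearity of $H$.)

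This is a short verification rather than a construction, so there is no serious obstacle; the only thing requiring a little care is the bookkeeping that shows $\lfloor A^k_\l(m,n) \rfloor = A^k_\l(m,n) - \tfrac{d}{k}$, i.e.\ matching the ceiling/floor in the definition \eqref{E:nDefn} of $n_k, n_{k+1}$ with the residue $d$ defined via $\l\binom{m}{2} - \binom{k}{2}n \equiv d \pmod k$. I would handle this by noting that $\tfrac{k+1}{2}n - \tfrac{\l}{k}\binom{m}{2} = \tfrac{k+1}{2}n - \tfrac{1}{k}\binom{m}{2}\l$ and that taking its ceiling adds exactly $\tfrac{d}{k}$, which is the content of the first identity in \eqref{E:nkExact}; everything else is immediate substitution.
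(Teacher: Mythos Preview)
Your proposal is correct and follows essentially the same approach as the paper's proof: both invoke Lemmas~\ref{L:obeyLP} and~\ref{L:aBoundWithEq} for the upper bound, compute the total degree of $H$ as $A^k_\l(m,n)-\tfrac{d}{k}$ via \eqref{E:nkExact}, and deduce the defect size from \eqref{E:edgeSumEasy}. Your version is slightly more explicit about the codegree count for the defect, but the argument is the same.
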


\begin{proof}
By Lemmas~\ref{L:obeyLP} and \ref{L:aBoundWithEq} we have $Z_{2,\l+1}(m,n) \leq \lfloor A_\l^k(m,n) \rfloor$. Let $H$ be a $\l$-linear hypergraph of order $m$ with $n_k$ edges of size $k$ and $n_{k+1}$ edges of size $k+1$. Then, using \eqref{E:nkExact}, we can calculate that the total degree of $H$ is $A_\l^k(m,n)-\tfrac{d}{k}$. Thus, because $d \in \{0,\ldots,k-1\}$, the total degree of $H$ is $\lfloor A_\l^k(m,n) \rfloor$. It is clear from \eqref{E:edgeSumEasy} that the $\l$-defect of $H$ has $d$ edges.
\end{proof}

In view of Lemma~\ref{L:zBoundFloor}, in the remainder of this section we only need to show the existence of appropriate $\l$-linear hypergraphs in the situations specified by Theorem~\ref{T:boundAchieved}. We first show that we can obtain these hypergraphs for most of the interval prescribed by Theorem~\ref{T:boundAchieved}. The approach we take to accomplish this has similarities to one used in \cite{CarYus}.

\begin{lemma}\label{L:middleRange}
Let $k \geq 2$ be a fixed integer and $\e>0$ be a fixed constant. For each sufficiently large integer $m$, we have $Z_{2,\l+1}(m,n) = \lfloor A_\l^k(m,n) \rfloor$ for all positive integers $\l$ and $n$ such that \[(1+\e)\l\tbinom{m}{2}/\tbinom{k+1}{2} \leq n \leq (1-\e)\l\tbinom{m}{2}/\tbinom{k}{2}.\]
\end{lemma}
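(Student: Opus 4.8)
By Lemma~\ref{L:zBoundFloor}, it suffices to construct, for each admissible triple $(\l,m,n)$, a $\l$-linear hypergraph $H$ of order $m$ with exactly $n_k$ edges of size $k$ and $n_{k+1}$ edges of size $k+1$, where $n_k$ and $n_{k+1}$ are as defined in \eqref{E:nDefn}. The strategy is to build $H$ in two stages: first lay down a bulk of $(k{+}1)$-edges whose underlying multigraph is ``almost'' $\mu K_m$ for a suitable $\mu$, then fill in the remaining codegree with $k$-edges using Lemma~\ref{L:graphRealisation}. More precisely, I would choose an integer $\mu$ of order $\Theta(\l/m)$ --- roughly $\mu \approx 2\binom{k+1}{2}n_{k+1}/m^2$, the average codegree forced by the $(k{+}1)$-edges --- and first produce a $K_{k+1}$-decomposition of a multigraph $G_1$ that is close to $\mu K_m$. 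To do this, note $\mu K_m$ need not be $K_{k+1}$-divisible, so I would delete from $\mu K_m$ a sparse ``corrector'' graph $R$ of bounded maximum degree (a union of a few near-perfect matchings and a short path, chosen so that $\deg_{G_1}(x)\equiv 0\bmod k$ for all $x$ and $|E(G_1)|\equiv 0\bmod\binom{k+1}{2}$); since $\e>0$ is fixed and $m$ is large, $G_1=\mu K_m - R$ has every $(k{+}1)$-neighbourhood of size $m-O(1)\geq(1-\gamma(K_{k+1}))m$, so Lemma~\ref{T:denseFDecompsArbLambda} with $F=K_{k+1}$ yields a $(k{+}1)$-uniform hypergraph $H_1$ with underlying multigraph $G_1$ and hence exactly $|E(G_1)|/\binom{k+1}{2}$ edges. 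I would arrange the deletions so that this edge count equals exactly $n_{k+1}$.

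\textbf{Second stage.} Having fixed $H_1$ with $n_{k+1}$ edges of size $k+1$, I consider its $\l$-defect $A$, a multigraph on $V$ with $\codeg_A(x,y)=\l-\codeg_{H_1}(x,y)$; because $\mu=\Theta(\l/m)$ and $\mu K_m-R$ was used, every codegree in $H_1$ is $O(\l/m)=o(\l)$, so $A$ is essentially $\l K_m$ and in particular $\deg_A(x)=\l(m-1)-O(\l/m\cdot m)=(1-o(1))\l m$ for every $x$. I now want a $k$-uniform hypergraph $H_2$ with $n_k$ edges such that $H_2\uplus H_1$ is $\l$-linear; equivalently, $H_2$ should use up codegree against the ``budget'' multigraph $A$. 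For this I apply Lemma~\ref{L:graphRealisation} with $\ell=k$, the multigraph $A$, and prescribed degrees $r_x$ chosen to sum to $kn_k$ and to be as close to uniform as possible, so $r=\max r_x = \Theta(\l)$ (since the total degree $kn_k=\Theta(\l m^2)$ is spread over $m$ vertices). Condition (i) of Lemma~\ref{L:graphRealisation} holds by choice of the $r_x$; for condition (ii), $q=1+\lfloor\frac1\l\max_x(r_x(k-1)+\deg_A(x))\rfloor=O(1)$ because $r_x=O(\l)$ and $\deg_A(x)=O(\l m)$ --- wait, $\deg_A(x)=\Theta(\l m)$, so $q=\Theta(m)$; then $2qr=\Theta(\l m)$ while $n^\dag=n_k=\Theta(\l m^2)$, so (ii) holds for large $m$. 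Lemma~\ref{L:graphRealisation} then gives $H_2$, and $H=H_1\uplus H_2$ is the desired $\l$-linear hypergraph, completing the proof via Lemma~\ref{L:zBoundFloor}.

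\textbf{Main obstacles.} The delicate point is the \emph{exact} bookkeeping in the first stage: I need $H_1$ to have precisely $n_{k+1}$ edges and the resulting $\l$-defect $A$ to still be a genuine (nonnegative) multigraph with large degrees, which forces careful choice of $\mu$ and of the corrector $R$. Concretely, one wants $\binom{k+1}{2}n_{k+1}$ to be realised as $|E(G_1)|$ with $G_1$ a subgraph of $\l K_m$ (so that $A$ exists) of maximum codegree at most $\l$; since $n_{k+1}=\frac{\l}{k}\binom{m}{2}-\frac{k-1}{2}n-\frac dk$ and $n\geq(1+\e)\l\binom{m}{2}/\binom{k+1}{2}$, one checks $\binom{k+1}{2}n_{k+1}\leq(1-\e')\l\binom{m}{2}$ for some $\e'>0$, which leaves enough ``room'' in $\l K_m$; the divisibility corrections change this by only $O(m)$, which is absorbed. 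Matching the divisibility conditions of Lemma~\ref{T:denseFDecompsArbLambda} ($\binom{k+1}{2}\mid|E(G_1)|$ and $(k-1)\mid\deg_{G_1}(x)$ — actually $k\mid\deg_{G_1}(x)$ via the $K_{k+1}$-divisibility condition $\deg_G(x)\equiv0\bmod k$) simultaneously with hitting the exact target $n_{k+1}$ requires choosing $R$ from a flexible family (several near-perfect matchings plus one bounded-length path to fix a single residue), and verifying that such an $R$ exists for every residue class is the one genuinely fiddly lemma buried in the argument. The second stage, by contrast, is essentially automatic once the degree bounds $r_x=\Theta(\l)$, $\deg_A(x)=\Theta(\l m)$ are in hand, because the hypothesis $n=\Theta(\l m^2)$ (guaranteed by both inequalities on $n$) makes the total degree quadratically larger than everything else in sight, so Lemma~\ref{L:graphRealisation}(ii) is satisfied with room to spare.
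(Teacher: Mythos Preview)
Your approach has two genuine gaps, one in each stage.

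\textbf{Stage 1 fails for small $\l$.} You claim $\mu=\Theta(\l/m)$, but in fact $\binom{k+1}{2}n_{k+1}=\Theta(\l m^2)$, so $\mu=\Theta(\l)$. More importantly, the lemma must cover $\l=1$ (and all small $\l$): then the only available integer $\mu$ is $\mu=1$, and your corrector $R$ must absorb $|E(R)|=\binom{m}{2}-\binom{k+1}{2}n_{k+1}=\binom{k}{2}n_k+d$, which for $n$ near the upper end of the range is $\Theta(\e m^2)$, not $O(m)$. A graph $R$ with that many edges has $\Delta(R)=\Theta(m)$, so $G_1=\mu K_m-R$ no longer has $\mu$-neighbourhoods of size $(1-\gamma)m$, and Lemma~\ref{T:denseFDecompsArbLambda} does not apply. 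The ``few matchings plus a short path'' corrector you describe simply cannot hit the target $|E(G_1)|=\binom{k+1}{2}n_{k+1}$ when $\l$ is bounded.

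\textbf{Stage 2 fails outright.} You assert $r=\max r_x=\Theta(\l)$, but the total degree $kn_k=\Theta(\l m^2)$ spread over $m$ vertices forces $r_x=\Theta(\l m)$, hence $r=\Theta(\l m)$. With $q=\Theta(m)$ (which you correctly compute), condition~(ii) of Lemma~\ref{L:graphRealisation} reads $n_k\ge 2qr=\Theta(\l m^2)$, the same order as $n_k$ itself; tracking constants one needs roughly $n_k\ge 2k\,n_k$, which is false. Lemma~\ref{L:graphRealisation} is designed for placing a \emph{small} number of edges (with bounded prescribed degrees $r_x$), not for filling in a positive fraction of all codegree; it cannot do the bulk work here.

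The paper sidesteps both issues with a single decomposition step: it takes $F=K_h\sqcup K_k\sqcup K_{k+1}$ for a constant $h\equiv 1\pmod{k(k^2-1)}$ large enough that $K_h$ admits both a $K_k$- and a $K_{k+1}$-decomposition, applies Lemma~\ref{T:denseFDecompsArbLambda} once to decompose $\l K_m-D$ (minus $O(1)$ extra cliques) into $b$ copies of $F$, and then \emph{chooses} for each of the $b$ copies of $K_h$ whether to break it into $k$-cliques or $(k+1)$-cliques. This ``swap'' freedom is exactly what lets one hit the precise pair $(n_k,n_{k+1})$ without needing a graded $\mu$ or a degree-realisation lemma for the bulk.
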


\begin{proof}
Assume that $m$ is a large integer and that $\l$ and $n$ are positive integers satisfying the hypotheses of the lemma.
Observe using \eqref{E:nkExact} that $n_k \geq \e\frac{\l}{k}\binom{m}{2}$ because $n \geq (1+\e)\l\binom{m}{2}/\binom{k+1}{2}$ and that $n_{k+1} \geq \e\frac{\l}{k}\binom{m}{2}-\frac{d}{k}$ because $n \leq (1-\e)\l\binom{m}{2}/\binom{k}{2}$. By Theorem~\ref{T:denseSimpleFDecomps} we can choose a constant $h=h(k)$ such that $h \equiv 1 \mod{k(k^2-1)}$ and $h$ is large enough that $\e\binom{h}{2} \geq 2k$ and $K_h$ has a $K_\ell$-decomposition for each $\ell \in \{k,k+1\}$. Let $F$ be the vertex-disjoint union of one copy of $K_h$, one copy of $K_k$ and one copy of $K_{k+1}$. Our overall strategy is to first decompose $\l K_m$ into a large number, $b$, of copies of $F$ together with a small matching and a small number $n'_\ell$ of copies of $K_\ell$ for each $\ell \in \{k,k+1\}$. We will then further decompose the copies of $F$ in order to produce the required $\l$-linear hypergraph of order $m$ with $n_\ell$ of edges of size $\ell$ for each $\ell \in \{k,k+1\}$. To show this can be done we first define $b$, $n'_k$ and $n'_{k+1}$ along with some other important quantities.
\begin{itemize}
    \item
Let $q_\ell$ be the integer $\binom{h}{2}/\binom{\ell}{2}$ for each $\ell \in \{k,k+1\}$.
    \item
Let $b$ and $e$ be the integers such that
\begin{equation}\label{E:ePartition}
\l\tbinom{m}{2}-d=b\left(\tbinom{h}{2}+\tbinom{k+1}{2}+\tbinom{k}{2}\right)+e
\end{equation}
and $\tbinom{h}{2} \leq e  < 2\tbinom{h}{2}+\tbinom{k+1}{2}+\tbinom{k}{2}$. Observe that $b < \l\binom{m}{2}/\binom{h}{2}-1$ and hence $n_k,n_{k+1} \geq 2b$ by the lower bounds on $n_k$ and $n_{k+1}$ established above and the fact that $\e\binom{h}{2} \geq 2k$.
    \item
Let $b_k$ and $n'_k$ be the integers such that $n_k=b+b_kq_k+n'_k$ and $0 \leq n'_k < q_k$. Note that $b_k \geq 0$ since $n_k \geq 2b$.
    \item
Let $b_{k+1}=b-b_k$ and $n'_{k+1}=(e-n'_k\tbinom{k}{2})/\binom{k+1}{2}$. Observe that $n'_{k+1} \geq 0$ since $e \geq \binom{h}{2}$ and $n'_k\tbinom{k}{2} < q_k\tbinom{k}{2} \leq \binom{h}{2}$.
\end{itemize}
Note that $q_k$, $q_{k+1}$, $e$, $n'_k$ and $n'_{k+1}$ are all $O(1)$. Substituting $b\binom{h}{2}=b_kq_k\binom{k}{2}+b_{k+1}q_{k+1}\binom{k+1}{2}$ and $e=n'_k\binom{k}{2}+n'_{k+1}\binom{k+1}{2}$ in \eqref{E:ePartition} we have
\begin{equation}\label{E:fancyPartition}
\l\tbinom{m}{2}-d=(b+b_kq_k+n'_k)\tbinom{k}{2}+(b+b_{k+1}q_{k+1}+n'_{k+1})\tbinom{k+1}{2}.
\end{equation}
Now, since $n_k=b+b_kq_k+n'_k$, by equating the left side of \eqref{E:edgeSumEasy} with the right side of \eqref{E:fancyPartition} we see that $n_{k+1}=b+b_{k+1}q_{k+1}+n'_{k+1}$. It follows that $n'_{k+1}$ is an integer. Furthermore, $b_{k+1} \geq 0$ because $n_{k+1} \geq 2b$, $b=\Theta(m^2)$ and $n'_{k+1}<e=O(1)$.

Let $V$ be a set of $m$ vertices and let $D$ be a matching with $d$ edges such that $V(D) \subseteq V$. By Lemma~\ref{L:zBoundFloor} it suffices to find a hypergraph with $n_k$ edges of size $k$, $n_{k+1}$ edges of size $k+1$ and underlying multigraph $\l K_V-D$. We obtain this as a union of two hypergraphs $H'$ and $H''$. Let $H'$ be a hypergraph with vertex set $V$ such that $H'$ has $n'_\ell$ edges of size $\ell$ for each $\ell \in \{k,k+1\}$ and the edges of $H' \uplus D$ are pairwise vertex-disjoint (note that $2d+kn'_k+(k+1)n'_{k+1}=O(1)$).

Let $G''$ be the $\l$-defect of $H' \uplus D$ and observe that $|E(G'')|=\l\binom{m}{2}-d-e$ using the definition of $n'_{k+1}$.  Now $\gcd(F)=1$, $|E(G'')|=b|E(F)|$ by \eqref{E:ePartition}, and $|N^\l_{G''}(x)| \geq m-1-k = m-O(1)$ for each $x \in V$. Thus by Lemma~\ref{T:denseFDecompsArbLambda} there is a decomposition of $G''$ into $b$ copies of $F$ and hence a decomposition of $G''$ into $b$ copies of $K_h$, $b$ copies of $K_k$ and $b$ copies of $K_{k+1}$. Let $H''$ be the hypergraph formed by decomposing each of $b_k$ of these copies of $K_h$ into $q_k$ edges of size $k$ and each of the remaining $b_{k+1}$ copies of $K_h$ into $q_{k+1}$ edges of size $k+1$. Then $H''$ has underlying multigraph $G''$, exactly $n_k-n'_k$ edges of size $k$ and exactly $n_{k+1}-n'_{k+1}$ edges of size $k+1$ (recall that $n_\ell=b+b_\ell q_\ell+n'_\ell$ for each $\ell \in \{k,k+1\}$). So $H' \uplus H''$ is a hypergraph with $n_k$ edges of size $k$, $n_{k+1}$ edges of size $k+1$ and underlying multigraph $\l K_V-D$.
\end{proof}

We must work a little harder to obtain the hypergraphs required when $n$ is near the extreme ends of the interval prescribed by Theorem~\ref{T:boundAchieved}. At the lower end of this interval $n_k$ is small while $n_{k+1}$ is large and so we first construct a $k$-uniform hypergraph with $n_{k}$ edges and small maximum neighbourhood size, and then use Lemma~\ref{T:denseFDecompsArbLambda} to obtain a $(k+1)$-uniform hypergraph with $n_{k+1}$ edges whose union with the first hypergraph is $\l$-linear. Constructing the first of these hypergraphs is a two step process involving first applying Lemma~\ref{T:denseFDecompsArbLambda} to place all but approximately $\frac{\beta m}{k}$ of the edges on a small number of vertices and then using Lemma~\ref{L:graphRealisation} to carefully distribute the rest of the edges across the vertices. At the upper end of the interval the situation is reversed: $n_{k+1}$ is small while $n_k$ is large and we employ a similar strategy with the roles of edges of size $k$ and edges of size $k+1$ exchanged.

\begin{lemma}\label{L:aBoundAchieved}
Let $k \geq 2$ be a fixed integer. For each sufficiently large integer $m$, we have $Z_{2,\l+1}(m,n) = \lfloor A_\l^k(m,n) \rfloor$ for all positive integers $\l$ and $n$ such that
\[\left(\l\tbinom{m}{2}+\beta m\right)/\tbinom{k+1}{2}+4\bigl(\tfrac{k^2}{\l}+1\bigr) \leq n \leq \left(\l\tbinom{m}{2}-\tfrac{\alpha k}{k+1}m\right)/\tbinom{k}{2}-4\bigl(\tfrac{k(k-1)}{\l}+1\bigr).\]
\end{lemma}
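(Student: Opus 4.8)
The plan is, for every $n$ in the stated range, to construct a $\l$-linear hypergraph $H$ of order $m$ with $n_k$ edges of size $k$ and $n_{k+1}$ edges of size $k+1$; by Lemma~\ref{L:zBoundFloor} this gives $Z_{2,\l+1}(m,n)=\lfloor A_\l^k(m,n)\rfloor$. For $n$ in the ``middle'' range $(1+\e)\l\tbinom{m}{2}/\tbinom{k+1}{2}\le n\le(1-\e)\l\tbinom{m}{2}/\tbinom{k}{2}$ such an $H$ is already produced by Lemma~\ref{L:middleRange}, and there we are free to fix $\e=\e(k)>0$ as small as we like; we take it small in terms of the constants $\gamma(K_k)$ and $\gamma(K_{k+1})$ furnished by Lemma~\ref{T:denseFDecompsArbLambda}. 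It therefore remains to treat $n$ below $(1+\e)\l\tbinom{m}{2}/\tbinom{k+1}{2}$ (but at least the stated lower bound) and $n$ above $(1-\e)\l\tbinom{m}{2}/\tbinom{k}{2}$ (but at most the stated upper bound). These two cases are mirror images of one another under interchanging the roles of edges of size $k$ and of size $k+1$, and of $\beta$ and $\alpha$, so we describe only the first, where by \eqref{E:nkExact} $n_k$ lies roughly between $\tfrac{\beta m}{k}$ and $\tfrac{\e\l m^2}{k}$ while $n_{k+1}=\Theta(\l m^2)$.

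We build $H=H_k\uplus H_{k+1}$ with $H_k$ a $k$-uniform hypergraph having $n_k$ edges and $H_{k+1}$ a $(k+1)$-uniform hypergraph having $n_{k+1}$ edges, obtaining $H_{k+1}$ from $H_k$ by a decomposition argument. Fix a vertex set $V$ of size $m$, a small set $W\subseteq V$, and a matching $D$ with $d$ edges on $V\setminus W$. Suppose we have a $k$-uniform $H_k$ whose underlying multigraph $A_k$ is contained in $\l K_V-D$, which satisfies $\deg_{H_k}(x)\equiv\beta+\deg_D(x)\mod{k}$ for every $x\in V$, whose $W$-meeting edges all lie inside $W$, and which has maximum degree $O(k)$ on $V\setminus W$. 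Put $G''=\l K_V-D-A_k$. Then $|E(G'')|=\l\tbinom{m}{2}-d-\tbinom{k}{2}n_k=\tbinom{k+1}{2}n_{k+1}$ by \eqref{E:edgeSumEasy}; the congruence on $\deg_{H_k}$ together with $\l(m-1)\equiv-\beta\mod{k}$ gives $\deg_{G''}(x)\equiv0\mod{k}$ for every $x$; and for every $x$ one checks, splitting on whether $x\in W$, that $|N^\l_{G''}(x)|\ge m-1-O(k)-|W|\ge(1-\gamma(K_{k+1}))m$. Thus $G''$ is $K_{k+1}$-divisible, so Lemma~\ref{T:denseFDecompsArbLambda} yields a $K_{k+1}$-decomposition of $G''$; this is our $H_{k+1}$, and then $H=H_k\uplus H_{k+1}$ has underlying multigraph $\l K_V-D$ together with the required edge counts, hence is $\l$-linear as needed.

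Constructing $H_k$ is the heart of the proof and proceeds in two steps, reflecting the two-term shape of the optimum in Lemma~\ref{L:aBoundWithEq}. In the first step we place all but about $\tfrac{\beta m}{k}$ of the $n_k$ edges on the small set $W$: after a routine case analysis according to the size of $n_k$ (when $n_k$ is close to $\tfrac{\beta m}{k}$ this step contributes nothing and all edges go into the second step) we choose $W$ with $|W|\le\tfrac12\gamma(K_{k+1})m$ together with a multiplicity $\l'\le\l$ so that some $K_k$-divisible sub-multigraph $G_1$ of $\l' K_W$ with $|N^{\l'}_{G_1}(x)|\ge(1-\gamma(K_k))|W|$ has exactly the number of edges we wish to place there and has every degree $\equiv(k-1)\beta\mod{k(k-1)}$, and then decompose $G_1$ into copies of $K_k$ via Lemma~\ref{T:denseFDecompsArbLambda}; the resulting hypergraph lies entirely on $W$, is near-regular there, and has all degrees $\equiv\beta\mod{k}$. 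In the second step we apply Lemma~\ref{L:graphRealisation} with $\ell=k$ and with $A$ equal to $D$ together with the underlying multigraph of the first piece, to realise a $k$-uniform hypergraph on $V\setminus W$ with a degree sequence $(r_x)$ satisfying $r_x\equiv\beta+\deg_D(x)\mod{k}$ and $r_x=O(k)$ for each $x$ and with $\sum_xr_x=k\,n^\dag$, where $n^\dag$ is the number of edges not placed in the first step; note that $\sum_x(\beta+\deg_D(x))=\beta m+2d\equiv0\mod{k}$ automatically, from the definitions of $\beta$ and $d$, so the condition $\sum_x r_x=k\,n^\dag$ imposes no hidden constraint. Since Lemma~\ref{L:graphRealisation} guarantees that the union of this hypergraph with $A$ is $\l$-linear, and its support lies off $W$ so that $\deg_A(x)\le1$ wherever $r_x>0$, the union of the two pieces is the required $H_k$.

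The main obstacle is the simultaneous bookkeeping: matching all edge counts exactly, keeping every quantity nonnegative, and satisfying the congruences on the degrees of $G_1$, on $(r_x)$, and on $|E(G'')|$ all at once; most delicate is verifying hypothesis (ii) of Lemma~\ref{L:graphRealisation}, namely $n^\dag\ge2qr$. Because the second-step edges sit only on vertices with $\deg_A(x)\le1$, the quantity $q=1+\lfloor\tfrac1\l\max\{r_x(k-1)+\deg_A(x)\}\rfloor$ is only $O(1+k^2/\l)$ and $r=O(k)$, so $2qr=O(k+k^3/\l)$, which has to be dominated by $n^\dag\approx\tfrac{\beta m}{k}$; this requirement, together with the integrality of the various quantities, is precisely why the hypothesis forces $n$ to exceed its minimum value by the additive $4(\tfrac{k^2}{\l}+1)$. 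The case of $n$ near the upper end is handled identically after interchanging $k$ with $k+1$: one first builds a $(k+1)$-uniform hypergraph with $n_{k+1}$ edges whose degrees are $\equiv\alpha-\deg_D(x)\mod{k-1}$, and then recovers $H_k$ as a $K_k$-decomposition of $\l K_V-D-A_{k+1}$; there the analogous slack is the stated $4(\tfrac{k(k-1)}{\l}+1)$.
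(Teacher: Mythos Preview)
Your outline is essentially the paper's proof: handle the bulk of the interval via Lemma~\ref{L:middleRange}, and at each extreme first assemble the minority-size edges in two stages (a bulk placement on a small vertex set, then a distribution via Lemma~\ref{L:graphRealisation} with degrees prescribed modulo $k$ or $k-1$), and finally recover the majority-size edges by applying Lemma~\ref{T:denseFDecompsArbLambda} to the $\l$-defect.

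Two remarks are worth making. First, your variation of confining the Lemma~\ref{L:graphRealisation} step to $V\setminus W$ (rather than all of $V$, as the paper does) is harmless and even streamlines the check of hypothesis~(ii), since then $\deg_A(x)\le 1$ on the support and no case split on $\beta$ is needed; the price is that the congruence burden shifts to the first stage, where you now need every vertex of $W$ to have first-stage degree $\equiv\beta\pmod{k}$ rather than $\equiv 0\pmod{k}$, and you must also choose $|W|$ so that $\beta|W|\equiv 0\pmod{k}$ to make $\sum_{x\in V\setminus W}r_x$ divisible by $k$. Second, the place where your sketch is thinnest is precisely that first stage: calling the construction of $G_1$ a ``routine case analysis'' undersells what is needed, namely a $K_k$-divisible $\l'$-linear multigraph on $W$ with an \emph{exact} prescribed edge count, all degrees in a fixed residue class modulo $k(k-1)$, and large $\l'$-neighbourhoods, uniformly over the whole range of target sizes. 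The paper's mechanism for this is not a case analysis but a single device: fix $|V'|$ with $\l(|V'|-1)\equiv 0\pmod{h(h-1)}$ for a constant $h\equiv 1\pmod{k(k^2-1)}$, take an $h$-uniform hypergraph $H_h$ with underlying multigraph $\l K_{V'}$, and then decompose a chosen number of its $K_h$'s into copies of $K_k$. This quantises the first-stage edge count into multiples of $\binom{h}{2}/\binom{k}{2}$ (the remainder being absorbed into the second stage) and forces each first-stage degree to be a multiple of $(h-1)/(k-1)\equiv 0\pmod{k}$ automatically.
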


\begin{proof}
Assume that $m$ is a large integer and that $\l$ and $n$ are positive integers satisfying the condition in the lemma statement.
Let $V$ be a set of $m$ vertices. By Lemma~\ref{T:denseFDecompsArbLambda} we can fix a constant $\gamma=\gamma(k)<\frac{1}{3k^2}$ such that, for each $\ell \in \{k,k+1\}$, there is a $K_\ell$-decomposition of any $K_\ell$-divisible $\l$-linear multigraph of order $m$ with minimum $\l$-neighbourhood size at least $(1-\gamma)m$. Also by Lemma~\ref{T:denseFDecompsArbLambda} we can choose a sufficiently large constant $h=h(k)$ such that $h \equiv 1 \mod{k(k^2-1)}$ and $K_h$ has a $K_\ell$-decomposition for each $\ell \in \{k,k+1\}$. Choose a subset $V'$ of $V$ such that $|V'|$ is the least integer for which $|V'| \geq \frac{1}{2} \gamma m$ and $\l(|V'|-1) \equiv 0 \mod{h(h-1)}$. Then, since $m$ is large, there is an $h$-uniform hypergraph $H_h$ whose underlying multigraph is $\l K_{V'}$ by Lemma~\ref{T:denseFDecompsArbLambda}. Note that $|V'|=\frac{1}{2} \gamma m+O(1)$. Let $D$ be a matching with $d$ edges whose vertex set is a subset of $V \setminus V'$.

Let $\epsilon= \epsilon(k)= \frac{1}{3(k+1)}\gamma^2$. By Lemma~\ref{L:middleRange}, we may assume that either
\[\frac{\l\binom{m}{2}+\beta m}{\binom{k+1}{2}}+4\bigl(\tfrac{k^2}{\l}+1\bigr) \leq n \leq \frac{(1+\epsilon)\l\binom{m}{2}}{\binom{k+1}{2}} \text{ or } \frac{(1-\epsilon)\l\tbinom{m}{2}}{\binom{k}{2}} \leq n \leq \frac{\l\binom{m}{2}-\frac{\alpha k}{k+1}m}{\binom{k}{2}}-4\bigl(\tfrac{k(k-1)}{\l}+1\big).\]
We now divide the proof into Cases 1 and 2 according to whether the former or the latter of these holds, respectively.

\textbf{Case 1.} By the bounds on $n$ in this case and \eqref{E:nkExact} we have
\begin{equation} \label{E:smallEdgesLB}
\tfrac{\beta m+d}{k}+2(k+1)\bigl(\tfrac{k^2}{\l}+1\bigr) \leq n_k \leq \tfrac{\epsilon \l}{k}\tbinom{m}{2}+\tfrac{d}{k}.
\end{equation}
We will form the desired hypergraph $H$ as the union of $k$-uniform hypergraphs $H'_{k}$ and $H''_k$ and a $(k+1)$-uniform hypergraph $H_{k+1}$. All but approximately $\frac{\beta m}{k}$ of the required $n_k$ edges of size $k$ will be used in $H'_k$, and then $H''_k$ will contain the remaining edges of size $k$, carefully distributed so as to make the $\l$-defect of $H'_k \uplus H''_k \uplus D$ be $K_{k+1}$-divisible. We will then apply Lemma~\ref{T:denseFDecompsArbLambda} to this defect to obtain $H_{k+1}$. We proceed to describe each of these hypergraphs and justify that they do indeed exist. \smallskip

\noindent \textbf{$\boldsymbol{H'_k}$:} This will be a (possibly empty) $k$-uniform $\l$-linear hypergraph on vertex set $V'$ such that $\deg_{H'_k}(x) \equiv 0 \mod{k}$ for each $x \in V'$ and $H'_k$ has $n'_k$ edges, where $n'_k$ is the greatest integer such that $n'_k \equiv 0 \mod{\binom{h}{2}/\binom{k}{2}}$ and
\[n_k-n'_k \geq \tfrac{\beta m+d}{k}+2(k+1)\bigl(\tfrac{k^2}{\l}+1\bigr).\]
Note that $n'_k \geq 0$ by \eqref{E:smallEdgesLB}. We form $H'_k$ by decomposing each of $n'_k\binom{k}{2}/\binom{h}{2}$ edges of $H_h$ into $\binom{h}{2}/\binom{k}{2}$ edges of size $k$ and deleting the remaining edges of $H_h$. There are sufficiently many edges in $H_h$ to accomplish this because it can be established that $n_k\binom{k}{2} \leq \l\binom{|V'|}{2}$ using \eqref{E:smallEdgesLB}, $|V'| \geq \frac{1}{2} \gamma m$ and $\epsilon = \frac{1}{3(k+1)}\gamma^2$. For each $x \in V$, we
have $\deg_{H'_k}(x) \equiv 0 \mod{k}$ because $\deg_{H'_k}(x)$ is a multiple of $\frac{h-1}{k-1}$ and $h \equiv 1 \mod{k(k-1)}$. \smallskip

\noindent \textbf{$\boldsymbol{H''_k}$:} This will be a $k$-uniform hypergraph on vertex set $V$ such that $H'_k \uplus H''_k \uplus D$ is $\l$-linear, $H''_k$ has $n_k-n'_k$ edges and, for each $x \in V$, $\deg_{H''_k}(x) \equiv \beta+\deg_D(x) \mod{k}$ and $\deg_{H''_k}(x) \leq 2k$.
Let
\[m_1=(n_k-n'_k)-\tfrac{1}{k}(\beta m +2d)\]
and note that $m_1$ is an integer since $2d\equiv \l m(m-1) \mod{k}$ by the definition of $d$ and $\beta \equiv -\l(m-1)  \mod{k}$. Furthermore, $|V(D)| < 2k < m_1 = O(1)$ using the definition of $n'_k$ and the fact that $0 \leq d \leq k-1$. Let $\{V_1,V_2\}$ be a partition of $V$ such that $|V_1|=m_1$, $V' \subseteq V_2$ and $V(D) \subseteq V_1$. Define
\[n''_{k}(x)=
\left\{
  \begin{array}{ll}
    \beta+k+\deg_D(x)\quad & \hbox{if $x \in V_1$} \\
    \beta & \hbox{if $x \in V_2$.}
  \end{array}
\right.\]
We obtain $H''_k$ by applying  Lemma~\ref{L:graphRealisation} with $\ell=k$, $r_x=n''_{k}(x)$ for each $x \in V$
and $A=\l K_{V'} \uplus D$. Now, $\sum_{x \in V}n''_{k}(x)=\beta m + k m_1+2d=k(n_k-n'_k)$ by the definition of $m_1$ and so (i) of Lemma~\ref{L:graphRealisation} holds with $n^\dag=n_k-n'_k$. Let $r$ and $q$ be as defined in the statement of Lemma~\ref{L:graphRealisation}. If $\beta \in \{1, \ldots,k-1\}$, then $r \leq \beta+k+1 \leq 2k$ and $q \leq |V'|+O(1) = \frac{1}{2}\gamma m +O(1)$. If $\beta =0$, then $r \leq k+1$ and $q \leq \frac{k^2}{\l}+1$, noting $n''_{k}(x)=0$ for each $x \in V'$. So, in either case, (ii) of Lemma~\ref{L:graphRealisation} can be seen to hold using $\gamma<\frac{1}{3k^2}$ and $n_k-n'_k \geq \tfrac{\beta m+d}{k}+2(k+1)(\tfrac{k^2}{\l}+1)$ by the definition of $n'_{k}$. Thus the desired hypergraph $H''_{k}$ exists, noting $H'_k \uplus H''_k \uplus D$ is $\l$-linear by our choice of $A$.\smallskip

\noindent \textbf{$\boldsymbol{H_{k+1}}$:} Let $G$ be the underlying multigraph of $H'_{k} \uplus H''_{k} \uplus D$ and $\overline{G}$ be its $\l$-defect. Then $H_{k+1}$ will be a $(k+1)$-uniform hypergraph with $n_{k+1}$ edges whose underlying multigraph is $\overline{G}$. Note that
\[\deg_{G}(x)=(k-1)(\deg_{H'_k \uplus H''_k}(x))+\deg_D(x)=
\left\{
  \begin{array}{ll}
    (k-1)(\beta+k)+k\,\deg_D(x) \ \ & \hbox{if $x \in V_1$} \\
    (k-1)(\deg_{H'_k}(x)+\beta) \ \ & \hbox{if $x \in V_2$}
  \end{array}
\right.
\]
and that $|N_G(x)| \leq |V'|-1+\beta = \frac{1}{2}\gamma m +O(1)$ for each $x \in V$. Thus, for each $x \in V$, $\deg_G(x) \equiv -\beta \mod{k}$ using the fact that $\deg_{H'_k}(x)\equiv 0 \mod{k}$. It follows that, for each $x \in V$, $|N^\l_{\overline{G}}(x)| > (1-\gamma)m$ and $\deg_{\overline{G}}(x) \equiv \l(m-1)+\beta \equiv 0 \mod{k}$. Finally, $|E(\overline{G})|=\l\binom{m}{2}-\binom{k}{2}n_{k}-d=\binom{k+1}{2}n_{k+1}$ by \eqref{E:edgeSumEasy}. Thus, by Lemma~\ref{T:denseFDecompsArbLambda} and our choice of $\gamma$, $H_{k+1}$ exists and will have $n_{k+1}$ edges.\smallskip

Then $H'_k \uplus H''_k \uplus H_{k+1}$ is a $\l$-linear hypergraph on $m$ vertices with $n_k$ edges of size $k$ and $n_{k+1}$ edges of size $k+1$ and the proof is complete by Lemma~\ref{L:zBoundFloor}.\smallskip

\textbf{Case 2.} This case proceeds analogously to Case 1. By the bounds on $n$ in this case and \eqref{E:nkExact} we have
\begin{equation} \label{E:bigEdgesLB}
\tfrac{\alpha m}{k+1}+2(k-1)\bigl(\tfrac{k(k-1)}{\l}+1\bigr)-\tfrac{d}{k} \leq n_{k+1} \leq \tfrac{\epsilon}{k}\tbinom{m}{2}.
\end{equation}
We will form the desired hypergraph $H$ as the union of $(k+1)$-uniform hypergraphs $H'_{k+1}$ and $H''_{k+1}$ and a $k$-uniform hypergraph $H_{k}$. We describe each of these hypergraphs and justify that they do indeed exist.\smallskip

\noindent \textbf{$\boldsymbol{H'_{k+1}}$:} This will be a (possibly empty) $(k+1)$-uniform $\l$-linear hypergraph on vertex set $V'$ such that $\deg_{H'_{k+1}}(x) \equiv 0 \mod{k-1}$ for each $x \in V'$ and $H'_{k+1}$ has $n'_{k+1}$ edges, where $n'_{k+1}$ is the greatest integer such that  $n'_{k+1} \equiv 0 \mod{\binom{h}{2}/\binom{k+1}{2}}$ and
\[n_{k+1}-n'_{k+1} \geq \tfrac{\alpha m}{k+1}+2(k-1)\bigl(\tfrac{k(k-1)}{\l}+1\bigr)-\tfrac{d}{k}.\]
We form $H'_{k+1}$ by decomposing each of $n'_{k+1}\binom{k+1}{2}/\binom{h}{2}$ edges of $H_h$ into $\binom{h}{2}/\binom{k+1}{2}$ edges of size $k+1$ and deleting the remaining edges of $H_h$. There are sufficiently
many edges in $H_h$ to accomplish this because $\binom{k+1}{2}n_{k+1} \leq \binom{|V'|}{2}$ by \eqref{E:bigEdgesLB} and since $|V'| \geq \frac{1}{2} \gamma m$ and $\epsilon = \frac{1}{3(k+1)}\gamma^2$. For each $x \in V$, we have $\deg_{H'_{k+1}}(x) \equiv 0 \mod{k-1}$ because $\deg_{H'_{k+1}}(x)$ is a multiple of $\frac{h-1}{k}$ and $h \equiv 1 \mod{k(k-1)}$. \smallskip

\noindent \textbf{$\boldsymbol{H''_{k+1}}$:} This will be a $(k+1)$-uniform hypergraph on vertex set $V$ such that $H'_{k+1} \uplus H''_{k+1} \uplus D$ is $\l$-linear, $H''_{k+1}$ has $n_{k+1}-n'_{k+1}$ edges and, for each $x \in V$, $\deg_{H''_{k+1}}(x) \equiv \alpha-\deg_D(x) \mod{k-1}$ and $\deg_{H''_{k+1}}(x) < 2k$.
Let
\[m_1=\mfrac{(k+1)(n_{k+1}-n'_{k+1})-\alpha m+2d}{k-1}\]
and note that $m_1$ is an integer since $\alpha \equiv \l(m-1) \mod{k-1}$, $2d \equiv \l m(m-1) - 2kn_{k+1} \mod{k-1}$ by \eqref{E:nkExact} and $n'_{k+1} \equiv 0 \mod{k-1}$ by the definitions of $n'_{k+1}$ and $h$. Furthermore, $|V(D)| < 2k < m_1 = O(1)$ by the definition of $n'_{k+1}$ and the fact that $0 \leq d \leq k-1$. Let $\{V_1,V_2\}$ be a partition of $V$ such that $|V_1|=m_1$, $V' \subseteq V_2$ and $V(D) \subseteq V_1$.
Define
\[n''_{k+1}(x)=
\left\{
  \begin{array}{ll}
    \alpha+k-1-\deg_D(x)\quad & \hbox{if $x \in V_1$} \\
    \alpha & \hbox{if $x \in V_2$.}
  \end{array}
\right.\]
We obtain $H''_{k+1}$ by applying  Lemma~\ref{L:graphRealisation} with $\ell=k+1$, $r_x=n''_{k+1}(x)$ for each $x \in V$ and $A=\l K_{V'} \uplus D$. Now, $\sum_{x \in V}n''_{k+1}(x)=\alpha m + (k-1) m_1-2d=(k+1)(n_{k+1}-n'_{k+1})$ by the definition of $m_1$ and so (i) of Lemma~\ref{L:graphRealisation} holds with $n^\dag=n_{k+1}-n'_{k+1}$. Let $r$ and $q$ be as defined in the statement of Lemma~\ref{L:graphRealisation}. If $\alpha \in \{1,\ldots,k-2\}$, then $r \leq \alpha+k-1 <2k$ and $q \leq |V'|+O(1) = \frac{1}{2}\gamma m +O(1)$. If $\alpha =0$, then $r \leq k-1$ and $q \leq \lfloor\frac{k(k-1)}{\l}+1\rfloor$. So, in either case, (ii) of Lemma~\ref{L:graphRealisation} can be seen to hold using $n_{k+1}-n'_{k+1} \geq \tfrac{\alpha m}{k+1}+2(k-1)(\frac{k(k-1)}{\l}+1)-\frac{d}{k}$ by \eqref{E:bigEdgesLB} and $\gamma<\frac{1}{3k^2}$. Thus the desired hypergraph $H''_{k+1}$ exists, noting $H'_{k+1} \uplus H''_{k+1} \uplus D$ is $\l$-linear by our choice of $A$.\smallskip

\noindent \textbf{$\boldsymbol{H_{k}}$:} Let $G$ be the underlying multigraph of $H'_{k+1} \uplus H''_{k+1} \uplus D$ and $\overline{G}$ be its $\l$-defect. Then $H_{k}$ will be a $k$-uniform hypergraph with $n_{k}$ edges whose underlying multigraph is $\overline{G}$. Note that
\[\deg_{G}(x)=k(\deg_{H'_{k+1} \uplus H''_{k+1}}(x))+\deg_D(x)=
\left\{
  \begin{array}{ll}
    k(\alpha+k-1)-(k-1)\deg_D(x) \ \ & \hbox{if $x \in V_1$} \\
    k(\deg_{H'_{k+1}}(x)+\alpha) \ \ & \hbox{if $x \in V_2$}
  \end{array}
\right.
\]
and that $|N_G(x)| \leq |V'|+O(1) = \frac{1}{2}\gamma m +O(1)$ for each $x \in V$. Thus, for each $x \in V$, $\deg_G(x) \equiv \alpha \mod{k-1}$ using the fact that $\deg_{H'_{k+1}}(x)\equiv 0 \mod{k-1}$. It follows that, for each $x \in V$, $|N^\l_{\overline{G}}(x)| > (1-\gamma)m$ and $\deg_{\overline{G}}(x) \equiv \l(m-1)-\alpha \equiv 0 \mod{k-1}$. Finally, $|E(\overline{G})|=\l\binom{m}{2}-\binom{k+1}{2}n_{k+1}-d=\binom{k}{2}n_{k}$ by \eqref{E:edgeSumEasy}. Thus, by Lemma~\ref{T:denseFDecompsArbLambda} and our choice of $\gamma$, $H_{k}$ exists and will have $n_{k}$ edges.\smallskip

Then $H'_{k+1} \uplus H''_{k+1} \uplus H_{k}$ is a $\l$-linear hypergraph on $m$ vertices with $n_k$ edges of size $k$ and $n_{k+1}$ edges of size $k+1$ and the proof is complete by Lemma~\ref{L:zBoundFloor}.
\end{proof}

\section{Achieving \texorpdfstring{$\boldsymbol{\lfloor B^k_\l(m,n) \rfloor}$}{B(m,n)}} \label{S:achievingB}

In this section we show that $Z_{2,\l+1}(m,n)=\lfloor B^k_\l(m,n) \rfloor$ in the cases that Theorem~\ref{T:boundAchieved} specifies and hence complete the proof of the theorem. As suggested by Lemmas~\ref{L:obeyLP} and \ref{L:bBoundWithEq}, we do so by constructing $\l$-linear hypergraphs containing edges of three different sizes.

In Section~\ref{S:achievingA}, it was a simple process to translate from an optimum of $\A_\l(m,n)$  with two nonzero real variables given by Lemma~\ref{L:aBoundWithEq} to integer numbers of edges of the two different sizes that we then showed could be realised in a $\l$-linear hypergraph achieving the total degree bound. It is not so obvious, however, how this should be done from an optimum of $\B_\l(m,n)$ with three nonzero variables given by Lemma~\ref{L:bBoundWithEq}. In the following lemma we accomplish this translation through a somewhat involved process that also produces a ``skeleton'' that prescribes exactly how many edges of each size will be incident with each vertex in the hypergraph. Once we have this skeleton, similar techniques to those we employed in the proof of Lemma~\ref{L:aBoundAchieved} can be used to construct the desired hypergraph.

\begin{lemma}\label{L:bBoundAchieved}
Let $k \geq 3$ be a fixed integer. For each sufficiently large integer $m$, we have $Z_{2,\l+1}(m,n)=\lfloor B^k_\l(m,n) \rfloor$ for all positive integers $\l$ and $n$ such that $\l(m-1) \not\equiv 0 \mod{k-1}$ and
\[\left(\l\tbinom{m}{2}-\tfrac{\alpha k}{k+1} m\right)/\tbinom{k}{2}+4(k-2)(k-1) \leq n \leq \left(\l\tbinom{m}{2}+m(k-1-\alpha)\right)/\tbinom{k}{2}-4(k+1)(k+2).\]
\end{lemma}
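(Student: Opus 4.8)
The upper bound $Z_{2,\l+1}(m,n)\leq\lfloor B^k_\l(m,n)\rfloor$ is immediate from Theorem~\ref{T:upperBounds} together with the integrality of $Z_{2,\l+1}(m,n)$, so the work is entirely in producing, for each $\l$ and each $n$ in the stated interval, a $\l$-linear hypergraph of order $m$ with total degree exactly $\lfloor B^k_\l(m,n)\rfloor$. Throughout this range Lemma~\ref{L:bBoundWithEq} shows that the real optimum of $\B^k_\l(m,n)$ uses only edges of sizes $k-1$, $k$ and $k+1$, with $n^*_{k-1}$ and $n^*_{k+1}$ both $O(m)$ and $n^*_k=\Theta(m^2)$; so the target hypergraph should consist of a $\Theta(m^2)$-sized bulk of edges of size $k$ and only $O(m)$ edges of sizes $k-1$ and $k+1$. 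The plan is the same three-phase strategy as in Lemma~\ref{L:aBoundAchieved} (a bulk piece built by decomposing large cliques on a small vertex set via Lemma~\ref{T:denseFDecompsArbLambda}, a correction piece of $O(m)$ edges distributed by Lemma~\ref{L:graphRealisation}, and a final piece filling the remaining $\l$-defect by another application of Lemma~\ref{T:denseFDecompsArbLambda}), but preceded by a more elaborate combinatorial bookkeeping step.

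The first and hardest step is to choose integers $n_{k-1},n_k,n_{k+1}$ with $n_{k-1}+n_k+n_{k+1}=n$ and, simultaneously, a \emph{skeleton}: an assignment to each vertex $v$ of a triple $(r^{k-1}_v,r^k_v,r^{k+1}_v)$ of nonnegative integers recording how many edges of sizes $k-1$, $k$, $k+1$ it should lie in, with $\sum_v r^{i}_v = i\,n_i$ for $i\in\{k-1,k,k+1\}$. This is harder than the two-variable rounding of Section~\ref{S:achievingA} because the bound $B^k_\l(m,n)$ is tight in \emph{both} constraints \eqref{E:con2} and \eqref{E:con3}, so there are two ``slack'' parameters to control: the codegree deficiency $d'=\l\binom m2-\binom{k-1}2 n_{k-1}-\binom k2 n_k-\binom{k+1}2 n_{k+1}$ (which will equal the number of edges of the final $\l$-defect), and the per-vertex slack in \eqref{E:con3}. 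Using the weighted-sum identity from the proof of Lemma~\ref{L:bBoundWithEq}, any $\l$-linear hypergraph using only these three edge sizes has total degree $B^k_\l(m,n)-\frac{(k-1)d'+(k-1-\alpha)d''}{k^2-1-\alpha}$, where $d''\geq 0$ measures the failure of \eqref{E:con3}; one arranges the rounding so that this expression equals $\lfloor B^k_\l(m,n)\rfloor$. Concretely, all but $O(1)$ ``special'' vertices are assigned the same generic triple forcing degree $c$ or $c+1$ with exactly the deficiency \eqref{E:con3} prescribes for near-equality (recall from the proof of Lemma~\ref{L:obeyLP} that equality forces each high-degree vertex to have degree $c+1$, deficiency $k-1-\alpha$, and no incident edge of size exceeding $k$), while the $O(1)$ special vertices absorb all the residues in the totals $i\,n_i$, in the divisibility requirements, and in $d'$. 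Verifying that such a skeleton exists — in particular that the residues are absorbable and the various congruences (divisibility of each $i\,n_i$ by $\binom i2$-type quantities, and of each $\deg$ by $k-1$ for $K_k$-divisibility) can be met simultaneously — is a finite case analysis in $\alpha$ and the relevant residues, and is where the quantitative endpoints $4(k-2)(k-1)$ and $4(k+1)(k+2)$ in the lemma statement come from.

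Given the skeleton, the realisation mirrors Lemma~\ref{L:aBoundAchieved} closely. Pick a small vertex set $V'$ of size $\Theta(\gamma m)$ with $\l(|V'|-1)$ suitably divisible, build an $h$-uniform hypergraph on $\l K_{V'}$ for a large constant $h\equiv 1\pmod{k(k^2-1)}$ with $K_h$ having decompositions into $K_{k-1}$, $K_k$ and $K_{k+1}$, and decompose enough of its $K_h$-edges into edges of size $k$ (and size $k-1$ or $k+1$ as the skeleton on $V'$ demands) to place the bulk of each $n_i$; then use Lemma~\ref{L:graphRealisation} (applied with $\ell=k-1$ and with $\ell=k+1$, and once more with $\ell=k$ if needed) with $A=\l K_{V'}\uplus D$ — where $D$ is a matching of $d'$ edges on special vertices — to distribute the remaining $O(m)$ edges of sizes $k-1$ and $k+1$ and any leftover size-$k$ edges across all of $V$, so that the resulting union has the skeleton's per-vertex degrees for sizes $k-1$ and $k+1$, is $\l$-linear, and has all $\l$-neighbourhoods of size $(1-O(\gamma))m$; finally, the residual $\l$-defect is $K_k$-divisible by construction and dense, so Lemma~\ref{T:denseFDecompsArbLambda} supplies the remaining $K_k$-edges. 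The main obstacle is thus firmly the skeleton construction of the second paragraph; once it is in place, the degree- and divisibility-bookkeeping feeding the three lemmas is routine, essentially as in Sections~\ref{S:achievingA}. After establishing the lemma, the proof of Theorem~\ref{T:boundAchieved} is completed by combining Lemmas~\ref{L:aBoundAchieved} and~\ref{L:bBoundAchieved} (the latter applied at both index $k$ and index $k+1$), and Corollary~\ref{C:asymptotic} follows by specialising $n\sim a\l\binom m2$.
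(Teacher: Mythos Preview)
Your high-level plan---choose integer counts $n_{k-1},n_k,n_{k+1}$ together with a per-vertex skeleton, then realise the $(k-1)$- and $(k+1)$-uniform pieces via Lemma~\ref{L:graphRealisation} and finish the $k$-uniform piece via Lemma~\ref{T:denseFDecompsArbLambda}---is exactly the paper's. But your realisation imports too much machinery from Lemma~\ref{L:aBoundAchieved}. There, the minority edge-count could be as large as $\Theta(\epsilon\l m^2)$, so a set $V'$ of size $\Theta(\gamma m)$ and $K_h$-decompositions were needed to absorb most of it before Lemma~\ref{L:graphRealisation} could handle the residue. Here the interval for $n$ has width only $O(m)$ around $\l\binom{m}{2}/\binom{k}{2}$, so $n^*_{k-1}$ and $n^*_{k+1}$ are both $O(m)$: there is no bulk of $(k{-}1)$- or $(k{+}1)$-edges to park on $V'$, and your phrase ``place the bulk of each $n_i$'' cannot apply to $n_k=\Theta(\l m^2)$ either, since $\l K_{V'}$ has only $\Theta(\l\gamma^2 m^2)$ edges---and in any case, placing $k$-edges on $V'$ would ruin the minimum-$\l$-neighbourhood hypothesis on $V'$ for the final application of Lemma~\ref{T:denseFDecompsArbLambda}. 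The paper simply omits the entire $V'/K_h$ phase: it applies Lemma~\ref{L:graphRealisation} once with $\ell=k-1$ (taking $A=\l K_{V_2}$ for a set $V_2$ of at most $k-2$ vertices) and once with $\ell=k+1$ (taking $A=D$), obtaining $H_{k-1}$ and $H_{k+1}$ in their entirety, and then a single application of Lemma~\ref{T:denseFDecompsArbLambda} to the residual $\l$-defect supplies $H_k$.

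You also misplace the source of the endpoint constants. The paper's skeleton is explicit and involves no residue case-analysis: one partitions $V=V_1\cup V_2\cup V_3\cup V_4$ with $|V_1|=m_1=\lfloor m_1^*\rfloor$, $|V_2|=m_2\leq k-2$, $|V_3|=2d\leq 2k$, and assigns a fixed pair $(n_{k-1}(x),n_{k+1}(x))$ on each part; the integers $m_1,m_2,d,n_{k-1},n_{k+1}$ are produced by a short chain of floors and ceilings starting from $m_1^*$ and $n_{k-1}^*$. The constants $4(k-2)(k-1)$ and $4(k+1)(k+2)$ in the hypothesis arise not from this rounding but from condition~(ii) of Lemma~\ref{L:graphRealisation}: the inequalities $n_{k-1}\geq 2qr$ and $n_{k+1}\geq 2qr$ (with $q,r$ bounded purely in terms of $k$) translate directly into the stated lower and upper bounds on $n$.
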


\begin{proof}
Because $\l(m-1) \not\equiv 0 \mod{k-1}$, we have $\alpha \in \{1,\ldots,k-2\}$. We will use this fact frequently throughout the proof. By Theorem~\ref{T:upperBounds} we have $Z_{2,\l+1}(m,n) \leq \lfloor B^k_\l(m,n) \rfloor$, so it only remains to find a $\l$-linear hypergraph $H$ on $m$ vertices with $n$ edges and total degree $\lfloor B^k_\l(m,n) \rfloor$.

Take $n^*_{k-1}$ and $n^*_{k+1}$ to be as defined in Lemma~\ref{L:bBoundWithEq}, that is,
\[n^*_{k-1}=\tfrac{(k+1)(k-1-\alpha)}{(k-1)(k^2-1-\alpha)}\bigl(n\tbinom{k}{2}-\l\tbinom{m}{2}+\tfrac{\alpha k}{k+1} m\bigr),
\quad
n^*_{k+1}= \tfrac{\alpha}{k^2-1-\alpha}\bigl(\l\tbinom{m}{2}-n\tbinom{k}{2}+m(k-1-\alpha)\bigr).\]
Note that $n^*_{k-1}$ and $n^*_{k+1}$ are positive by our assumed bounds on $n$. Our first goal is to decide, for each $\ell \in \{k-1,k,k+1\}$, the number $n_\ell$ of edges of size $\ell$ that $H$ will have and to define a skeleton that prescribes exactly how many edges of each size will be incident with each vertex in $H$. We are guided by the (not necessarily integral) values of $n^*_{k-1}$ and $n^*_{k+1}$. In order to accomplish this we make a sequence of definitions. With the exception of $m_1^*$, $\tau$ and $\nu$, all the quantities we define will be integers by their definitions.
\begin{itemize}
    \item
Let $\tau=B^k_\l(m,n)-\lfloor B^k_\l(m,n) \rfloor$ and let
\[m^*_1 = \tfrac{k-1}{k-1-\alpha}n^*_{k-1} = \tfrac{k+1}{k^2-1-\alpha}\left(n\tbinom{k}{2}-\l\tbinom{m}{2}+\tfrac{\alpha k }{k+1} m \right) \quad \text{and} \quad m_1=\lfloor m^*_1 \rfloor.\]
Observe that $B^k_\l(m,n)=\frac{\l(m-1)-\alpha}{k-1}m+m^*_1$ and $\lfloor B^k_\l(m,n) \rfloor=\frac{\l(m-1)-\alpha}{k-1}m+m_1$ and hence that $m^*_1-m_1=\tau$.
    \item
Let $n_{k-1}=\lceil\frac{k-1-\alpha}{k-1}m_1\rceil$ and note that $n_{k-1}=\lceil \frac{k-1-\alpha}{k-1}(m^*_1-\tau) \rceil=\lceil n^*_{k-1}-\frac{k-1-\alpha}{k-1}\tau \rceil$. Since $0 \leq \tau < 1$ we thus have $n_{k-1} \in \{\lfloor n^*_{k-1} \rfloor, \lceil n^*_{k-1} \rceil\}$. Let $\nu=n^*_{k-1}-n_{k-1}$ and note that $-1 < \nu <1$.
    \item
Let $n_{k+1}=\lfloor B^k_\l(m,n) \rfloor-kn+n_{k-1}$. Note that $n_{k+1}=n^*_{k+1}-\nu-\tau$ because $\lfloor B^k_\l(m,n) \rfloor=B^k_\l(m,n)-\tau$, $n_{k-1}=n^*_{k-1}-\nu$, and $B^k_\l(m,n)=kn-n^*_{k-1}+n^*_{k+1}$ by Lemma~\ref{L:bBoundWithEq}.
    \item
Let $n_k=n-n_{k-1}-n_{k+1}$.
    \item
Let $d=\l\binom{m}{2}-\sum_{\ell=k-1}^{k+1}\binom{\ell}{2}n_\ell=\l\binom{m}{2}-n\binom{k}{2}+(k-1)n_{k-1}-kn_{k+1}$. Note that $d=k\tau+\nu$ because $n_{k-1}=n^*_{k-1}-\nu$, $n_{k+1}=n^*_{k+1}-\nu-\tau$ and $n\binom{k}{2}-(k-1)n^*_{k-1}+kn^*_{k+1}=\l\binom{m}{2}$ (by calculation or by considering the proof of Lemma~\ref{L:bBoundWithEq}). Thus $d \in \{0,\ldots,k\}$, because $0 \leq \tau < 1$, $-1<\nu<1$ and $d$ is an integer.
    \item
Let $m_2=(k-1)n_{k-1}-m_1(k-1-\alpha)$. So $m_2=(k-1-\alpha)\tau-(k-1)\nu$ using $n_{k-1}=n^*_{k-1}-\nu$, $m_1=m^*_1-\tau$ and $n^*_{k-1}=\frac{k-1-\alpha}{k-1}m^*_1$. Observe that $0 \leq m_2 \leq k-2$ by the definition of $n_{k-1}$ and since $m_2$ is an integer.
\end{itemize}

With these definitions in hand we now define our skeleton as follows. Let $\{V_1,V_2,V_3,V_4\}$ be a partition of $V$ such that
$|V_1|=m_1$, $|V_2|=m_2$, $|V_3|=2d$ and $|V_4|=m-m_1-m_2-2d$. Note that $m-m_1-m_2-2d$ is nonnegative because
\[m_1 \leq m^*_1 \leq m-\tfrac{2k(k+1)(k+2)(k^2-1)}{k^2-1-\alpha} < m-2k(k+1)(k+2) < m-m_2-2d\]
where the second inequality follows using the definition of $m^*_1$ and the upper bound on $n$ in the hypothesis of this lemma, and the last follows because $m_2 \leq k-2$ and $d \leq k$.
Let $D$ be a matching with vertex set $V_3$. We will form the desired $\l$-linear hypergraph $H$ as the union of hypergraphs $H_{k-1}$, $H_k$ and $H_{k+1}$, where, for each $\ell \in \{k-1,k,k+1\}$, $H_\ell$ is $\ell$-uniform and has vertex set $V$ and $n_\ell$ edges. The $\l$-defect of $H$ will be $D$ and we will have $\deg_{H_\ell}(x)=n_\ell(x)$ for each $\ell \in \{k-1,k+1\}$ and $x \in V$, where
\[
(n_{k-1}(x),n_{k+1}(x))=
\left\{
  \begin{array}{ll}
    (k-1-\alpha,0) \quad & \hbox{if $x \in V_1$} \\
    (1,\alpha+1) \quad & \hbox{if $x \in V_2$} \\
    (0,\alpha-1) \quad & \hbox{if $x \in V_3$} \\
    (0,\alpha) \quad & \hbox{if $x \in V_4$.}
  \end{array}
\right.
\]

The total degree of such a hypergraph $H$ will be $(k-1)n_{k-1}+kn_k+(k+1)n_{k+1}=\lfloor B^k_\l(m,n) \rfloor$ by our definitions of $n_k$ and $n_{k+1}$. So it suffices to show that there exist hypergraphs $H_{k-1}$, $H_k$ and $H_{k+1}$ with the appropriate properties.

\smallskip \noindent $\boldsymbol{H_{k-1}.}$ We form $H_{k-1}$ by applying Lemma~\ref{L:graphRealisation} with $\ell=k-1$, $A=\l K_{V_2}$ and $r_x=n_{k-1}(x)$ for each $x \in V$. Now, $\sum_{x \in V}n_{k-1}(x)=m_1(k-1-\alpha)+m_2=(k-1)n_{k-1}$ by the definition of $m_2$ and so (i) of Lemma~\ref{L:graphRealisation} holds with $n^\dag=n_{k-1}$. Let $r$ and $q$ be as defined in the statement of Lemma~\ref{L:graphRealisation}. We have $r=k-1-\alpha$ and $q \leq \max\{1+\frac{(k-1-\alpha)(k-2)}{\l},|V_2|+\frac{k-2}{\l}\} \leq (k-1)(k-2)$, noting that $|V_2|=m_2 \leq k-2$. Then
\[n_{k-1} = \left\lceil \tfrac{k-1-\alpha}{k-1}(m^*_1-\tau) \right\rceil \geq  \tfrac{4(k-2)(k+1)(k-1-\alpha)}{(k^2-1-\alpha)}\tbinom{k}{2}-\tfrac{k-1-\alpha}{k-1}\tau >  2k(k-2)(k-1-\alpha)-1 > 2qr\]
where the first inequality follows using the definition of $m^*_1$ and the lower bound on $n$ in the hypothesis of this lemma. So (ii) of Lemma~\ref{L:graphRealisation} holds and the desired hypergraph $H_{k-1}$ exists. Further, $H_{k-1} \uplus \l K_{V_2}$ is $\l$-linear by our choice of $A$.

\smallskip \noindent $\boldsymbol{H_{k+1}.}$ We form $H_{k+1}$ by applying Lemma~\ref{L:graphRealisation} with $\ell=k+1$, $A=D$ and $r_x=n_{k+1}(x)$ for each $x \in V$. Now,
\begin{multline*}
\medop\sum_{x \in V}n_{k+1}(x)
=m_2(\alpha+1)+2d(\alpha-1)+\alpha(m-m_1-m_2-2d)=\alpha(m-m_1)+m_2-2d \\
= \alpha(m-m^*_1)-(k+1)(\nu+\tau)
= (k+1)(n^*_{k+1}-\nu-\tau)
= (k+1)n_{k+1}
\end{multline*}
where the third equality is obtained by substituting $m_1=m_1^*-\tau$, $m_2=(k-1-\alpha)\tau-(k-1)\nu$ and $d=k\tau+\nu$, and the fourth is obtained because $\alpha(m-m^*_1)=(k+1)n^*_{k+1}$ by calculation using the definitions of $m^*_1$ and $n^*_{k+1}$. Thus (i) of Lemma~\ref{L:graphRealisation} holds with $n^\dag=n_{k+1}$. Let $r$ and $q$ be as defined in the statement of Lemma~\ref{L:graphRealisation}. We have $r \leq \alpha+1$ and $q \leq \frac{k(\alpha+1)}{\l}+1$. Then
\[n_{k+1} = n^*_{k+1} - \nu -\tau \geq  \tfrac{4\alpha (k+1)(k+2)}{(k^2-1-\alpha)}\tbinom{k}{2}-\nu-\tau > 2\alpha k(k+2) > 2qr\]
where the first inequality follows using the definition of $n^*_{k+1}$ and the upper bound on $n$ in the hypothesis of this lemma and the last follows using $1 \leq \alpha \leq k-2$. So (ii) of Lemma~\ref{L:graphRealisation} holds and the desired hypergraph $H_{k+1}$ exists. Further, $H_{k-1} \uplus H_{k+1} \uplus D$ is $\l$-linear by our choice of $A$, because $H_{k-1} \uplus \l K_{V_2}$ is $\l$-linear, and because $n_{k-1}(x)=0$ for each $x \in V_3 \cup V_4$ and $n_{k+1}(x)=0$ for each $x \in V_1$.

\smallskip \noindent $\boldsymbol{H_{k}.}$ Let $G$ be the underlying multigraph of $H_{k-1} \uplus H_{k+1} \uplus D$ and $\overline{G}$ be its $\l$-defect.
Now
\[\deg_{G}(x)=(k-2)n_{k-1}(x)+kn_{k+1}(x)+\deg_D(x)=
\left\{
  \begin{array}{ll}
    (k-2)(k-1-\alpha) \quad & \hbox{if $x \in V_1$} \\
    (k-2)+k(\alpha+1) \quad & \hbox{if $x \in V_2$} \\
    k(\alpha-1)+1 \quad & \hbox{if $x \in V_3$} \\
    k\alpha \quad & \hbox{if $x \in V_4$}
  \end{array}
\right.
\]
and hence $|N_G(x)| < k^2=O(1)$ for each $x \in V$. Thus $\deg_G(x) \equiv \alpha \mod{k-1}$ for each $x \in V$. It follows that, for each $x \in V$, $|N^\l_{\overline{G}}(x)| \geq m-O(1)$ and $\deg_{\overline{G}}(x) \equiv \l(m-1)-\alpha \equiv 0 \mod{k-1}$. Finally, $|E(\overline{G})|=\l\binom{m}{2}-\binom{k-1}{2}n_{k-1}-\binom{k+1}{2}n_{k+1}-d$ and, since $d=\l\binom{m}{2}-\sum_{\ell=k-1}^{k+1}\binom{\ell}{2}n_\ell$, we have $|E(\overline{G})|=\binom{k}{2}n_{k}$. Thus, by Lemma~\ref{T:denseFDecompsArbLambda}, $H_{k}$ exists and will have $n_{k}$ edges.\smallskip

So, from our previous discussion, $H=H_{k-1} \uplus H_k \uplus H_{k+1}$ is a $\l$-linear hypergraph on $m$ vertices with $n$ edges and total degree $\lfloor B^k_\l(m,n) \rfloor$. This completes the proof.
\end{proof}

We can now prove Theorem~\ref{T:boundAchieved} and Corollary~\ref{C:asymptotic}.\pagebreak

\begin{proof}[\textup{\textbf{Proof of Theorem~\ref{T:boundAchieved}.}}]
Let $k \geq 2$ be a fixed integer, let $\alpha$ and $\beta$ be as given in the statement of the theorem, and suppose that $m$ is large. We refer to the three intervals specified for $n$ in the statement as the lower, middle and upper intervals respectively. When $n$ is in the middle interval the result follows from Lemma~\ref{L:aBoundAchieved}. Now suppose that $n$ is in the upper interval and hence that $\l(m-1) \not\equiv 0 \mod{k-1}$ since otherwise the upper interval is empty. The result follows from Lemma~\ref{L:bBoundAchieved}, noting that $\l(m-1) \not\equiv 0 \mod{k-1}$ implies $k \geq 3$. Finally suppose that $n$ is in the lower interval and hence that $\l(m-1) \not\equiv 0 \mod{k}$ since otherwise the lower interval is empty. Let $\ell=k+1$, let $\alpha_\ell$ be the least nonnegative integer such that $\l(m-1)-\alpha_\ell \equiv 0 \mod{k}$, and note that $\alpha_\ell \neq 0$ since $\l(m-1) \not\equiv 0 \mod{k}$. Then, by Lemma~\ref{L:bBoundAchieved},  $Z_{2,\l+1}(m,n)=\lfloor B^\ell_\l(m,n) \rfloor$ for all positive integers $\l$ and $n$ such that
\[\l\tbinom{m}{2}/\tbinom{\ell}{2} \leq n \leq \left(\l\tbinom{m}{2}+m(\ell-1-\alpha_\ell)\right)/\tbinom{\ell}{2}-4(\ell+1)(\ell+2).\]
The result follows by first substituting $\ell=k+1$ and then observing that $k-\alpha_\ell=\beta$.
\end{proof}

\begin{proof}[\textup{\textbf{Proof of Corollary~\ref{C:asymptotic}.}}]
If $a > 1$, then $k=1$ and by the result of \cite{Cul}, using $n \sim a \l \binom{m}{2}$,
\[Z_{2,\l+1}(m,n)=\l\tbinom{m}{2}+n \sim \l(a+1)\tbinom{m}{2},\]
confirming the corollary. So we may assume that $a \leq 1$ and hence that $k \geq 2$ and $1/\binom{k+1}{2} < a \leq 1/\binom{k}{2}$ by the definition of $k$. If $1/\binom{k+1}{2} < a < 1/\binom{k}{2}$, then by Theorem~\ref{T:boundAchieved}, using $n \sim a \l \binom{m}{2}$,
\[Z_{2,\l+1}(m,n) \sim A^k_\l(m,n) \sim \tfrac{\l}{k}\left(a\tbinom{k+1}{2}+1\right)\tbinom{m}{2}.\]
So the corollary holds whenever $a \neq 1/\binom{k}{2}$. This implies that $\lim_{m \rightarrow \infty}Z_{2,\l+1}(m,n)/\binom{m}{2}$ approaches $\frac{2\l}{k-1}$ as $a$ approaches $1/\binom{k}{2}$ (remember that $k$ in the statement of the corollary must be replaced with $k-1$ in our present notation when $a$ approaches $1/\binom{k}{2}$ from above). Thus, because $Z_{2,\l+1}(m,n)$ is increasing in $n$, the corollary also holds when $a = 1/\binom{k}{2}$.
\end{proof}

\section{Failing to achieve the upper bounds}\label{S:failing}

In this section we prove Theorem~\ref{T:boundNotAchieved}. Recall that we have defined, for positive integers $k \geq 2$ and $\l$, a \emph{symmetric design with block size $k$ and index $\l$} to be a $k$-uniform hypergraph whose underlying multigraph is $\l K_h$, where $h=\frac{k(k-1)}{\l}+1$.

The hypothesis $\l\binom{m}{2}-n\binom{k}{2} \equiv 0 \mod{k}$ and the bounds on $n$ in Theorem~\ref{T:boundNotAchieved}(i) mean that any $\l$-linear hypergraph on $m$ vertices with $n$ edges and total degree $\lfloor A^k_\l(m,n) \rfloor$ would have to have a very small number of edges of size $k+1$ and all remaining edges of size $k$. When combined with the condition that $\l(m-1) \equiv 0 \mod{k-1}$ this implies that the edges of size $k+1$ would have to be arranged in a specific fashion that turns out to be impossible given that there are so few of them. The proof of parts (ii) and (iii) proceeds along similar lines to that of part (i) with the roles of edges of size $k$ and edges of size $k+1$ exchanged. In this case, when $n = \binom{m}{2}/\binom{k+1}{2}+\frac{2h}{k+1}$, the special arrangement of the edges of size $k$ that the hypotheses force is exactly a symmetric design with block size $k$ and index $\l$.

\begin{proof}[\textup{\textbf{Proof of Theorem~\ref{T:boundNotAchieved}.}}]
Let $k$, $\l$, $m$ and $n$ be positive integers such that $k \geq 2$, $\l\binom{m}{2} - n\binom{k}{2} \equiv 0 \mod{k}$ and $\l\binom{m}{2}/\binom{k+1}{2} < n <  \l\binom{m}{2}/\binom{k}{2}$. Let $h=\frac{k(k-1)}{\l}+1$. Observe that $\l\binom{m}{2} - n\binom{k}{2} \equiv 0 \mod{k}$ implies that $\lfloor A^k_\l(m,n) \rfloor =A^k_\l(m,n)$. Thus, by Lemmas~\ref{L:obeyLP} and \ref{L:aBoundWithEq}, $Z_{2,\l+1}(m,n)=\lfloor A^k_\l(m,n) \rfloor$ if and only if there is a $\l$-linear hypergraph $H$ with $n_k$ edges of size $k$ and $n_{k+1}$ edges of size $k+1$, where $n_k=\tfrac{k+1}{2}n-\tfrac{\l}{k}\tbinom{m}{2}$ and $n_{k+1}=\tfrac{\l}{k}\tbinom{m}{2}-\tfrac{k-1}{2}n$. Note that both $n_k$ and $n_{k+1}$ are positive by our assumed bounds on $n$. Suppose such a hypergraph $H$ exists and note that $H$ must have empty $\l$-defect by \eqref{E:edgeSumEasy} since $\l\binom{m}{2} - n\binom{k}{2} \equiv 0 \mod{k}$. We will show this supposition leads to a contradiction to the hypothesised bounds on $n$ in the cases given in (i) and (ii) and that it implies the existence of the appropriate symmetric design in the case given by (iii). Let $V=V(H)$. Let $x$ be an arbitrary vertex in $V$ and, for each $\ell \in \{k,k+1\}$, let $n_\ell(x)$ be the number of edges of $H$ of size $\ell$ that contain $x$. \smallskip

\noindent\textbf{(i).} Suppose that $\l(m-1) \equiv 0 \mod{k-1}$. Thus, since $(k-1)n_k(x)+kn_{k+1}(x)=\l(m-1)$, we have $n_{k+1}(x) \equiv 0 \mod{k-1}$. In particular, if $n_{k+1}(x) \neq 0$ then $n_{k+1}(x) \geq k-1$. So, because $n_{k+1} > 0$, there is a vertex $y$ of $H$ that is in at least $k-1$ edges of size $k+1$. Each of these edges includes $k$ pairs of vertices containing $y$, and each pair of vertices occurs together in at most $\l$ edges. It follows that there are at least $\frac{k(k-1)}{\l}+1=h$ vertices of $H$ that are each in at least $k-1$ edges of size $k+1$. Thus $n_{k+1} = \frac{1}{k+1}\sum_{x \in V} n_{k+1}(x) \geq \frac{k-1}{k+1}h$. So we have shown that $n_{k+1} \geq \max\{\frac{k-1}{k+1}h,k-1\}$ and hence, because $n_{k+1}=\tfrac{\l}{k}\tbinom{m}{2}-\tfrac{k-1}{2}n$, we have $n \leq \l\binom{m}{2}/\binom{k}{2}-\frac{2}{k+1}\max\{h,k+1\}$. This proves (i). \smallskip

\noindent\textbf{(ii) and (iii).} Suppose that $\l(m-1) \equiv 0 \mod{k}$. Thus, since $(k-1)n_k(x)+kn_{k+1}(x)=\l(m-1)$, we have $n_k(x) \equiv 0 \mod{k}$. In particular, if $n_k(x) \neq 0$ then $n_k(x) \geq k$. So, because $n_k > 0$, there is a vertex $y$ of $H$ that is in at least $k$ edges of size $k$. Each of these edges includes $k-1$ pairs of vertices containing $y$, and each pair of vertices occurs together in at most $\l$ edges. It follows that there are at least $\frac{k(k-1)}{\l}+1=h$ vertices of $H$ that are each in at least $k$ edges of size $k$. Thus $n_{k} = \frac{1}{k}\sum_{x \in V} n_k(x) \geq h$. So we have shown that $n_{k} \geq \max\{h,k\}$ and hence, because $n_k=\tfrac{k+1}{2}n-\tfrac{\l}{k}\tbinom{m}{2}$, we have $n \geq \l\binom{m}{2}/\binom{k+1}{2} + \frac{2}{k+1}\max\{h,k\}$ which proves (ii). Now further suppose that $n = \l\binom{m}{2}/\binom{k+1}{2} + \frac{2h}{k+1}$. Then $n_k=\tfrac{k+1}{2}n-\tfrac{\l}{k}\tbinom{m}{2}=h$. Thus, because $h= n_{k} = \frac{1}{k}\sum_{x \in V} n_k(x)$ and there are at least $h$ vertices of $H$ that are each in at least $k$ edges of size $k$, it must be the case that exactly $h$ vertices of $H$ are in exactly $k$ edges of size $k$ and that each other vertex of $H$ is in no edge of size $k$. In this case simple counting shows that the hypergraph $H_k$ with vertex set $\{x \in V: n_k(x) > 0\}$ and whose edges are exactly the edges of size $k$ of $H$ must be a symmetric design with block size $k$ and index $\l$.

Now suppose that $n=\l\binom{m}{2}/\binom{k+1}{2} + \frac{2h}{k+1}$ and a symmetric design with block size $k$ and index $\l$ exists. This symmetric design is a $k$-uniform hypergraph $H_k$ with $h=n_k$ edges whose underlying multigraph is $\l K_h$. Thus, to show that $Z_{2,\l+1}(m,n)= \lfloor A^k_\l(m,n) \rfloor$, it suffices to show there is a $(k+1)$-uniform hypergraph with $n_{k+1}$ edges whose underlying multigraph is obtained from $\l K_m$ by deleting the edges of a submultigraph isomorphic to $\l K_h$. Note that $\l\binom{m}{2}-\l\binom{h}{2}=n_{k+1}\binom{k+1}{2}$. Furthermore, we have $\l(m-1) \equiv 0 \mod{k}$ and hence also $\l(m-1)-\l(h-1) \equiv 0 \mod{k}$ using the definition of $h$. So, by Lemma~\ref{T:denseFDecompsArbLambda}, provided that $m$ is sufficiently large compared to $k$, the required $(k+1)$-uniform hypergraph does indeed exist and hence $Z_{2,\l+1}(m,n)= \lfloor A^k_\l(m,n) \rfloor$.
\end{proof}

\section{Conclusion}\label{S:conc}

We have concentrated here on investigating the value of $Z_{2,\l+1}(m,n)$ in cases where $m$ is large and $n=\Theta(\l m^2)$. However, the bounds of Theorem~\ref{T:upperBounds} apply, and may be useful, in a wider range of situations. For example, the new bounds improve the best known upper bounds for $Z_{2,2}(m,n)$ reported in \cite[Table 1]{DamHegSzo} in the following situations. In each case the upper bound given in \cite{DamHegSzo} is larger by 1 than the one given below.

\begin{table}[H]
\begin{center}
\begin{tabular}{c||c|c|c|c|c|c|c|c|c|c}
                            $m$ & 14 & 14 & 17 & 17 & 18 & 18 & 20 & 21 & 22 & 23  \\ \hline
                            $n$ & 29 & 30 & 24 & 25 & 24 & 25 & 31 & 31 & 25 & 25  \\ \hline
                            $k$ &  3 &  3 &  4 &  4 &  4 &  4 &  4 &  4 &  5 &  5  \\ \hline
$\lfloor B^k_1(m,n) \rfloor$    & 87 & 89 & 92 & 94 & 97 & 99 & 124& 129& 119& 124
\end{tabular}\caption{Situations in which Theorem~\ref{T:upperBounds} improves the bounds reported in \cite{DamHegSzo}.}
\end{center}
\end{table}

Theorem~\ref{T:boundNotAchieved}(iii) indicates that there are cases where determining $Z_{2,\l+1}(m,n)$ exactly is probably very difficult. However, there remains the possibility of determining $Z_{2,\l+1}(m,n)$ in some of the cases not covered by Theorem~\ref{T:boundAchieved} and providing strong lower bounds in the others. For some $k$, it may be possible to completely determine the values of $Z_{2,\l+1}(m,n)$ for all the values of $n$ near $\binom{m}{2}/\binom{k}{2}$. In particular, this seems achievable when $k=3$ and this is one subject of our current research.

\subsection*{Acknowledgements}

Some of this research was undertaken while the first author visited Monash University. This visit was supported by Henan Normal University. The second author was supported by the Australian Research Council (grants FT160100048 and DP220102212). He thanks Donald Knuth who in 2015 asked him a question about the value of $Z_{2,2}(m,\frac{1}{3}\binom{m}{2})$ for $m \equiv 0,4 \mod{6}$ that eventually led to this line of research (cf. \cite[Exercise 7.2.2.2--488]{Knu} and its solution). He also thanks Charles Colbourn and Stefan Glock for useful discussions.


\begin{thebibliography}{10}

\bibitem{AloRonSza}
N. Alon, L. R\'{o}nyai and T. Szab\'{o}, Norm-graphs: variations and applications,
\textit{J. Combin. Theory Ser. B} \textbf{76} (1999), 280--290.

\bibitem{AloMelMubVer}
N. Alon, K.E. Mellinger, D. Mubayi and J. Verstra\"{e}te, The de {B}ruijn-{E}rd\H{o}s theorem for hypergraphs,
\textit{Des. Codes Cryptogr.}
\textbf{65} (2012), 233--245.

\bibitem{CarYus}
Y.~Caro and R.~Yuster, List decomposition of graphs,
\textit{Discrete Math.}
\textbf{243} (2002), 67--77.

\bibitem{ColRiaWalRad}
A. Collins, A. Riasanovsky, J. Wallace and S. Radziszowski, Zarankiewicz Numbers and Bipartite Ramsey Numbers,
\textit{Journal of Algorithms and Computation} \textbf{47} (2016) 63--78.

\bibitem{Con}
D. Conlon, Some remarks on the Zarankiewicz problem,
\textit{Mathematical Proceedings of the Cambridge Philosophical Society} \textbf{173} (2021) 1--7 doi:10.1017/S0305004121000475.

\bibitem{Cul}
K. \v{C}ul\'{\i}k, Teilweise l\"{o}sung eines verallgemeinerten problems von {K}. {Z}arankiewicz (German),
\textit{Ann. Polon. Math.}
\textbf{3} (1956), 165--168.

\bibitem{DamHegSzo}
G. Dam\'{a}sdi, T. H\'{e}ger and T. Sz\H{o}nyi, The {Z}arankiewicz problem, cages, and geometries,
\textit{Ann. Univ. Sci. Budapest. E\"{o}tv\"{o}s Sect. Math.}
\textbf{56}, (2013), 3--37.

\bibitem{GloKuhLoMonOst}
    S. Glock, D. K\"{u}hn, A. Lo, R. Montgomery and D. Osthus,
    On the decomposition threshold of a given graph,
    {\it J. Combin. Theory Ser. B} {\bf 139} (2019), 47--127.

\bibitem{GodHenOel}
W. Goddard, M. Henning and O. Oellermann, Bipartite Ramsey numbers and Zarankiewicz numbers,
\textit{Discrete Math.}
\textbf{219} (2000), 85--95.

\bibitem{Guy}
R.K. Guy, A many-facetted problem of Zarankiewicz, in: The many facets of graph theory, Springer (1969), 129--148.

\bibitem{Hyl}
C. Hylt\'{e}n-Cavallius, On a combinatorical problem,
\textit{Colloq. Math.}
\textbf{6}, (1958), 59--65.

\bibitem{KanTok}
M. Kano and N. Tokushige, Binding numbers and $f$-factors of graphs,
\textit{J. Combin. Theory Ser. B}
\textbf{54} (1992), 213--221.

\bibitem{Kee}
P. Keevash, The existence of designs,
arXiv:1401.3665 (2019).

\bibitem{Knu}
D.E. Knuth, The Art of Computer Programming, Volume 4, Fascicle 6: Satisfiability, Addison-Wesley Professional (2015).

\bibitem{KovSosTur}
T. K\"{o}vari, V. S\'{o}s and P. Tur\'{a}n, On a problem of {K}. {Z}arankiewicz,
\textit{Colloq. Math.}
\textbf{3}, (1954), 50--57.

\bibitem{Mor}
M. M\"{o}rs, A new result on the problem of Zarankiewicz,
\textit{J. Combin. Theory Ser. A} \textbf{31} (1981), 126--130.

\bibitem{Rei}
I. Reiman, \"{U}ber ein {P}roblem von {K}. {Z}arankiewicz (German),
\textit{Acta Math. Acad. Sci. Hungar.}
\textbf{9} (1958), 269--273.

\bibitem{Rei2}
I. Reiman, Su una proprietà dei 2-disegni (Italian),
\textit{Rend. Mat.}
\textbf{1} (1968), 75--81.

\bibitem{Rom}
S.~Roman, A problem of Zarankiewicz,
\textit{J. Combinatorial Theory Ser. A}
\textbf{18} (1975), 187--198.


\bibitem{Wes}
D.B.~West, Introduction to Graph Theory (2nd Edition), Prentice-Hall (2001).

\bibitem{Zar}
K. Zarankiewicz, Problem p101,
\textit{Colloq. Math.}, \textbf{2} (1951), 301.


\end{thebibliography}
\end{document}